\numberwithin{equation}{section}
\tikzset{
    %Define standard arrow tip
    >=stealth',
    %Define style for boxes
    punkt/.style={
           rectangle,
           rounded corners,
           draw=black, very thick,
           text width=6.5em,
           minimum height=2em,
           text centered},
    % Define arrow style
    pil/.style={
           ->,
           thick,
           shorten <=2pt,
           shorten >=2pt,}
}
\newtheorem*{theorem*}{\textbf{Theorem}}
\newtheorem{theorem}{\textbf{Theorem}}[section]
\newtheorem{definition}[theorem]{\textbf{Definition}}
\newtheorem{proposition}[theorem]{\textbf{Proposition}}
\newtheorem{lemma}[theorem]{\textbf{Lemma}}
\newtheorem*{claim*}{\textbf{Claim}}
\newtheorem{corollary}[theorem]{\textbf{Corollary}}
\newtheorem{remark}[theorem]{\textbf{Remark}}
\newtheorem{example}[theorem]{\textbf{Example}}
\newtheorem{definition/proposition}[theorem]{\textbf{Definition/Proposition}}
\def\N{{\mathbb N}}
\def\R{\mathbb{R}}
\def\Z{{\mathbb Z}}
\def\C{{\mathbb C}}
\def\D{{\mathbb D}}
\def\Q{{\mathbb Q}}
\newcommand{\CP}{\mathbb{C}\mathbb{P}}
\def\cM{{\mathcal M}}
\def\cO{{\mathcal O}}
\def\cT{{\mathcal T}}
\def\bH{{\bm H}}
\def\bk{{\bm k}}
\def\rd{{\rm d}}
\DeclareMathOperator{\Ima}{im}
\DeclareMathOperator{\Id}{id}
\DeclareMathOperator{\Aug}{Aug}
\DeclareMathOperator{\RSFT}{RSFT}
\DeclareMathOperator{\SFT}{SFT}
\DeclareMathOperator{\Pl}{P}
\DeclareMathOperator{\PT}{APT}
\DeclareMathOperator{\TT}{T}
\DeclareMathOperator{\CHA}{CHA}
\def\cont{{\mathfrak{Con}}}
\def\mc{\mathfrak{mc}}
\def\OB{\mathrm{OB}}
\def\BO{\mathrm{BO}}
\newcommand{\Addresses}{{% additional braces for segregating \footnotesize
		\bigskip
		\footnotesize
		
	     Agustin Moreno, \par\nopagebreak
        \textsc{Institute for Advanced Study, Princeton, U.S. / Heidelberg Universität, Germany}\par\nopagebreak
         \textit{E-mail address:} \href{mailto:agustin.moreno2191@gmail.com}{agustin.moreno2191@gmail.com}
		
		\medskip
		
	     Zhengyi Zhou, \par\nopagebreak
        \textsc{State Key Laboratory of Mathematical Sciences, Chinese Academy of Sciences;}\par\nopagebreak
	    \textsc{Morningside Center of Mathematics, Chinese Academy of Sciences;}\par\nopagebreak
         \textsc{Academy of Mathematics and Systems Science, Chinese Academy of Sciences, China}\par\nopagebreak
		\textit{E-mail address}: \href{mailto:zhengyizhounju@gmail.com}{zhengyizhounju@gmail.com}

}}
\date{}
\title{RSFT functors for strong cobordisms and applications}
\author{Agustin Moreno, Zhengyi Zhou}
\begin{document}
	\maketitle
\begin{abstract}
We extend the hierarchy functors of \cite{MZ22} to the case of strong symplectic cobordisms, via deformations with Maurer--Cartan elements. In particular, we prove that the concave boundary of a strong cobordism has finite algebraic planar torsion if the convex boundary does, which yields a functorial proof of finite algebraic planar torsion for contact manifolds admitting strong cobordisms to overtwisted contact manifolds. We also show the existence of contact $3$-folds without strong cobordisms to the standard contact $3$-sphere, that are not cofillable. We also include generalizations of the theory relating our notion of algebraic planar torsion to Latschev--Wendl's notion of algebraic torsion \cite{LW}, discussing variations from counting holomorphic curves with general constraints and invariants extracted from higher genera holomorphic curves from an algebraic perspective. 
\end{abstract}

%\tableofcontents

\section{Introduction}

In \cite{MZ22}, the authors introduced the concept of $BL_\infty$ algebras, a framework underlying the algebraic aspects of rational symplectic field theory (RSFT). This setup was used to introduce several invariants for contact manifolds, including the notion of \emph{planarity} (roughly speaking, from rational curves with a point constraint in symplectizations), as well as \emph{algebraic planar torsion} (roughly speaking, from rational curves without negative punctures in symplectizations), designed to provide a notion of complexity within the category of all contact manifolds, together with \emph{Liouville/exact} cobordisms between them as morphisms. Several properties and examples were discussed, and in particular, every level of algebraic planar torsion and planarity was shown to be achieved by a contact manifold. In this paper, we consider the $BL_\infty$ formalism in the \emph{strong} symplectic cobordism category and study the functorial properties of algebraic planar torsion and planarity under strong cobordisms. We also discuss several variations, including the higher genus theory and the theory with general constraints from an algebraic perspective.

\subsection{Hierarchy functors for strong cobordisms}
In \cite{MZ22}, the authors introduced the notion of algebraic planar torsion $\PT$, which is an analog, in the context of RSFT, of Latschev and Wendl's algebraic torsion in the context of the full SFT. The finiteness of $\PT$ is an obstruction to strong fillings, and $\PT$ of the convex boundary is at least $\PT$ of the concave boundary in an exact cobordism. By \cite[Theorem 3.16, 3.17]{MZ22} and \cite[Theorem 1]{MR3128981}, such monotonicity does not hold for strong cobordisms. However, the following theorem implies that the \emph{finiteness} of $\PT$ is preserved under a strong cobordism. This answers a question of Latschev and Wendl \cite[Question 2]{LW} affirmatively in the context of RSFT. The original question can be solved similarly.

\begin{theorem}\label{thm:strong_torsion}
If there is a strong cobordism from $Y_-$ (concave end) to $Y_+$ (convex end) such that $\PT(Y_+)<+\infty$, then $\PT(Y_-)<+\infty$. In particular, if $Y_+$ is algebraically overtwisted, i.e.\ the contact homology of $Y_+$ vanishes, then $Y_-$ has finite algebraic planar torsion.
\end{theorem}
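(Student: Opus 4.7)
The proof will be a direct application of the strong-cobordism extension of the hierarchy functors developed in this paper. The plan is the following. Given a strong cobordism $X$ from $Y$ to $Y'$, one obtains a $BL_\infty$ morphism
\[
\Phi_X \colon V_Y \longrightarrow V_{Y'}^{\mathfrak{mc}},
\]
where $V_{Y'}^{\mathfrak{mc}}$ denotes the $BL_\infty$ algebra of $Y'$ deformed by a Maurer--Cartan element $\mathfrak{mc}$ encoding the defect between $\omega$ and a global primitive on the cobordism (equivalently, the contribution of auxiliary curves in $X$ that is not present in the exact case). This is precisely the construction advertised in the abstract and I would use it as a black box.

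Next, I would translate the hypothesis $\PT(Y') < +\infty$ into its algebraic form: the existence of a chain $\beta' \in V_{Y'}$ at some word-length filtration level $k'$ which witnesses the planar torsion class as an algebraic boundary. I would then verify that this property survives the Maurer--Cartan twist, i.e.\ that $V_{Y'}^{\mathfrak{mc}}$ also has finite planar torsion. Concretely, the $\mathfrak{mc}$-twisted operations differ from the original ones by terms of strictly positive multiplicity in $\mathfrak{mc}$, so the defining cocycle equation for planar torsion can still be solved, possibly at a larger filtration level obtained by combining $k'$ with the filtration of $\mathfrak{mc}$. Pulling this twisted witness back along $\Phi_X$ then produces a chain in $V_Y$ realizing $\PT(Y) < +\infty$. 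The algebraically overtwisted case falls out immediately, since vanishing of contact homology of $Y'$ forces the planar torsion witness for $Y'$ to exist at the lowest possible level, and the argument above produces a (possibly higher, but still finite) bound on $\PT(Y)$.

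The main obstacle is the second step, namely checking that the Maurer--Cartan twist does not destroy finiteness of planar torsion. In the exact case, the cobordism functor preserves the relevant word-length filtration and yields the clean monotonicity $\PT(Y) \leq \PT(Y')$; for strong cobordisms this inequality fails, by \cite[Theorems 3.16, 3.17]{MZ22} combined with \cite[Theorem 1]{MR3128981}, so only \emph{finiteness}, and not the exact level, can be expected to be preserved. Handling this requires a careful filtration analysis of how $\mathfrak{mc}$ enters the twisted $BL_\infty$ operations, and a verification that a bounding chain for the planar torsion class in $V_{Y'}$ can always be corrected, order by order in $\mathfrak{mc}$, to a bounding chain for the twisted operations; the process terminates after finitely many steps at the chain level because only finitely many positive punctures contribute to each structure map, which is the key algebraic input coming from the $BL_\infty$ framework. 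Once this is established, the same argument applies verbatim in Latschev--Wendl's setting to recover the affirmative answer to \cite[Question 2]{LW}.
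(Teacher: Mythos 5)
There are two genuine problems with your plan, and together they mean the argument does not close. First, the direction of the functor is reversed. A strong cobordism $X$ from $Y$ (concave) to $Y'$ (convex) induces a $BL_\infty$ morphism $\widehat{\phi}\colon \RSFT(Y')\to \RSFT(Y)_{\mathfrak{mc}}$, where the Maurer--Cartan element $\mathfrak{mc}$ lives in the algebra of the \emph{concave} end $Y$ (it counts caps with only negative punctures in $X$); you have written $\Phi_X\colon V_Y\to V_{Y'}^{\mathfrak{mc}}$, twisting the convex end and mapping out of $V_Y$. This is not a cosmetic issue: torsion witnesses are transported by \emph{pushing forward} (apply $\widehat{\phi}$ to $v$ with $\widehat{p}(v)=1$ and use $\widehat{\phi}(1)=1$), and functoriality only gives $\TT(\mathrm{source})\ge\TT(\mathrm{target})$. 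With your orientation, finiteness of torsion of the target $V_{Y'}^{\mathfrak{mc}}$ says nothing about $V_Y$, and "pulling back" a witness along a $BL_\infty$ morphism is not an available operation. In the correct orientation, the easy step is that the \emph{deformed} torsion of $\RSFT(Y)_{\mathfrak{mc}}$ is at most $\PT(Y')$, since $\widehat{\phi}$ preserves word length; the hard step, which your proposal does not address, is getting from the deformed algebra of $Y$ back to the \emph{undeformed} $\PT(Y)$.

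Second, your proposed mechanism for that hard step is not the one that works. The comparison between the deformed and undeformed complexes is the chain map $\exp_{\mathfrak{mc}}\colon (\overline{EV\otimes_{\Q}\Lambda},\widehat{p}_{\mathfrak{mc}})\to(\overline{EV\otimes_{\Q}\Lambda},\widehat{p})$ of \Cref{prop:MC_r}, and the whole difficulty is that $\exp_{\mathfrak{mc}}$ does \emph{not} preserve the sentence-length filtration (because $e^{\mathfrak{mc}}$ has terms of arbitrarily large word length); this is exactly why the torsion level can jump and only finiteness survives. Your claim that an order-by-order correction in $\mathfrak{mc}$ "terminates after finitely many steps because only finitely many positive punctures contribute to each structure map" does not supply a bound: $\mathfrak{mc}$ is only defined in the Novikov completion and contributes infinitely many terms. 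The actual argument is an action/weight truncation (\Cref{rmk:weight}): the witness $v\in E^kV_{\alpha'}\otimes_{\Q}\Lambda$ with $\widehat{p}'(v)=1$ has weight $0$, the composite $\exp_{\mathfrak{mc}}\circ\widehat{\phi}$ preserves weight, $\widehat{p}\bigl(\exp_{\mathfrak{mc}}\circ\widehat{\phi}(v)\bigr)=1\odot e^{\mathfrak{mc}}$ whose weight-zero part is $1$, and any pure-weight element of $\overline{EV\otimes_{\Q}\Lambda}$ lies in a finite word-length piece because Reeb periods are positive and discrete. Taking the weight-zero part $v_0$ of $\exp_{\mathfrak{mc}}\circ\widehat{\phi}(v)$ gives $\widehat{p}(v_0)=1$ with $v_0\in E^mV_{\alpha}\otimes_{\Q}\Lambda$ for some finite $m$. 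Without this weight-truncation step (or an equivalent), your proof has a genuine gap.
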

There are many examples of tight contact manifolds that admit strong cobordisms to overtwisted contact manifolds, e.g.\ any closed contact $3$-manifold with planar torsion \cite{MR3128981}. \Cref{thm:strong_torsion} yields a new proof that contact manifolds with planar torsion have finite algebraic planar torsion. Strong cobordisms induce $BL_\infty$ morphisms from the RSFT of the convex boundary $Y_+$ to the RSFT deformed by a Maurer--Cartan element of the concave boundary $Y_-$. The Maurer--Cartan element is defined by counting rigid genus $0$ holomorphic caps with multiple negative punctures in the strong cobordism. The deformed functoriality implies that the deformed RSFT of $Y_-$ has $\PT$ no larger than $\PT(Y_+)$, and then we can deduce that the undeformed $\PT(Y_-)$ is also finite, but it could be larger than $\PT(Y_+)$. Maurer-Cartan deformations have been studied in various symplectic invariants, starting from bounding cochains in Lagrangian Floer theory by Fukaya, Oh, Ohta, and Ono \cite{zbMATH05616008}, Maurer-Cartan deformations of $BV_\infty$ algebras in the full SFT by Cieliebak and Latschev \cite{cieliebak2009role}, Maurer-Cartan deformations of $IBL_\infty$ algebras in the full SFT by Cieliebak, Fukaya and Latschev \cite{cieliebak2015homological}, and Maurer-Cartan deformations of $L_\infty$ algebras in symplectic cohomology by Borman, Sheridan, and Varolgunes \cite{zbMATH07557488}.

Another hierarchy functor introduced in \cite{MZ22} is the planarity $\Pl$, which measures the tightness of, typically, fillable contact manifolds in the exact cobordism category. Similar to $\PT$, the finiteness of $\Pl_{\Lambda}$ (the planarity using augmentations over the Novikov field $\Lambda$) of a contact manifold $Y$ is an obstruction to strong semi-fillings, i.e.\ connected strong fillings $W$ of a disjoint union $Y\sqcup Y'$, where both $Y,Y'\neq \emptyset$. Understanding obstructions to semi-fillings can be useful for obstructions to fillings, see e.g.\ \cite{MR1383953}. We say that $Y$ is \emph{strongly co-fillable} if there is a strong semi-filling of $Y$.
\begin{theorem}\label{thm:cofilling}
    If $\Pl_{\Lambda}(Y)<+\infty$, then $Y$ is not strongly co-fillable.
\end{theorem}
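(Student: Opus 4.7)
The plan is to follow the strategy of Theorem \ref{thm:strong_torsion}, specialized to the case of a strong cobordism with empty concave boundary. Suppose, for contradiction, that $Y$ is strongly co-fillable, so that there is a connected strong filling $W$ of $Y \sqcup Y'$ for some closed contact manifold $Y'$.

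I would view $W$ as a strong cobordism from $\emptyset$ (as concave boundary) to $Y \sqcup Y'$ (as convex boundary). The strong-cobordism extension of the $BL_\infty$ functor, developed in the body of the paper and already invoked in the proof of Theorem \ref{thm:strong_torsion}, produces from $W$ a $BL_\infty$ morphism from the RSFT of $Y\sqcup Y'$ into the Maurer--Cartan deformation of the RSFT of the concave boundary. Since that boundary is empty, the target reduces to the Novikov field $\Lambda$ and the Maurer--Cartan element is trivial, so the morphism is precisely an augmentation of the RSFT of $Y \sqcup Y'$ over $\Lambda$, with the Novikov variable recording the symplectic areas of the rigid rational curves in $W$ being counted.

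Since holomorphic curves in the symplectization of $Y\sqcup Y'$ asymptotic only to Reeb orbits on $Y$ coincide with curves in the symplectization of $Y$, there is a natural inclusion of $BL_\infty$ algebras from the RSFT of $Y$ into that of $Y\sqcup Y'$, and composition with the augmentation from the previous step yields an augmentation of the RSFT of $Y$ over $\Lambda$. On the other hand, the hypothesis $\Pl_\Lambda(Y) < + \infty$ is designed to obstruct the existence of such an augmentation, in direct analogy with the way $\PT(Y) < + \infty$ obstructs strong fillings (and mirroring the Latschev--Wendl-type obstruction made functorial by Theorem \ref{thm:strong_torsion}). Chasing the obstruction element supplied by finite planarity through the restricted augmentation yields the desired contradiction.

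The main obstacle is making the first step rigorous in the strong (rather than exact) setting, where the absence of a primitive forces one to control broken configurations of holomorphic caps via the Maurer--Cartan deformation machinery developed earlier in the paper. Here that difficulty is mitigated by the concave boundary being empty, so the Maurer--Cartan element itself vanishes identically; what remains is the convergence of the counts defining the augmentation, which is exactly what the Novikov field $\Lambda$ is designed to ensure, and which is why the hypothesis is phrased in terms of $\Pl_\Lambda$ rather than $\Pl$.
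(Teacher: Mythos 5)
There is a genuine gap at the final step. Your first two steps are essentially fine (and in fact simpler than you make them: the statement that a strong semi-filling of $Y$ induces a $\Lambda$-augmentation of $\RSFT_{\Lambda}(Y)$ is exactly item \eqref{BL:6} of \Cref{thm:vfc}, with no Maurer--Cartan deformation needed since the concave boundary is empty). But the claim that ``the hypothesis $\Pl_\Lambda(Y)<+\infty$ is designed to obstruct the existence of such an augmentation'' is false, and the analogy with $\PT$ breaks down precisely here. Finiteness of \emph{algebraic planar torsion} obstructs augmentations; finiteness of \emph{planarity} does not. Planarity is defined (\Cref{def:planarity}) as a maximum of orders $O(V_\alpha,\epsilon,p_\bullet)$ taken \emph{over} the set of augmentations, so it is an invariant designed precisely for manifolds that do admit augmentations. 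For example $(S^{2n-1},\xi_{\mathrm{std}})$ has planarity $1$ and is strongly fillable, so your argument as written would prove a false statement.

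The missing idea is the point constraint, i.e.\ Eliashberg's filling-by-holomorphic-disks mechanism. Given the semi-filling $W$ of $Y\sqcup Y'$ and a point $o\in Y$ with associated pointed map $p_\bullet$, one takes a ray $\gamma$ from $o$ escaping to infinity through the cylindrical end over the \emph{other} boundary component $Y'$. Counting curves in $\widehat W$ with positive punctures on $Y$ and a marked point on $\widehat\gamma$ produces a map $\epsilon_\bullet$ exhibiting $(\RSFT(Y),p,p_\bullet)$, $(\mathbf{0},0,0)$ and $\epsilon$ as compatible in the sense of \Cref{def:compatible} (the maximum principle guarantees compactness even though $\gamma$ is non-compact, since no curve asymptotic only to orbits on $Y$ can reach the far end of $\gamma$ in $\R_+\times Y'$). \Cref{prop:order} then gives $O(\RSFT(Y),\epsilon,p_\bullet)\ge O(\mathbf{0},0,0)=+\infty$, which contradicts $\Pl_\Lambda(Y)<+\infty$. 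Without introducing $p_\bullet$, $\epsilon_\bullet$, and this functoriality of the order, the existence of the restricted augmentation alone yields no contradiction.
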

There are many non-co-fillable contact manifolds, e.g.\ $(S^{2n-1},\xi_{\mathrm{std}})$ \cite{MR1091622}, or more generally planar contact manifolds \cite{MR2126827}, and iterated planar contact manifolds \cite{MR4391886}. All of them have finite planarity \cite{MZ22}. Indeed, the proof of \Cref{thm:cofilling} is essentially an RSFT packaging of Eliashberg's argument for the simply connected condition for fillings of $(S^{2n-1},\xi_{\mathrm{std}})$, which forms part of the proof of the celebrated Eliashberg--Floer--McDuff theorem \cite{MR1091622}.

One would expect that the finiteness of planarity is also preserved under strong cobordisms. The functorial properties of Maurer-Cartan deformed structures in the definition of planarity do hold. But we will face two problems: one is extracting finite planarity from a completed algebra, as Maurer-Cartan deformed theory has to be defined on the completion of the $BL_\infty$ algebra with respect to a suitable filtration to reflect the Gromov compactness for holomorphic curves in strong cobordisms; the other is the possibility of having the planarity blowing up when we enumerate all possible augmentations, i.e.\ the upper bound of planarity of the concave boundary from the deformed functoriality depends on the choice of augmentations and eventually goes to infinity when we test through all augmentations. Hence, we can only prove the functoriality of the finiteness of planarity if the convex boundary's finite planarity is contributed by curves that do \emph{not} depend on augmentations. This is, in fact, the situation for all examples of finite planarity considered in \cite{MZ22}.  

\begin{theorem}\label{thm:planarity_strong}
    Let $Y_+$ be a contact manifold with finite planarity, where the contributing curves do not depend on augmentations in the sense of \Cref{prop:simple} (holds for all explicit examples in \cite{MZ22}), and let $X$ be a strong cobordism from $Y_-$ to $Y_+$. Then $\Pl(Y_-)\le \Pl_{\Lambda}(Y_-)<+\infty$.
\end{theorem}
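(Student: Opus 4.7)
The plan is to carry out the same Maurer--Cartan deformation construction as in the proof of Theorem \ref{thm:strong_torsion}, but to track the additional planarity data. Given a strong cobordism $X$ from $Y$ to $Y'$, counting rigid genus zero holomorphic caps in $X$ with only negative punctures produces a Maurer--Cartan element $\mc$ on the $BL_\infty$ algebra $\RSFT(Y)$. The cobordism $X$ itself induces a $BL_\infty$ morphism $\Phi \colon \RSFT(Y') \to \RSFT(Y)_{\mc}$ from the RSFT of the convex boundary to the Maurer--Cartan deformation of the RSFT of the concave boundary. This is exactly the deformed functoriality used in Theorem \ref{thm:strong_torsion}.

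For each augmentation $\varepsilon'$ of $\RSFT(Y')$ over the Novikov field $\Lambda$, pullback through $\Phi$ together with the twist by $\mc$ yields an augmentation $\varepsilon$ of $\RSFT(Y)$ over $\Lambda$. For these fixed paired augmentations, the standard $BL_\infty$ functoriality respects the planarity filtration, and hence gives the pointwise bound $\Pl(Y,\varepsilon) \le \Pl(Y',\varepsilon')$. The obstacle to concluding directly is that taking the infimum over $\varepsilon'$ on the right is not the same as the infimum defining $\Pl_\Lambda(Y)$: the map $\varepsilon' \mapsto \varepsilon$ is neither surjective nor guaranteed to preserve infima, so in principle $\Pl(Y',\varepsilon')$ could blow up along the sequence of $\varepsilon'$ that realize $\Pl(Y')$.

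The hypothesis encoded in \Cref{prop:simple} is exactly what resolves this obstacle: the finite planarity of $Y'$ is certified by a particular collection of holomorphic curves in the symplectization of $Y'$ whose contribution to the planarity bound is uniform in the augmentation, giving some $N$ with $\Pl(Y',\varepsilon') \le N$ for \emph{all} $\varepsilon'$. Applying $\Phi$ and concatenating with the Maurer--Cartan trees of caps translates these uniform witnesses curve-by-curve into planarity witnesses on $Y$, yielding $\Pl(Y,\varepsilon) \le N$ for each of the pulled-back augmentations $\varepsilon$. Taking the infimum over $\varepsilon$ (or just exhibiting one such $\varepsilon$) produces $\Pl_\Lambda(Y) \le N < +\infty$, and the inequality $\Pl(Y) \le \Pl_\Lambda(Y)$ is immediate from the definitions.

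The main technical obstacle will be the last step: verifying that the ``augmentation-independent witness'' property of \Cref{prop:simple} is indeed preserved under composition with $\Phi$ and the Maurer--Cartan twist. Concretely, one must check that the distinguished planarity-witnessing element $\Phi_*(C')$ on the $Y$-side, when evaluated against any pulled-back augmentation $\varepsilon$, retains the nonvanishing/filtration-exiting property that made $C'$ a witness on the $Y'$-side. This should follow from the explicit structure of the witnesses in the examples of \cite{MZ22} (curves with a point constraint in symplectizations of planar-type manifolds), but requires carefully tracking the $BL_\infty$ operations and the Maurer--Cartan tree contributions to ensure no cancellation can occur across different choices of $\varepsilon'$.
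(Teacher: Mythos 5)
Your overall strategy---deform by the Maurer--Cartan element of the cobordism and exploit the augmentation-independence of the witnesses on $Y'$---is in the right family, but the proposal has the functorial bookkeeping backwards and, more importantly, omits the two mechanisms that actually produce finiteness. On the bookkeeping: $\Phi$ maps $\RSFT(Y')$ to the deformed algebra $\RSFT(Y)_{\mathfrak{mc}}$, so augmentations pull back from $Y$ to $Y'$ via $\epsilon\mapsto\epsilon\circ\Phi$; there is no way to ``push forward'' an augmentation of the convex end to the concave end as you assert. Moreover, planarity is a \emph{maximum} over augmentations (\Cref{def:planarity}), not an infimum, so to bound $\Pl_{\Lambda}(Y)$ you must bound the order for \emph{every} augmentation $\epsilon$ of $\RSFT_{\Lambda}(Y)$; ``exhibiting one such $\varepsilon$'' proves nothing. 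The correct shape is: for each $\epsilon$ of $Y$, produce a witness in $\overline{S}V_{\alpha_-}$ of uniformly bounded word length.

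The essential gap is that you give no reason why the element obtained by pushing $q_{\gamma_1}\cdots q_{\gamma_k}$ through the cobordism has \emph{finite} word length, nor why it is $\widehat{\ell}_{\epsilon}$-closed. The cobordism operators and $\mathfrak{mc}$ live only in the Novikov completion and can have arbitrarily many negative punctures, so a priori the output has unbounded word length and nothing finite transfers. The paper's proof extracts the \emph{weight-zero} part of the cobordism level (\Cref{rmk:weight}): nonnegativity of the $\omega$-energy forces the total $\alpha_-$-period of the outputs to be at most $\sum_i\int\gamma_i^*\alpha_+$, bounding the word length by the period ratio $N$ --- which is in general strictly larger than $\Pl(Y')$ (cf.\ \Cref{cor:divsor}, where the convex end has planarity $1$ but the bound is $d+1$), so your claim that the same bound $N$ transfers to $Y$ is quantitatively false. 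Finally, closedness of this weight-truncated element does \emph{not} follow from the algebraic relation $\widehat{\ell}_{\epsilon'}(x)=0$ on the $Y'$ side, because the weight-zero truncation does not commute with the quadratic relations when $X$ is non-exact and the augmentation changes weight (see the remark following \Cref{prop:simple}); one needs the geometric vanishing conditions \eqref{p1} and \eqref{p2} of \Cref{prop:simple} so that all top-level breakings vanish identically rather than merely cancel. These are exactly the points your closing ``main technical obstacle'' paragraph gestures at without resolving.
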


Following \cite[Lemma 4.3]{Seidel_biased}, affine varieties are equipped with natural Liouville/Weinstein structures, whose ideal boundary is equipped with natural contact structures. As a corollary of the proof of \Cref{thm:planarity_strong}, but based on different geometric inputs, we have the following explicit upper bounds, taken from \cite{MZ22}. 

\begin{theorem}\label{cor:divsor}
    Let $s$ be a holomorphic section of $\cO(d)$ for $d\ge 1$ over $\CP^n$. Let $Y$ be the (ideal) contact boundary of the affine variety $\CP^n \backslash (s^{-1}(0)\cup H)$, where $H$ is a hyperplane ($s^{-1}(0)\cup H$ is an ample divisor representing $\cO(d+1)$\footnote{Alternatively, $\CP^n \backslash (s^{-1}(0)\cup H)=\C^n\backslash V_f$, where $V_f$ is the zero set of a polynomial of degree at most $d$. Then $\C^n_{z_1,\ldots,z_n}\backslash V_f=V_{z_0f}\subset\C^{n+1}_{z_0,z_1,\ldots,z_n}$.}). Then $\Pl(Y)\le \Pl_{\Lambda}(Y)\le d+1$. 
\end{theorem}

Following \cite[Lemma 7.1]{MZ22}, it suffices to prove \Cref{cor:divsor} for a generic $s$, such that $s$ is transversal to zero and $H$ intersects $s^{-1}(0)$ transversely.

\begin{remark}
    Let $D_1,D_2$ be two ample divisors of $\CP^n$. Naively, one may expect that the planarity of $\partial(\CP^n\backslash D_1)$ is not larger than that of $\partial(\CP^n\backslash (D_1\cup D_2))$, as many computations in \cite{MZ22} indicated. Combining \Cref{cor:divsor} and \cite[Theorem 7.13]{MZ22}, we know that such an expectation does not hold in general, e.g.\ for $D_1$ a smooth hypersurface of sufficiently high degree and $D_2=H$ in $\CP^n$ for $n\ge 2$. 
\end{remark}

Lastly, we recall that, given an open book supporting a contact structure on $M$, Bourgeois \cite{Bourgeois} constructed a contact structure on $M\times \mathbb T^2$. Several applications of this construction were obtained in \cite{BGM,BGMZ}. For these manifolds, we can estimate their complexity in general.

\begin{theorem}\label{cor:BO}
The planarity (over $\Q$ or $\Lambda$) of any Bourgeois contact manifold with trivial first Chern class is at most $2$. 
\end{theorem}

We also make the following observation that there exist contact $3$-manifolds without strong cobordism to $(S^3,\xi_{\mathrm{std}})$ that are not cofillable, which settles a question of Wendl \cite[Question 2]{MR3128981} in the negative.

\begin{theorem}\label{thm:no_cofilling}
For $g\ge 1$ and $d>2g-2$, let $Y_{d,g}$ be the pre-quantization bundle, equipped with the Boothby-Wang contact structure, over a surface $\Sigma_g$ of genus $g$ with first Chern class $-d$. Then $Y_{d,g}$ is not strongly co-fillable and there is no strong cobordism with concave boundary $Y_{d,g}$ and convex boundary $(S^3,\xi_{\mathrm{std}})$. 
\end{theorem}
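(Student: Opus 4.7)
The plan is to prove the two assertions separately: the first via Theorem \ref{thm:cofilling} after establishing $\Pl_{\Lambda}(Y_{d,g})<+\infty$, and the second by a gluing argument that reduces to the Eliashberg--Floer--McDuff classification of fillings of $(S^3,\xi_{\mathrm{std}})$. For the first assertion, the strong filling $W:=D(\cO(-d))$ of $Y_{d,g}$ contains the zero section $\Sigma_g$ as a closed symplectic surface of self-intersection $-d$; an index/adjunction computation using $d>2g-2$ will show that moduli spaces of rigid genus-$0$ $J$-holomorphic curves in $W$ with a single positive puncture asymptotic to a short Reeb orbit in a fiber, together with a point constraint on $\Sigma_g$, have the correct virtual dimension and a nonzero count. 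Such a curve witnesses $\Pl_{\Lambda}(Y_{d,g})<+\infty$, following the strategy developed in \cite{MZ22} for explicit examples, after which Theorem \ref{thm:cofilling} yields the non-cofillability.

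For the second assertion, suppose for contradiction that $X$ is a strong cobordism with $Y_{d,g}$ concave and $(S^3,\xi_{\mathrm{std}})$ convex. Since $Y_{d,g}$ is convex in $W$ and concave in $X$, the Liouville vector fields agree in direction across the identified boundary, so the symplectic gluing produces a smooth strong symplectic filling $V:=W\cup_{Y_{d,g}} X$ of $(S^3,\xi_{\mathrm{std}})$. By the Eliashberg--Floer--McDuff theorem, $V$ is diffeomorphic to a symplectic blowup $B^4 \# k\overline{\CP^2}$, whose $H_2(V;\Z)$ has a basis of exceptional $(-1)$-sphere classes $E_1,\ldots,E_k$ satisfying $E_i\cdot E_j=-\delta_{ij}$ and $c_1(V)\cdot E_i = 1$. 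Writing $[\Sigma_g]=\sum_i a_i [E_i]$ in this basis and applying the symplectic adjunction formula yields the two integer constraints $\sum_i a_i^2 = d$ and $\sum_i a_i = -(d+2g-2)$.

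However, for any integer tuple $(a_i)$ one has $\sum_i a_i^2 \ge \sum_i |a_i| \ge |\sum_i a_i|$, so the constraints force $d \ge d+2g-2$, impossible for $g\ge 2$. For $g=1$, the required equalities force every $a_i$ into $\{-1,0\}$, making $[\Sigma_g]=-\sum_{i\in I}[E_i]$ for some subset $I\subseteq\{1,\ldots,k\}$, and hence $\int_{\Sigma_g}\omega=-\sum_{i\in I}\int_{E_i}\omega<0$, contradicting the positivity of the symplectic area of $\Sigma_g$. Either way we reach a contradiction, ruling out $X$.

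The main obstacle is the planarity computation for $Y_{d,g}$, which requires a careful setup of the punctured holomorphic-curve moduli in $D(\cO(-d))$ together with the verification of transversality and the correct index count along the lines of \cite{MZ22}. The cobordism obstruction itself, by contrast, is a clean combination of the Eliashberg--Floer--McDuff classification with classical adjunction and the positivity of symplectic area.
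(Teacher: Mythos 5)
The first half of your proposal contains a genuine error: $\Pl_{\Lambda}(Y_{d,g})$ is \emph{not} finite, so \Cref{thm:cofilling} cannot be used here. By \Cref{thm:hyperbolic} (i.e.\ \cite[Theorem 7.14]{MZ22}), the Boothby--Wang manifold over a base admitting no nonconstant holomorphic spheres has infinite planarity, and $\Sigma_g$ is aspherical for $g\ge 1$; since $\Pl\le\Pl_{\Lambda}$, your claimed finiteness of $\Pl_{\Lambda}(Y_{d,g})$ is false. The curves you propose do not witness finite planarity because the order in \Cref{def:order} is computed from rigid point-constrained \emph{rational curves in the symplectization} $\R\times Y_{d,g}$, with negative ends closed up by an augmentation; any such configuration projects to a nonconstant holomorphic sphere in $\Sigma_g$, which does not exist for $g\ge 1$. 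The fiber planes in $D(\cO(-d))$ that you have in mind live in the filling and feed into the augmentation itself, not into the operation $\widehat{\ell}_{\bullet,\epsilon}$ whose surjectivity onto $1$ defines the order. Indeed, the entire point of this theorem (answering Wendl's question in the negative) is that $Y_{d,g}$ is non-cofillable \emph{even though} its planarity is infinite, so the finiteness criterion of \Cref{thm:cofilling} is powerless; the paper instead imports the non-cofillability directly from \cite[Lemmas 2.5 and 2.12]{MR4146343}, which give that $Y_{d,g}$ is not cofillable if and only if $d>2g-2$.

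The second half of your argument is correct, but it is not the paper's proof: it is the alternative argument sketched in the remark immediately following the paper's proof (glue the disk bundle to the putative cobordism, apply the Gromov--Eliashberg--McDuff classification of strong fillings of $(S^3,\xi_{\mathrm{std}})$ as blowups of $B^4$, and derive a contradiction from adjunction). Your execution is in fact more careful than the remark's, in particular in using positivity of the symplectic area of the zero section to dispose of the $g=1$ case, where adjunction alone only forces all coefficients into $\{0,-1\}$. The paper's actual proof of this half runs through planarity functoriality instead: a strong cobordism from $Y_{d,g}$ to $(S^3,\xi_{\mathrm{std}})$, whose finite planarity is realized by augmentation-independent curves in the sense of \Cref{prop:simple}, would force $\overline{\Pl}_{\Lambda}(Y_{d,g})<\infty$ and hence $\Pl(Y_{d,g})<\infty$, contradicting \Cref{thm:hyperbolic}. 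Note that this route makes essential use of the very fact ($\Pl(Y_{d,g})=\infty$) that your first half denies.
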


\subsection{Higher genera and other generalizations}
In \cite{MR2284049}, Gay asked if, under the equivalence relation defined by the existence of strong cobordisms in both directions, there is an equivalence class of contact $3$-manifolds beyond $[\emptyset]$ and $[(S^3,\xi_{\mathrm{ot}})]$, where $\xi_{\mathrm{ot}}$ is an overtwisted contact structure. This question is equivalent to whether there is a contact $3$-manifold that is not strongly fillable and has no strong cobordism to an overtwisted manifold. The same question was also asked by Wendl \cite[Question 1]{MR3128981}. It was answered in the affirmative by the existence of contact manifolds with non-vanishing Heegaard-Floer/monopole-Floer/ECH contact invariants that are not strongly fillable \cite{MR2087073}. This follows from the fact that the vanishing of contact invariants is preserved under strong cobordisms by \cite{MR4127085} (in monopole-Floer) and \cite{hutchings} (in ECH).

If we approach this question (and its higher-dimensional analog) from the SFT perspective, the functoriality in \Cref{thm:strong_torsion} implies that a non-strongly-fillable contact manifold with infinite $\PT$ can also serve as a solution. Therefore, we need obstructions to strong fillings beyond the finiteness of $\PT$. The obvious direction is upgrading the invariants from RSFT to those from the full SFT, i.e.\ considering curves with all genera. The counterpart of $\PT$ in the full SFT is the algebraic torsion $\mathrm{AT}$ in \cite{LW}. Although both torsions share the same class of examples of $3$-manifolds with finite torsions, they do not have any obvious direct relation, at least algebraically. In fact, they are related by a grid of torsions filtered by both the number of punctures and genera. However, finding examples realizing those torsions is a challenging task, as we need to find certain higher genus curves in the symplectization of the contact manifold. In \S \ref{s5}, which was contained in an earlier version of \cite{MZ22}, we discuss the higher genus analogs of our hierarchy functors, and the relation between two notions of torsions (see also \cite{Janko22} for a reformulation of our invariants in the language of the original description of SFT by Eliashberg, Givental and Hofer \cite{EGH}). We will also discuss several variations, in the spirit of Siegel's higher capacities \cite{siegel2019higher}, on invariants coming from counting rational curves with multiple point constraints, as well as more general constraints for our future applications. 

\medskip

\textbf{Acknowledgements.}We thank the anonymous referee for their careful reading of the manuscript and their insightful comments that greatly improved our paper. A.\ Moreno is supported by the National Science Foundation under Grant No.\ DMS-1926686, by the Sonderforschungsbereich TRR 191 Symplectic Structures in Geometry, Algebra and Dynamics, funded by the DFG (Projektnummer 281071066 – TRR 191), and also by the DFG under Germany's Excellence Strategy EXC 2181/1 - 390900948 (the Heidelberg STRUCTURES Excellence Cluster). Z.\ Zhou is supported by the National Key R\&D Program of China under Grant No. 2023YFA1010500, the National Natural Science Foundation of China under Grant No.\ 12288201 and 12231010.
\section{Algebraic Formalism}\label{s2}
\subsection{$BL_\infty$ Algebras}\label{ss:BL}
We begin by recalling the main algebraic notions from \cite{MZ22}, following the same (but streamlined) exposition. Let $V$ be a $\Z/2$-graded vector space over a field $\bk$ of characteristic zero. Consider the $\Z/2$-graded symmetric algebra $S V:=\bigoplus_{k\ge 0} S^k V$ and the non-unital symmetric algebra $\overline{S} V =\bigoplus_{k\ge 1} S^k V$, where $S^kV=\otimes^k V/\Sigma_k$ (with $\Sigma_k$ the symmetric group on $k$ letters) in the graded sense, i.e.,
$$a b=(-1)^{|a||b|} b a$$
for homogeneous elements $a,b$ in $SV$ and $\overline{S}V$.

Let $EV=\overline{S}SV$. We use $\odot$ for the product on the outer symmetric product $\overline{S}$ and $\ast$ (often omitted) for the product on the inner symmetric product $S$. Given the data of a linear operator $p^{k,l}:S^kV \to S^l V$ for $k\ge 1, l\ge 0$, we define a map $\widehat{p}^{k,l}:S^k SV \to SV$ by the following properties.
\begin{enumerate}
	\item $\widehat{p}^{k,l}|_{\odot^k V\subset S^kSV}$ is defined by $p^{k,l}$.
	\item If $w_i\in \bk$, then $\widehat{p}^{k,l}(w_1\odot \ldots \odot w_k)=0$.
	\item $\widehat{p}^{k,l}$ satisfies the Leibniz rule in each argument, i.e., we have
        \begin{equation}\label{eqn:pkl}
            \widehat{p}^{k,l}(w_1\odot \ldots \odot w_k)=\sum_{j=1}^m(-1)^{\square} v_1 \ldots  v_{j-1} \widehat{p}^{k,l}(w_1\odot \ldots \odot v_j \odot \ldots \odot w_k) v_{j+1} \ldots  v_m,
        \end{equation}
	where $w_i=v_1 \ldots  v_m$ and
 \begin{equation}\label{eqn:sign}
\square = \sum_{s=1}^{i-1}|w_s|\cdot \sum_{s=1}^{j-1}|v_s|+\sum_{s=1}^{j-1}|v_s||p^{k,l}|+\sum_{s=i+1}^n |w_s|\cdot \sum_{s=j+1}^{m}|v_s|.
 \end{equation}
\end{enumerate}
Explicitly, $\widehat{p}^{k,l}$ is defined by
\begin{equation}\label{eq:p_hat_1}
    w_1\odot \ldots \odot w_k \mapsto \sum_{\substack{(i_1,\ldots,i_k) \\ 1\le i_j\le n_j}} (-1)^{\bigcirc} p^{k,l}(v^1_{i_1}\ldots v^k_{i_k}) \check{w}_1 \ldots \check{w}_k,
\end{equation}
where $w_j=v^j_1 \ldots  v^j_{n_j}$, $\check{w}_j=v^j_1 \ldots  \check{v}^j_{i_j} \ldots  v^j_{n_j}$, and $\bigcirc$ is determined by $w_1 \ldots  w_k=(-1)^\bigcirc v^1_{i_1} \ldots  v^k_{i_k} \check{w}_1 \ldots \check{w}_k$. Then we define $\widehat{p}^k:S^k S V \to S V$ by $\bigoplus_{l\ge 0} \widehat{p}^{k,l}$. Then we can define $\widehat{p}:EV \to EV$ by
\begin{equation}\label{eqn:q}
    w_1\odot \ldots \odot w_n \mapsto \sum_{k=1}^n\sum_{\sigma \in Sh(k,n-k)}(-1)^{\diamond} \widehat{p}^k(w_{\sigma(1)}\odot \ldots \odot w_{\sigma(k)})\odot w_{\sigma(k+1)}\odot \ldots \odot w_{\sigma(n)},
\end{equation}
where $Sh(k,n-k)$ is the subset of permutations $\sigma$ such that $\sigma(1)<\ldots<\sigma(k)$ and $\sigma(k+1)<\ldots < \sigma(n)$, and
$$\diamond=\sum_{\substack {1\le i < j \le n\\ \sigma(i)>\sigma(j)}}|w_i||w_j|.$$

\begin{definition}[{\cite[Definition 2.3]{MZ22}}]\label{def:BL}
$(V,\{p^{k,l}\})$ is a $BL_\infty$ algebra if $\widehat{p}\circ \widehat{p}=0$ and $|\widehat{p}|=1$.
\end{definition}
This definition implicitly requires that $\widehat{p}$ is well-defined. To ensure this, we can, for example, impose that for any $v_1,\ldots,v_k\in V$, there are at most finitely many $l$ such that $p^{k,l}(v_1\ldots v_k)\ne 0$.

\subsection{The Rules for Tree Calculus}\label{ss:tree}
A useful way to explain the combinatorics of operations is the following description using graphs, which appeared in \cite[\S 3.4.2]{siegel2019higher}. The main advantage of this graphical language is that it frees us from keeping track of signs and explicit components of compositions, e.g., in \Cref{eqn:pkl}, which are governed by graphs.

Let $w\in S^kV$. We can represent $w$ by an element $\overline{w}$ in $\otimes^k V$, i.e., $\overline{w}=\sum_{i=1}^N c_iv^i_1\otimes \ldots \otimes v^i_k$ for $c_i\in \bk$ and $v_*^*\in V$, such that $\pi(\overline{w})=w$ for $\pi:\otimes^kV \to S^kV$. We represent it by a rooted tree with $k$ leaves (represented by $\bullet$) labeled by $\overline{w}$. The leaves are ordered from left to right to indicate the $k$ copies of $V$ in $\otimes^k V$. When $\overline{\omega} = v_1\otimes \ldots \otimes v_k$, we may label the leaves by $v_1,\ldots,v_k$ to mean the same thing. We can view a general labeled tree as a formal linear combination of such trees with labeled leaves.
    	\begin{center}
        \begin{tikzpicture}
        \node at (0,0) [circle,fill,inner sep=1.5pt] {};
		\node at (1,0) [circle,fill,inner sep=1.5pt] {};
		\node at (2,0) [circle,fill,inner sep=1.5pt] {};
		\draw (0,0) to (1,1) to (1,0);
		\draw (1,1) to (2,0);
        \node at (2,1) {$\overline{w}\in \otimes^3 V$};
        \end{tikzpicture}
        \qquad
		\begin{tikzpicture}
		\node at (0,0) [circle,fill,inner sep=1.5pt] {};
        \node at (0.3,0) {$v_1$};
		\node at (1,0) [circle,fill,inner sep=1.5pt] {};
        \node at (1.3,0) {$v_2$};
		\node at (2,0) [circle,fill,inner sep=1.5pt] {};
        \node at (2.3,0) {$v_3$};
		\draw (0,0) to (1,1) to (1,0);
		\draw (1,1) to (2,0);
		\end{tikzpicture}
	\end{center}
Now let $s\in S^kSV$. We can represent $s$ by $\overline{s}\in \boxtimes^k TV$, where $TV=\oplus_{k\in \N}(\otimes^k V)$. Here we use $\boxtimes$ to differentiate it from the inner tensor $\otimes$. We write
$$\overline{s}=\sum_{i=1}^N c_i\overline{w}^i_1 \boxtimes \ldots  \boxtimes \overline{w}^i_k, \quad c_i\in \bk, \overline{w}_*^*\in \otimes^{m^*_*}V.$$
We represent $\overline{w}^i_1 \boxtimes\ldots  \boxtimes \overline{w}^i_k$ by an ordered forest of labeled trees as follows. Then $\overline{s}$ is a formal linear combination of such forests.
\begin{figure}[H]
    \begin{center}
		\begin{tikzpicture}
		\node at (0,0) [circle,fill,inner sep=1.5pt] {};
		\node at (1,0) [circle,fill,inner sep=1.5pt] {};
		\node at (2,0) [circle,fill,inner sep=1.5pt] {};
		\node at (3,0) [circle,fill,inner sep=1.5pt] {};
		\node at (4,0) [circle,fill,inner sep=1.5pt] {};
		\node at (5,0) [circle,fill,inner sep=1.5pt] {};
		\node at (6,0) [circle,fill,inner sep=1.5pt] {};
		\node at (7,0) [circle,fill,inner sep=1.5pt] {};
		
		\draw (0,0) to (1,1) to (1,0);
		\draw (1,1) to (2,0);
		\draw (3,0) to (4,1) to (4,0);
		\draw (4,1) to (5,0);
		\draw (6,0) to (6.5,1) to (7,0);

       \node at (2,1) {$\overline{w}_1\in \otimes^3V$};
       \node at (5,1) {$\overline{w}_2\in \otimes^3V$};
       \node at (7.5, 1) {$\overline{w}_3\in \otimes^2V$};
	\end{tikzpicture}
	\end{center}
    \caption{A forest of labeled trees}
    \label{fig:forest}
\end{figure}
We represent the operation $p^{k,l}:S^kV\to S^lV$ by a graph with $k+l+1$ vertices: $k$ top input vertices, $l$ bottom output vertices, and one middle vertex $\tikz\draw[black,fill=white] (0,-1) circle (0.4em);$ labeled by $p^{k,l}$ representing the operation type.
    \begin{center}
        \begin{tikzpicture}
        \node at (2,0) [circle,fill,inner sep=1.5pt] {};
		\node at (3,0) [circle,fill,inner sep=1.5pt] {};
        \draw (2,0) to (2.5,-1) to (3,0);
		\draw (2,-2) to (2.5,-1) to (2.5,-2);
		\draw (2.5,-1) to (3,-2);
		\node at (2.5,-1) [circle, fill=white, draw, outer sep=0pt, inner sep=3 pt] {};
        \node at (2,-2) [circle,fill,inner sep=1.5pt] {};
	    \node at (3,-2) [circle,fill,inner sep=1.5pt] {};
	    \node at (2.5,-2) [circle,fill,inner sep=1.5pt] {};
        \node at (3,-1) {$p^{2,3}$};
        \end{tikzpicture}
    \end{center}
    
So far the discussion is completely formal without any actual content. The real content lies in the following interpretation of a glued graph, whose definition will be clear from an example.
\begin{figure}[H]
	\begin{center}
		\begin{tikzpicture}
		\node at (0,0) [circle,fill,inner sep=1.5pt] {};
		\node at (1,0) [circle,fill,inner sep=1.5pt] {};
		\node at (2,0) [circle,fill,inner sep=1.5pt] {};
		\node at (3,0) [circle,fill,inner sep=1.5pt] {};
		\node at (4,0) [circle,fill,inner sep=1.5pt] {};
		\node at (5,0) [circle,fill,inner sep=1.5pt] {};
		\node at (6,0) [circle,fill,inner sep=1.5pt] {};
		\node at (7,0) [circle,fill,inner sep=1.5pt] {};
		
		\draw (0,0) to (1,1) to (1,0);
		\draw (1,1) to (2,0);
		\draw (3,0) to (4,1) to (4,0);
		\draw (4,1) to (5,0);
		\draw (6,0) to (6.5,1) to (7,0);

		\draw (2,0) to (2.5,-1) to (3,0);
		\draw (2,-2) to (2.5,-1) to (2.5,-2);
		\draw (2.5,-1) to (3,-2);
		\node at (2.5,-1) [circle, fill=white, draw, outer sep=0pt, inner sep=3 pt] {};
		
		\node at (0,-2) [circle,fill,inner sep=1.5pt] {};
		\node at (1,-2) [circle,fill,inner sep=1.5pt] {};
		\node at (2,-2) [circle,fill,inner sep=1.5pt] {};
		\node at (3,-2) [circle,fill,inner sep=1.5pt] {};
		\node at (4,-2) [circle,fill,inner sep=1.5pt] {};
		\node at (5,-2) [circle,fill,inner sep=1.5pt] {};
		\node at (6,-2) [circle,fill,inner sep=1.5pt] {};
		\node at (7,-2) [circle,fill,inner sep=1.5pt] {};
		\node at (2.5,-2) [circle,fill,inner sep=1.5pt] {};
		
		\draw[dashed] (0,0) to (0,-2);
		\draw[dashed] (1,0) to (1,-2);
		\draw[dashed] (4,0) to (4,-2);
		\draw[dashed] (5,0) to (5,-2);
		\draw[dashed] (6,0) to (6,-2);
		\draw[dashed] (7,0) to (7,-2);
		\node at (0.4,0) {$v_1$};
		\node at (1.4,0) {$v_2$};
		\node at (2.4,0) {$v_3$};
		\node at (3.4,0) {$v_4$};
		\node at (4.4,0) {$v_5$};
		\node at (5.4,0) {$v_6$};
		\node at (6.4,0) {$v_7$};
		\node at (7.4,0) {$v_8$};
		\end{tikzpicture}
	\end{center}
    \caption{Gluing forests $\Leftrightarrow$ applying operations}
    \label{fig:gluing}
\end{figure}
The above glued graph represents a forest: we first fix a representative $\overline{p}$ of $p^{2,3}(v_3v_4)$ in $\otimes^3V$. The glued forest in Figure \ref{fig:forest} represents $\pm (v_1\otimes v_2 \otimes \overline{p}\otimes v_5\otimes v_6)\boxtimes (v_7\otimes v_8)$. In the gluing, we do not create cycles in the glued graph (which in the context of SFT translates to the fact that underlying curves of those SFT buildings, viewed as nodal curves, have arithmetic genus zero). Each dashed line represents the identity map, and each connected component represents a tree in the output. Drawing the input element as a forest of ordered trees with ordered leaves corresponds to choosing representatives from the tensor product, not the symmetric product. Finally, when we draw the glued graph on a plane as above (i.e., choosing an order of the trees and leaves, hence edges may cross), it will determine a representative in the tensor product, but we view different orders as equivalent up to the obvious sign change. For example, the following is an equivalent gluing to Figure \ref{fig:gluing}, but with an extra sign when viewing it in the tensor product.
	\begin{center}
		\begin{tikzpicture}
		\node at (0,0) [circle,fill,inner sep=1.5pt] {};
		\node at (1,0) [circle,fill,inner sep=1.5pt] {};
		\node at (2,0) [circle,fill,inner sep=1.5pt] {};
		\node at (3,0) [circle,fill,inner sep=1.5pt] {};
		\node at (4,0) [circle,fill,inner sep=1.5pt] {};
		\node at (5,0) [circle,fill,inner sep=1.5pt] {};
		\node at (6,0) [circle,fill,inner sep=1.5pt] {};
		\node at (7,0) [circle,fill,inner sep=1.5pt] {};
		
		\draw (0,0) to (1,1) to (1,0);
		\draw (1,1) to (2,0);
		\draw (3,0) to (4,1) to (4,0);
		\draw (4,1) to (5,0);
		\draw (6,0) to (6.5,1) to (7,0);

		\draw (2,0) to (2.5,-1) to (3,0);
		\draw (2,-2) to (2.5,-1) to (2.5,-2);
		\draw (2.5,-1) to (4.5,-2);
		\node at (2.5,-1) [circle, fill=white, draw, outer sep=0pt, inner sep=3 pt] {};
		
		\node at (0,-2) [circle,fill,inner sep=1.5pt] {};
		\node at (1,-2) [circle,fill,inner sep=1.5pt] {};
		\node at (2,-2) [circle,fill,inner sep=1.5pt] {};
		\node at (4.5,-2) [circle,fill,inner sep=1.5pt] {};
		\node at (4,-2) [circle,fill,inner sep=1.5pt] {};
		\node at (5,-2) [circle,fill,inner sep=1.5pt] {};
		\node at (6,-2) [circle,fill,inner sep=1.5pt] {};
		\node at (7,-2) [circle,fill,inner sep=1.5pt] {};
		\node at (2.5,-2) [circle,fill,inner sep=1.5pt] {};
		
		\draw[dashed] (0,0) to (0,-2);
		\draw[dashed] (1,0) to (1,-2);
		\draw[dashed] (4,0) to (4,-2);
		\draw[dashed] (5,0) to (5,-2);
		\draw[dashed] (6,0) to (6,-2);
		\draw[dashed] (7,0) to (7,-2);
		\node at (0.4,0) {$v_1$};
		\node at (1.4,0) {$v_2$};
		\node at (2.4,0) {$v_3$};
		\node at (3.4,0) {$v_4$};
		\node at (4.4,0) {$v_5$};
		\node at (5.4,0) {$v_6$};
		\node at (6.4,0) {$v_7$};
		\node at (7.4,0) {$v_8$};
		\end{tikzpicture}
	\end{center}
The sign is determined similarly to \Cref{eqn:sign}; in the case of Figure \ref{fig:gluing}, the sign is $(-1)^{(|v_1|+|v_2|)|p^{2,3}|}$. In a formal description, we apply order changes to the input forest (edges can cross), then we glue $p^{k,l}$ such that there is no new edge crossing, and finally, we change the output order back to the chosen one. The final sign is given by the product of the sign changes of the two order changes and the sign of the composition using the Koszul-Quillen convention. Writing the forest using a glued graph, as in Figure \ref{fig:gluing}, contains slightly more refined information than just labeling the forest as in Figure \ref{fig:forest}, namely, we keep track of which leaves are from $p^{k,l}$ in a representative. The following observation is tautological.
\begin{proposition}
    The output of a glued forest is well-defined in $EV$.
\end{proposition}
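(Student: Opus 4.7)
The plan is to verify, one source of ambiguity at a time, that the element read off from a glued forest descends from a representative in some $\boxtimes^N TV$ to a well-defined class in $\overline{S}SV = EV$. The choices that need to be audited are: (i) the representative $\overline{w}_j\in \otimes^{n_j}V$ of each input symmetric tensor $w_j\in S^{n_j}V$; (ii) the ordering of the $k$ trees in the representative $\overline{s}\in \boxtimes^k TV$ of $s\in S^kSV$; (iii) the representative $\overline{p}\in \otimes^l V$ of each value $p^{k,l}(v^1_{i_1}\odot\ldots\odot v^k_{i_k})$; and (iv) the planar embedding of the glued graph, which fixes which input leaves feed into the operation vertex and in what order the dashed identity edges cross. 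My goal in each case is to show that the change in output is exactly the Koszul sign needed for the class in $EV$ to be unaffected.

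First, I would handle (i) and (iii), which are structurally identical: a transposition of two adjacent tensor factors within a single tree introduces a graded transposition in $\otimes^\bullet V$, and the Koszul sign that appears is precisely the one that makes the output component well-defined modulo the symmetric relation on the corresponding output tree in $SV$. Next, for (ii), a swap of two adjacent input trees produces a sign governed by the outer $\boxtimes$-degrees of the trees involved, and this coincides with the sign coming from the symmetric relation in the outer product $\overline{S}$ applied to the corresponding output trees in $\overline{S}SV$. These two steps are essentially sign verifications on adjacent transpositions, and the general case follows by iterating.

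For (iv), the key observation is that any two planar embeddings of the same abstract glued graph differ by a sequence of elementary moves, namely reorderings of input leaves within a single tree, reorderings of the input trees themselves, and reorderings of the output leaves of one component. Each elementary move produces exactly the sign prescribed by the rule of \eqref{eqn:sign}, which is the tensor-algebra shadow of the symmetric equivalence in $EV$. Checking this on a single adjacent transposition is all that is required, after which the general case follows by iteration, by the Koszul--Quillen compatibility of signs with composition, and by the explicit formula \eqref{eq:p_hat_1} for $\widehat{p}^{k,l}$, which matches the tree-calculus output on the nose.

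The main obstacle is the sign bookkeeping itself: verifying that the rule ``input reordering sign $\times$ Koszul sign of the composition with $p^{k,l}$ $\times$ output reordering sign'' really matches the expected Koszul sign in $\overline{S}SV$ in every case. The cleanest approach is to isolate a universal check on a single elementary transposition of two adjacent factors, and then let the functoriality of Koszul signs under composition handle the rest; an auxiliary reduction to the case of a single operation vertex (no nested gluings) first is also helpful, since compositions of operations can then be inserted one at a time without reopening the sign analysis.
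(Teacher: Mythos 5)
Your proposal is correct and follows the same route the paper intends: the paper treats this as a tautology because the sign rule for redrawing a glued graph (input reordering, Koszul--Quillen sign of the composition, output reordering) is by construction the Koszul sign absorbed by the symmetric quotients defining $\overline{S}SV=EV$, and your reduction to adjacent transpositions is exactly the verification being waved at. One small scoping remark: your item (i), independence of the representative $\overline{w}_j$ of the input, is not really part of this proposition but of the companion one — for a \emph{fixed} gluing, a transposition that exchanges a glued leaf with an unglued one changes which leaf feeds $p^{k,l}$ and hence changes the term, and this ambiguity is only resolved after summing over all gluings; for the present statement the gluing data is fixed and only the planar-ordering ambiguities (ii)--(iv) need checking.
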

To enumerate all admissible gluings, each output leaf and tree is considered as different. However, we do not differentiate the input leaves of $p^{k,l}$. Therefore, when we glue a $p^{k,l}$ component, we pick $k$ trees (this is $Sh(k,n-k)$ in \Cref{eqn:q}) from the forest and then one leaf from each chosen tree (that is $1\le i_j\le n_j$ in \Cref{eq:p_hat_1}) to glue to $p^{k,l}$. For example, in the situation of Figure \ref{fig:gluing}, we have $3*3+3*2+3*2=21$ direct ways to glue $p^{2,3}$. The ambiguity from choosing a representative of the input is then eliminated by summing over all possible gluings, as shown by the following tautological observation.
\begin{proposition}
    When summed over all possible gluings of one $p^{k,l}$, the output is independent of the choice of representatives of the input forest.
\end{proposition}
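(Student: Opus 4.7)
The plan is to verify invariance of the sum of admissible gluings under the two types of elementary relations that generate the quotient $\boxtimes^k TV \twoheadrightarrow S^k SV$: (a) Koszul-transposition of two adjacent leaves inside a single tree of the input forest, and (b) Koszul-transposition of two adjacent trees of the forest itself. Any two representatives of the same $s \in S^k SV$ differ by a finite sequence of such moves, so invariance under these two generators suffices.

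For relation (a), fix a tree $w_j = v_1 \cdots v_{n_j}$ in the forest and consider the swap of $v_i$ and $v_{i+1}$. I would partition the admissible gluings of $p^{k,l}$ according to whether $w_j$ is among the $k$ selected trees and, if so, which leaf of $w_j$ is fed into $p^{k,l}$. In all gluings where $w_j$ is not selected, or where the chosen leaf is neither $v_i$ nor $v_{i+1}$, both $v_i$ and $v_{i+1}$ remain in an output tree, which lives in $SV$; the swap therefore acts inside $SV$ by the intended Koszul sign, matching the input side. In the remaining gluings the chosen leaf is $v_i$ or $v_{i+1}$, and I would pair the gluing picking $v_i$ in the original forest with the gluing picking $v_{i+1}$ in the swapped forest, using that $p^{k,l}\colon S^k V \to S^l V$ is itself Koszul-symmetric in its $k$ arguments to reconcile the signs.

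For relation (b), swap two adjacent trees $w_i, w_{i+1}$ and organize the gluings by which of the two is selected. If both are selected or both are unselected, the swap acts only on the output forest, which lives in $\overline{S}\,SV = EV$, so the required Koszul sign appears tautologically. If exactly one of them is selected, I would pair the gluing for the original forest with the gluing for the swapped forest in which the roles of $w_i, w_{i+1}$ are interchanged; the sign from commuting the chosen leaf past the unselected tree, encoded by $\bigcirc$ in \eqref{eq:p_hat_1}, combines with the $\Sigma_k$-symmetry of $p^{k,l}$ to reproduce the input Koszul sign $(-1)^{|w_i||w_{i+1}|}$.

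The main obstacle is purely sign bookkeeping in the mixed subcases of (a) and (b), where one of the transposed objects is fed into $p^{k,l}$ and the other is not. This is precisely what the signs $\bigcirc$ in \eqref{eq:p_hat_1} and $\square$ in \eqref{eqn:sign} are designed to track: they record the Koszul cost of commuting the chosen leaves into the positions where $p^{k,l}$ acts, and an input-side elementary transposition changes this cost by exactly the Koszul sign of the transposition. Once compatibility is verified in these two elementary situations, invariance under an arbitrary change of representative of the input forest follows, and the output lies well-defined in $EV$.
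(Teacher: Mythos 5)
The paper offers no written proof of this proposition---it records it as a tautological consequence of how the sign $\bigcirc$ in \eqref{eq:p_hat_1} is defined---and your plan of reducing to adjacent Koszul transpositions of leaves and of trees, then partitioning the admissible gluings according to whether the transposed objects are selected, is precisely the verification that tautology encodes. One step is misstated, though it is easily repaired: in the mixed subcase of (a) you propose to pair the gluing that selects $v_i$ from the original representative with the gluing that selects $v_{i+1}$ from the swapped one. Those two gluings feed genuinely different elements into $p^{k,l}$ (one receives $v_i$, the other $v_{i+1}$) and leave different elements behind in the output word, so they are not individually equal, and the symmetry of $p^{k,l}$ in its $k$ slots cannot reconcile them---that symmetry permutes which slot an argument occupies but cannot exchange $v_i$ for $v_{i+1}$. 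The correct pairing is by the identity of the selected leaf: selecting $v_i$ from the original representative matches selecting $v_i$ (now sitting at position $i+1$) from the swapped one. Then the argument of $p^{k,l}$ and the leftover word $\check{w}_j$ coincide on both sides, and since $\bigcirc$ is defined as the Koszul sign of rearranging the full tensor word $w_1\cdots w_k$, it changes by exactly $|v_i|\,|v_{i+1}|$, cancelling the prefactor carried by the swapped representative; no symmetry of $p^{k,l}$ is needed in this case. With that substitution, and the analogous identity-pairing in the mixed subcase of (b) (where the $\Sigma_k$-symmetry of $p^{k,l}$ genuinely is used, namely when both transposed trees are selected and their chosen leaves change order as arguments), your argument is complete and is the intended one.
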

Combining the above two propositions, we see that gluing forests corresponds to operations on $EV$. Indeed, the language of trees and forests packages the signs and components in \Cref{eqn:pkl,eq:p_hat_1,eqn:q} by providing a geometric intuition. The translation into forests makes it easier to understand algebraic relations; for example, many relations come from interpreting the same glued forests in two different ways. The following is a dictionary of the algebraic formulae in \S \ref{ss:BL} in terms of forests.
\begin{center}
\begin{tabular}{c|c}
  $\widehat{p}^{k,l}$ on $\odot^kV$ & \makecell{(unique) gluing of $p^{k,l}$ to a  \\ forest of $k$ trees with a single leaf} \\
  \hline
  $\widehat{p}^{k,l}$ in \Cref{eq:p_hat_1}  & \makecell{sum of gluings of $p^{k,l}$ to a \\ forest of $k$ trees to get a tree}    \\
  \hline
  $\widehat{p}^k$  & \makecell{sum of gluings of $p^{k,*}$ to a \\ forest of $k$ trees to get a tree}  \\
  \hline
   $\widehat{p}$ in  \Cref{eqn:q}  & \makecell{sum of gluings of $p^{*,*}$ to a \\ forest to get a forest} \\
   \hline
   $\widehat{p}^{k,l}(w_1\odot \ldots \odot w_k)=0$ if $w_i\in \bk$ & \makecell{no way to glue $p^{k,l}$ to a forest \\with a tree without a leaf to get a tree}\\
\end{tabular}
\end{center}
In the following, we will use the graphical description for morphisms, augmentations, and Maurer-Cartan elements without giving explicit formulae like \Cref{eqn:pkl,eqn:q}. The explicit formulae, except for the Maurer-Cartan elements, can be found in \cite[\S 2]{MZ22}.

\subsection{Morphisms} We now define morphisms between $BL_{\infty}$ algebras. Consider a family of operators $\{\phi^{k,l}:S^k V\to S^l V'\}_{k\ge 1,l\ge 0}$; we can construct a map $\widehat{\phi}:EV\to EV'$ from the following tree description. To represent $\phi^{k,l}$, we use a graph similar to the one representing $p^{k,l}$ but replace $\tikz\draw[black,fill=white] (0,-1) circle (0.4em);$ by $\tikz\draw[black,fill=black] (0,-1) circle (0.4em);$ to indicate that they are maps of different roles. To represent a component configuration in the definition of $\widehat{\phi}$ on $S^{i_1}V\odot\ldots \odot S^{i_n}V$, we glue a family of graphs representing $\phi^{k,l}$ such that the input vertices and the output vertices of the top forest are completely paired and glued and the resulting graph has no cycles. Then $\widehat{\phi}$ is the sum of all possible configurations. Unlike the definition of $\widehat{p}$, where we need to glue exactly one $p^{k,l}$ graph, it is possible that we do not glue in any $\phi^{k,l}$ graphs. This is the case when the input is in $\odot^m\bk$ and $\widehat{\phi}$ is the identity in such a case, i.e., $\widehat{\phi}(1\odot \ldots \odot 1) = 1\odot \ldots \odot 1$.
\begin{figure}[H]
	\begin{center}
		\begin{tikzpicture}
		\node at (0,0) [circle,fill,inner sep=1.5pt] {};
		\node at (1,0) [circle,fill,inner sep=1.5pt] {};
		\node at (2,0) [circle,fill,inner sep=1.5pt] {};
		\node at (3,0) [circle,fill,inner sep=1.5pt] {};
		\node at (4,0) [circle,fill,inner sep=1.5pt] {};
		\node at (5,0) [circle,fill,inner sep=1.5pt] {};
		\node at (6,0) [circle,fill,inner sep=1.5pt] {};
		\node at (7,0) [circle,fill,inner sep=1.5pt] {};
		
		\draw (0,0) to (1,1) to (1,0);
		\draw (1,1) to (2,0);
		\draw (3,0) to (4,1) to (4,0);
		\draw (4,1) to (5,0);
		\draw (6,0) to (6.5,1) to (7,0);

		\draw (2,0) to (2.5,-1) to (3,0);
		\draw (2,-2) to (2.5,-1) to (2.5,-2);
		\draw (2.5,-1) to (3,-2);
		\node at (2.5,-1) [circle, fill, draw, outer sep=0pt, inner sep=3 pt] {};
		
		\node at (0,-2) [circle,fill,inner sep=1.5pt] {};
		\node at (1,-2) [circle,fill,inner sep=1.5pt] {};
		\node at (2,-2) [circle,fill,inner sep=1.5pt] {};
		\node at (3,-2) [circle,fill,inner sep=1.5pt] {};
		\node at (4,-2) [circle,fill,inner sep=1.5pt] {};
		\node at (2.5,-2) [circle,fill,inner sep=1.5pt] {};
	
		\draw (0,0) to (0,-2);
		\draw (1,0) to (1,-2);
		\draw (4,0) to (4,-2);
		\draw (5,0) to (5.5,-1);
		\draw (5.5,-1) to (6,0);
		\draw (7,0) to (7,-1);
		
		\node at (0,-1) [circle, fill, draw, outer sep=0pt, inner sep=3 pt] {};
		\node at (1,-1) [circle, fill, draw, outer sep=0pt, inner sep=3 pt] {};
		\node at (4,-1) [circle, fill, draw, outer sep=0pt, inner sep=3 pt] {};
		\node at (5.5,-1) [circle, fill, draw, outer sep=0pt, inner sep=3 pt] {};
		\node at (7,-1) [circle, fill, draw, outer sep=0pt, inner sep=3 pt] {};
		\end{tikzpicture}
	\end{center}
	\caption{A component of $\widehat{\phi}$ from $S^3V \odot S^3 V \odot S^2 V$ to $S^6V^\prime$}
\end{figure}

\begin{definition}[{\cite[Definition 2.10]{MZ22}}]\label{def:morphism}
 $\widehat \phi$ is a $BL_{\infty}$ morphism from $(V,\widehat p)$ to $(V',\widehat p')$ if $\widehat{\phi}\circ \widehat{p}=\widehat{p}'\circ \widehat{\phi}$ and $|\widehat{\phi}|=0$.
\end{definition}
This definition implicitly assumes that $\widehat{\phi}$ is well-defined. For example, when $\phi$ arises from counting holomorphic curves in an exact cobordism, we have for any $v_1 \ldots  v_k\in S^kV$ there are at most finitely many $l$ such that $\phi^{k,l}(v_1 \ldots  v_k)\ne 0$. Hence $\widehat{\phi}$ is well-defined.

The $(\psi\circ \phi)^{k,l}$ component of the composition of two $BL_\infty$ morphisms $\phi,\psi$ is represented by connected graphs without cycles glued from one level of $\phi$ and one level of $\psi$. It is clear from the tree description that $\widehat{\psi\circ \phi}=\widehat{\psi}\circ \widehat{\phi}$ and the composition is associative; see \cite[\S 2.4]{MZ22}.

\subsection{Augmentations}
The trivial vector space $V=\{0\}$ has a unique trivial $BL_\infty$ algebra structure with $p^{k,l}=0$. We use $\mathbf{0}$ to denote this trivial $BL_\infty$ algebra.
\begin{definition}[{\cite[Definition 2.13]{MZ22}}]
A $BL_\infty$ augmentation is a $BL_\infty$ morphism $\epsilon:V \to \mathbf{0}$, i.e., a family of operators $\epsilon^k:S^kV \to \bk$ satisfying Definition \ref{def:morphism}.
\end{definition}

Note that the existence of an augmentation implies that $H_*(EV,\widehat{p})\neq 0$, as $\epsilon$ descends to homology and $\epsilon_*1=1\ne 0\in H_*(E\mathbf{0},0)$. For a $BL_\infty$ algebra $V$, we let \begin{equation}\label{eq:B}
    E^kV=\overline{B}^kSV:=\bigoplus_{j=1}^k S^jSV,
\end{equation} which is a filtration (sentence length filtration) on $EV$ compatible with the differential $\widehat{p}$. We have $E\mathbf{0}=\bk \oplus S^2\bk \oplus +\ldots$ with $\widehat{p}=0$, and $H_*(E^k\mathbf{0})=E^k\mathbf{0}$ for all $k\ge 1$. We define $1_\mathbf{0}$ to be the generator in $E^1\mathbf{0}$; then $1_{\mathbf{0}}\ne 0\in H_*(E^k\mathbf{0})$ for all $k\ge 1$. Then we define $1_V\in H_*(E^kV)$ to be the image of $1_{\mathbf{0}}$ under the chain map $E^k\mathbf{0}\to E^kV$ induced by the trivial $BL_\infty$ morphism $\mathbf{0}\to V$.

\begin{definition}[{\cite[Definition 2.15]{MZ22}}]
We define the torsion of a $BL_\infty$ algebra $V$ to be
$$\TT(V):= \min\{k-1| 1_V=0 \in H_*(E^kV),k\ge 1\}.$$
Here, the minimum of an empty set is defined to be $\infty$.
\end{definition}
By definition, we have that $\TT(V)=0$ iff $1_V\in H^*(SV,\widehat{p}^1)$ is zero. Since $H^*(SV,\widehat{p}^1)$ is an algebra with $1_V$ a unit, in this case we have $H^*(SV,\widehat{p}^1)=0$. In general, a finite torsion is an obstruction to augmentations, and the hierarchy of torsion is functorial with respect to $BL_\infty$ morphisms.
\begin{proposition}[{\cite[\S 2.5]{MZ22}}]
 If there is a $BL_\infty$ morphism from $V$ to $V'$, then $\TT(V)\ge \TT(V')$. In particular, if $\TT(V)<\infty$, there is no augmentation (as $\TT(\mathbf{0})=+\infty$).
\end{proposition}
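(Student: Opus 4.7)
The plan is to use the fact that a $BL_\infty$ morphism $\phi:V\to V'$ gives rise to a chain map $\widehat{\phi}:EV\to EV'$ sending $1_V$ to $1_{V'}$ and compatible with the sentence-length filtration $E^\bullet$; granting these, the monotonicity is a one-line diagram chase. Concretely, if $\TT(V)=k-1<\infty$, pick $x\in E^kV$ with $\widehat{p}(x)=1_V$; then $\widehat{\phi}(x)\in E^kV'$ and
\[
1_{V'}=\widehat{\phi}(1_V)=\widehat{\phi}\,\widehat{p}(x)=\widehat{p}'\,\widehat{\phi}(x),
\]
so $1_{V'}=0$ in $H_*(E^kV')$ and $\TT(V')\le k-1=\TT(V)$.

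The chain-map property is built into Definition \ref{def:morphism}. For the identity $\widehat{\phi}(1_V)=1_{V'}$, I would use that the trivial morphism $\iota_{V'}:\mathbf 0\to V'$ factors as $\phi\circ\iota_V$, where $\iota_V:\mathbf 0\to V$ is the trivial morphism used to define $1_V$; indeed, every tree in a composition involving $\iota_V$ contains a $\iota_V^{k,l}$-vertex with $k\ge 1$, and these are all zero, so $\phi\circ\iota_V=\iota_{V'}$. Functoriality of the bar construction, which is tautological from the tree calculus of \S\ref{ss:tree}, then gives $\widehat{\phi}\circ\widehat{\iota_V}=\widehat{\iota_{V'}}$, and evaluating on $1_{\mathbf 0}$ yields the claim.

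The step I expect to be the main obstacle is the filtration compatibility $\widehat{\phi}(E^kV)\subseteq E^kV'$. A component of $\widehat{\phi}$ on an input forest is built by gluing $\phi^{k_i,l_i}$ vertices so that every input leaf is paired and the resulting graph has no cycles; the number of output trees equals the number of glued $\phi$-vertices, and one must check it never exceeds the number of input trees. The no-cycles rule constrains how leaves of a given input tree may be distributed among the $\phi$-vertices, and I would use this constraint to extract the desired bound on the number of output letters in terms of the input sentence length. Once compatibility is in hand, the ``in particular'' clause is immediate: $\TT(\mathbf 0)=+\infty$ since $\widehat{p}=0$ on $\mathbf 0$ and $1_{\mathbf 0}$ is a nonzero generator of $H_*(E^k\mathbf 0)=E^k\mathbf 0$ for every $k\ge 1$, so the monotonicity applied to a hypothetical augmentation $\epsilon:V\to\mathbf 0$ would force $\TT(V)\ge\TT(\mathbf 0)=+\infty$, contradicting $\TT(V)<\infty$.
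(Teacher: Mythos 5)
Your overall strategy is the right one and is what the cited reference does: the paper itself only quotes this proposition from \cite[\S 2.5]{MZ22}, and the intended proof is exactly the diagram chase you describe, namely that $\widehat\phi$ is a chain map preserving the sentence-length filtration and sending $1_V$ to $1_{V'}$. Your treatment of $\widehat\phi(1_V)=1_{V'}$ (via $\phi\circ\iota_V=\iota_{V'}$, which holds because $S^k\{0\}=0$ for $k\ge 1$ forces every component of the composite to vanish) and of the ``in particular'' clause are both fine.

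However, the one step you single out as the crux --- $\widehat\phi(E^kV)\subseteq E^kV'$ --- is left open, and the concrete claim you make about it is false: the number of output words is \emph{not} the number of glued $\phi$-vertices. In the paper's own Figure illustrating ``a component of $\widehat\phi$ from $S^3V\odot S^3V\odot S^2V$ to $S^6V'$'' there are six $\phi^{k,l}$-vertices but the output is the single word $S^6V'$. Under the paper's conventions, each connected component of the glued (acyclic) graph produces exactly one output word, obtained by multiplying (with the inner product $\ast$) the outputs of all $\phi$-vertices lying in that component; this is the morphism analogue of the rule that gluing one $p^{k,l}$ to $k$ trees yields one tree. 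Had your claim been correct, the bound you want would in fact fail (six output words from three input words in the Figure), and $\widehat\phi$ would not be filtered. The correct argument is shorter than what you anticipate: since every $\phi^{k,l}$ has $k\ge 1$ inputs and all input leaves are paired, every connected component of the glued graph contains at least one input tree (a leafless input word in $\bk$ forms its own component and passes through unchanged), and distinct components contain disjoint sets of input trees; hence the number of output words, i.e.\ of components, is at most the number of input words, giving $\widehat\phi(E^kV)\subseteq E^kV'$. With this substitution your proof is complete.
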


Given a $BL_\infty$ augmentation $\epsilon$, one can change coordinates to eliminate all constant terms $p^{k,0}$ following \cite[\S 2.5]{MZ22}. When $p^{k,0}=0$, $p^{1,1}$ becomes a differential on $V$ and $\{p^{k,1}\}$ form an $L_\infty$ structure (after fixing suitable sign conventions; see \cite[\S 2.1, 2.2]{MZ22}) on $V$. Therefore, we call the $BL_\infty$ structural maps, after such modification, the linearized structural maps. More precisely, we can define linearized structural maps via graphs as follows.
\begin{figure}[H]
	\begin{center}
		\begin{tikzpicture}
		\node at (0,0) [circle,fill,inner sep=1.5pt] {};
		\node at (1,0) [circle,fill,inner sep=1.5pt] {};
		\node at (2,0) [circle,fill,inner sep=1.5pt] {};
		\node at (3,0) [circle,fill,inner sep=1.5pt] {};
		\node at (0,-2) [circle,fill,inner sep=1.5pt] {};
		\node at (1,-2) [circle,fill,inner sep=1.5pt] {};
		\node at (1.5,-2) [circle,fill,inner sep=1.5pt] {};
		\node at (2,-2) [circle,fill,inner sep=1.5pt] {};
		\node at (3,-2) [circle,fill,inner sep=1.5pt] {};
		\draw[dashed] (0,0) to (0,-2);
		\draw[dashed] (3,0) to (3,-2);
		\draw (1,0) to (1.5,-1) to (1,-2) to (0.5,-3) to (0,-2);
		\draw (2,0) to (1.5,-1) to (2,-2) to (2.5,-3) to (3,-2);
		\draw (1.5,-1) to (1.5,-2);
		\node at (0.5,-3) [circle, fill, draw, outer sep=0pt, inner sep=3 pt] {};
		\node at (2.5,-3) [circle, fill, draw, outer sep=0pt, inner sep=3 pt] {};
		\node at (1.5,-1) [circle, fill=white, draw, outer sep=0pt, inner sep=3 pt] {};
		\node at (0.5,-3.4) {$\epsilon^2$};
		\node at (2.5,-3.4) {$\epsilon^2$};
		\node at (2, -1) {$p^{2,3}$};
		\end{tikzpicture}
	\end{center}
    \caption{A component of $p^{4,1}_{\epsilon}$}
\end{figure}
The linearized structural map $p^{k,l}_{\epsilon}$ is defined by gluing the level containing a $p^{*,*}$ in the definition of $\widehat{p}$ to the bottom level of the definition of $\widehat{\epsilon}$, such that the glued graph is a tree with $k$ inputs and $l$ outputs.

\begin{proposition}[{\cite[Proposition 2.18]{MZ22}}]\label{prop:linearized_BL}
    $\{p^{k,l}_{\epsilon}\}$ is a $BL_\infty$ algebra structure on $V$ and $p^{k,0}_{\epsilon}=0$ for all $k>0$.
\end{proposition}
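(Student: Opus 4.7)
The plan is to prove both claims via the tree calculus of \S \ref{ss:tree}, exploiting two fundamental identities: the $BL_\infty$ relation $\widehat{p}\circ\widehat{p}=0$ and the augmentation relation $\widehat{\epsilon}\circ\widehat{p}=0$, the latter holding because $\epsilon: V\to \mathbf{0}$ is a $BL_\infty$ morphism with $\widehat{p}_{\mathbf{0}}=0$. Well-definedness of each $p^{k,l}_\epsilon$ is routine: for fixed $(k,l)$, only finitely many tree shapes with one $p^{a,b}$-node and several $\epsilon^{c_i}$-nodes satisfy the required boundary conditions, so the sum is finite under the standing finiteness assumptions on $\widehat{p}$ and $\widehat{\epsilon}$.

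For the vanishing $p^{k,0}_\epsilon = 0$, the key step is to identify $p^{k,0}_\epsilon$ with a specific matrix coefficient of the augmentation identity. Consider the polarization lift $\iota: S^kV \hookrightarrow S^kSV\subset EV$ sending $v_1\cdots v_k$ to $v_1\odot\cdots\odot v_k$, so that a $k$-leaf input becomes a forest of $k$ single-leaf trees. Under $\iota$, the operator $\widehat{p}^a$ on $S^kSV$ merges $a$ single-leaf trees into one via a $p^{a,b}$-node, reproducing the internal $p$-structure of a $p^{k,0}_\epsilon$-tree. The subsequent $\widehat{\epsilon}$ places $\epsilon^{c_i}$-nodes on the $k-a+1$ resulting trees and lands in the $S^1\mathbf{0}\subset E\mathbf{0}$-summand precisely when all $\epsilon$'s and the $p^{a,b}$-node assemble into a single connected graph---exactly the tree-connectivity requirement for a $p^{k,0}_\epsilon$-contribution. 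Hence $p^{k,0}_\epsilon$ equals the $S^1\mathbf{0}$-component of $\widehat{\epsilon}\circ\widehat{p}\circ\iota$, which vanishes by $\widehat{\epsilon}\widehat{p}=0$.

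For $\widehat{p}_\epsilon\circ\widehat{p}_\epsilon=0$, expand the composition as a sum over connected trees with two $p$-nodes and several $\epsilon$-nodes in stacked configuration. Partition these trees into two topological classes according to the path between the two $p$-nodes. In the first class the path contains no $\epsilon$-node, so one $p$ directly composes with the other (possibly through identity edges); the sub-sum collapses to a component of $\widehat{p}\circ\widehat{p}$ dressed with peripheral $\epsilon$-caps, and vanishes by $\widehat{p}^2=0$. In the second class at least one $\epsilon$-node separates the two $p$'s; the sub-sum collapses to a composition involving the outer $p$ and a $\widehat{\epsilon}\circ\widehat{p}$-component for the inner $p$, and vanishes by the augmentation identity. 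Each term of $\widehat{p}_\epsilon^2$ lies in exactly one class, and the Koszul--Quillen signs align thanks to the graphical rules of \S \ref{ss:tree}. The main obstacle is the combinatorial bookkeeping---identifying the classes, verifying that each sub-sum reduces to an instance of a fundamental identity, and tracking signs; since signs in the tree calculus are governed entirely by edge orderings, topologically identical diagrams inherit matching signs, making this step mechanical once the class decomposition is in place.
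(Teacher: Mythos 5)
Your argument for $p^{k,0}_\epsilon=0$ is correct and natural: identifying $p^{k,0}_\epsilon$ with the $S^1\mathbf{0}$-component of $\widehat{\epsilon}\circ\widehat{p}$ restricted to $\odot^kV\subset EV$ and invoking $\widehat{\epsilon}\circ\widehat{p}=\widehat{p}_{\mathbf{0}}\circ\widehat{\epsilon}=0$ is exactly where the augmentation hypothesis enters. However, for $\widehat{p}_\epsilon\circ\widehat{p}_\epsilon=0$ you take a genuinely different route from the paper, and your sketch has a gap. The paper (following \cite[\S 2.5]{MZ22}) does not verify the quadratic relations graph by graph: it exhibits $\widehat{p}_\epsilon$ as a conjugate $\widehat{F}_{\epsilon}\circ\widehat{p}\circ\widehat{F}_{-\epsilon}$ of $\widehat{p}$ by the explicit change of coordinates $F^{1,1}_{\epsilon}=\Id$, $F^{k,0}_{\epsilon}=\epsilon^k$, whose inverse is $\widehat{F}_{-\epsilon}$; then $\widehat{p}_\epsilon^2=\widehat{F}_{\epsilon}\circ\widehat{p}^2\circ\widehat{F}_{-\epsilon}=0$ is immediate. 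The combinatorial work is thereby concentrated in two reusable lemmas (invertibility of $\widehat{F}_\epsilon$, and the identification of the conjugate with the tree description of $p^{k,l}_\epsilon$) rather than in an ad hoc partition.

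The gap in your direct argument is the claim that each of your two classes collapses to a single fundamental identity. First, note that conjugation by $\widehat{F}_\epsilon$ makes sense for an \emph{arbitrary} family $\{\epsilon^k\}$, so $\widehat{p}_\epsilon^2=0$ holds without the augmentation hypothesis (only $p^{k,0}_\epsilon=0$ uses it); a decomposition in which the class-2 sub-sum "vanishes by the augmentation identity'' is therefore suspect. Indeed, in a class-2 configuration the separating $\epsilon$-node caps one output of each $p$-node (acyclicity forbids more), so the inner $p$-node together with its $\epsilon$-caps is \emph{not} a standalone instance of $\widehat{\epsilon}\circ\widehat{p}=0$ — it has a dangling edge to the outer $p$-node. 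What actually happens is that each class-2 glued graph admits exactly two admissible stackings (the separating $\epsilon$-node may be assigned to either level, reversing which $p$-node is applied first), and these cancel in pairs; this is a sign computation, not an application of the augmentation relation. Second, for class 1 the "dressing with peripheral $\epsilon$-caps'' does not commute naively with the quadratic relation $\widehat{p}^2=0$: acyclicity forces each $\epsilon$-node to cap exactly one output of exactly one of the two $p$-nodes, and which $p$-node that is varies with the two-level configuration, so one must check that for fixed cap data the compatible configurations really exhaust those in the relevant relation $p_2^{n,m}=0$. Finally, the terms in which the two composite trees are glued to disjoint leaves (or to the same input tree at different leaves) have no path between the $p$-nodes through the operator part and fall into neither of your classes; they require the separate reordering cancellation from $|\widehat{p}_\epsilon|=1$. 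None of these points is fatal, but they are precisely the content of the proposition, and your write-up asserts rather than establishes them; the conjugation argument avoids all three.
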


$\{p^{k,l}_{\epsilon}\}$ is $\{p^{k,l}\}$ under a change of coordinates in the $BL_\infty$ sense. Namely, we let $F^{1,1}_{\epsilon}=\Id_V$ and $F^{k,0}_{\epsilon}=\epsilon^{k}$ and all other $F^{k,l}_{\epsilon}=0$. The same recipe for constructing $\widehat{\phi}$ from $\phi^{k,l}$ applies to give a $\widehat{F}_{\epsilon}$ on $EV$. If $\widehat{F}_{-\epsilon}$ denotes the map on $EV$ defined by $F^{1,1}_{-\epsilon}=\Id_V$ and $F^{k,0}_{-\epsilon}=-\epsilon^{k,0}$ and all other $F^{k,l}_{-\epsilon}=0$, then $\widehat{F}_{-\epsilon}$ is the inverse of $\widehat{F}_{\epsilon}$. Then we have $\widehat{p}_{\epsilon}:=\widehat{F}_{\epsilon}\circ \widehat{p}\circ \widehat{F}_{-\epsilon}:EV \to EV$; see discussions before \cite[Proposition 2.18]{MZ22}.

\subsection{Pointed Maps} Let $p_\bullet^{k,l}:S^kV \to S^lV, k\ge 1, l\ge 0$ be a family of linear maps; we can define $\widehat{p}_\bullet$ just like $\widehat{p}$ by gluing trees with the modification that the grading $|p_{\bullet}^{k,l}|$ (independent of $k,l$) is not necessarily $1\in \Z/2$.

\begin{definition}[{\cite[Definition 2.19]{MZ22}}]\label{def:pointed}
	We say $\{p_\bullet^{k,l}\}$ is a pointed map iff $\widehat{p}_\bullet\circ \widehat{p}=(-1)^{|\widehat{p}_{\bullet}|}\widehat{p}\circ \widehat{p}_\bullet$.
\end{definition}
In the context of SFT, $p_{\bullet}$ comes from counting holomorphic curves in the symplectization with a marked point mapped to a closed submanifold/closed chain. The degree of $p_{\bullet}$ is the dimension of the constraint. In applications, we typically consider a point constraint and $|p_{\bullet}|=0$. If we have an augmentation $\epsilon$ to $p$, we then define $$\widehat{p}_{\bullet,\epsilon}:=\widehat{F}_{\epsilon}\circ \widehat{p}_{\bullet}\circ \widehat{F}_{-\epsilon}.$$ This map is determined by the $p^{k,l}_{\bullet,\epsilon}$, which is defined similarly to $p^{k,l}_{\epsilon}$. We also have $\widehat{p}_{\epsilon}\circ \widehat{p}_{\bullet,\epsilon}=\widehat{p}_{\bullet,\epsilon}\circ \widehat{p}_{\epsilon}$. However, it is not necessarily true that $p^{k,0}_{\bullet,\epsilon}=0$. We let $\ell_{\bullet,\epsilon}^{k,0}:=p^{k,0}_{\bullet,\epsilon}$. Then $\widehat{\ell}_{\bullet,\epsilon}:=\sum_{k\ge 0} \ell_{\bullet,\epsilon}^{k,0}$ defines a chain morphism $(\overline{S} V, \widehat{\ell}_{\epsilon}) \to \bk$, where $\widehat{\ell}_{\epsilon}$ is the restriction of $\widehat{p}_{\epsilon}$ on $\overline{S} V \cong \bigoplus_{i=1}^{\infty} \odot^iV \subset EV$ then composed with the projection to $\bigoplus_{i=1}^{\infty} \odot^iV$. In other words, $\widehat{\ell}_{\epsilon}$ is determined by $\ell^k_{\epsilon}:=p^{k,1}_{\epsilon}$ by a similar recipe to $\widehat{p}$, i.e., the differential on the reduced bar complex of the $L_\infty$ algebra $\ell^k_{\epsilon}$ on $V[-1]$; see \cite[\S 2.1]{MZ22}.

\begin{definition}[{\cite[Definition 2.21]{MZ22}}]\label{def:order}
		Given a $BL_{\infty}$ augmentation and a pointed morphism $p_{\bullet}$, the \textbf{$(\epsilon,p_{\bullet})$-order} of $V$ is defined to be
		$$O(V,\epsilon,p_{\bullet}):=\min \left\{k\left|1\in \Ima \widehat{\ell}_{\bullet,\epsilon}|_{H_*(\overline{B}^k V, \widehat{\ell}_{\epsilon})}\right. \right\},$$
		where the minimum of an empty set is defined to be $\infty$.
\end{definition}

To discuss the functoriality of the order above, we need to introduce the notion of morphisms between pointed maps. Given a $BL_\infty$ morphism $\phi:(V,p) \to (V',q)$ and a family of morphisms $\phi^{k,l}_{\bullet}:S^k V \to S^l V^\prime$, we can define $\widehat{\phi}_{\bullet}:EV\to EV'$ by the same rule as $\widehat{\phi}$ with exactly one $\phi^{k,l}_{\bullet}$ component and all the others being $\phi^{k,l}$ components.

\begin{definition}[{\cite[Definition 2.22]{MZ22}}]\label{def:compatible}
Assume $\{p^{k,l}_{\bullet}: S^kV \rightarrow S^lV\}_{k,l}, \{q^{k,l}_{\bullet}:S^kV'\rightarrow S^lV'\}_{k,l}$ are two pointed maps on $(V,p),(V',q)$ respectively, of the same degree. We say $p_{\bullet},q_{\bullet},\phi$ are compatible if there is a family of $\phi^{k,l}_{\bullet}$ such that $\widehat{q}_\bullet\circ \widehat{\phi}-(-1)^{|\widehat{q}_{\bullet}|}\widehat{\phi}\circ \widehat{p}_{\bullet}=\widehat{q}\circ \widehat{\phi}_{\bullet}-(-1)^{|\widehat{\phi}_{\bullet}|}\widehat{\phi}_{\bullet}\circ \widehat{p}$ and $|\widehat{\phi}_{\bullet}|=|\widehat{p}_{\bullet}|+1$.
\end{definition}

The following proposition is the functoriality of the $(\epsilon,p_{\bullet})$-order.
\begin{proposition}[{\cite[Proposition 2.24]{MZ22}}]\label{prop:order}
	Assume $\phi$ is a $BL_{\infty}$-morphism from $(V,p)$ to $(V',q)$ with pointed maps $p_{\bullet},q_{\bullet}$ of degree $0$ respectively, such that $p_{\bullet},q_{\bullet},\phi$ are compatible. Then for any $BL_{\infty}$ augmentation $\epsilon$ of $V'$, we have $O(V,\epsilon\circ \phi,p_{\bullet})\ge O(V',\epsilon,q_{\bullet})$.
\end{proposition}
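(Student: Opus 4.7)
The plan is to pass to the augmented structures and derive, from the compatibility relation, a chain-level homotopy identity that forces the order inequality. Set $\tilde\epsilon:=\epsilon\circ\phi$, an augmentation of $V$, and define $\widehat\phi_\epsilon:=\widehat F_\epsilon\circ\widehat\phi\circ\widehat F_{-\tilde\epsilon}$ and $\widehat\phi_{\bullet,\epsilon}:=\widehat F_\epsilon\circ\widehat\phi_\bullet\circ\widehat F_{-\tilde\epsilon}$. The defining relation $\tilde\epsilon=\epsilon\circ\phi$ forces $\phi_\epsilon^{k,0}=0$ for all $k\ge 1$, mirroring $p_\epsilon^{k,0}=0$ for the augmented $BL_\infty$ structure. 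Consequently, the components $\phi_\epsilon^{k,1}$ assemble into an $L_\infty$ morphism $(V,\ell_{\tilde\epsilon})\to(V',\ell_\epsilon)$ and induce a chain map $\widehat\phi_\epsilon:(\overline{S}V,\widehat\ell_{\tilde\epsilon})\to(\overline{S}V',\widehat\ell_\epsilon)$ on reduced bar complexes preserving the filtration $\overline{S}^{k}V$.

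Conjugating the compatibility relation of Definition \ref{def:compatible} by $\widehat F_{\pm\epsilon},\widehat F_{\pm\tilde\epsilon}$ gives, using $|\widehat p_\bullet|=|\widehat q_\bullet|=0$ and $|\widehat\phi_\bullet|=1$,
$$\widehat q_{\bullet,\epsilon}\circ\widehat\phi_\epsilon-\widehat\phi_\epsilon\circ\widehat p_{\bullet,\tilde\epsilon}=\widehat q_\epsilon\circ\widehat\phi_{\bullet,\epsilon}+\widehat\phi_{\bullet,\epsilon}\circ\widehat p_{\tilde\epsilon}.$$
Restricting input to $\overline{S}V\subset EV$ and projecting output to $\bk$, the tree calculus of \S\ref{ss:tree} collapses this identity to
$$\widehat\ell_{\bullet,\epsilon}\circ\widehat\phi_\epsilon-\widehat\ell_{\bullet,\tilde\epsilon}=\widehat\ell^\phi_{\bullet,\epsilon}\circ\widehat\ell_{\tilde\epsilon}\quad\text{on }\overline{S}V,$$
where $\widehat\ell^\phi_{\bullet,\epsilon}:\overline{S}V\to\bk$ is built from the $\phi_{\bullet,\epsilon}^{k,0}$ components, analogous to how $\widehat\ell_{\bullet,\epsilon}$ is built from the $q_{\bullet,\epsilon}^{k,0}$ components. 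The collapse uses: $\widehat q_\epsilon\circ\widehat\phi_{\bullet,\epsilon}$ vanishes under projection since $q_\epsilon^{k,0}=0$; and $\widehat\phi_\epsilon$ acts as the identity on $\bk\subset EV$ while $\phi_\epsilon^{k,0}=0$ eliminates any other route from $\overline{S}V$ to $\bk$ via $\widehat\phi_\epsilon$, so $\widehat\phi_\epsilon\circ\widehat p_{\bullet,\tilde\epsilon}$ collapses to $\widehat\ell_{\bullet,\tilde\epsilon}$.

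For the conclusion, given $k=O(V,\tilde\epsilon,p_\bullet)$, pick a cycle $x\in\overline{S}^{k}V$ with $\widehat\ell_{\bullet,\tilde\epsilon}(x)=1$. Filtration preservation of $\widehat\phi_\epsilon$ gives $\widehat\phi_\epsilon(x)\in\overline{S}^{k}V'$, and it is a $\widehat\ell_\epsilon$-cycle since $\widehat\phi_\epsilon$ is a chain map. The identity applied to $x$ then yields $\widehat\ell_{\bullet,\epsilon}(\widehat\phi_\epsilon(x))=1$, whence $O(V',\epsilon,q_\bullet)\le k$. The principal technical obstacle is sign bookkeeping: verifying the conjugated compatibility relation and then confirming that, after restriction to $\overline{S}V$ and projection to $\bk$, the four terms reduce precisely as claimed, with all other contributions killed by the tree-calculus vanishings above. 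The graphical language of \S\ref{ss:tree} makes these vanishings visually transparent, but the Koszul-Quillen signs from Definitions \ref{def:BL}, \ref{def:morphism} and \ref{def:compatible} still require careful tracking.
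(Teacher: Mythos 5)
Your proof is correct and is essentially the argument behind \cite[Proposition 2.20]{MZ22}, which this paper cites without reproving: conjugate the compatibility relation by $\widehat F_{\pm\epsilon}$ and $\widehat F_{\pm(\epsilon\circ\phi)}$, then restrict to $\bigoplus_i \odot^i V$ and project to $\bk$, so that a cycle detecting the order upstairs pushes forward to one downstairs. The one point worth isolating among your ``tree-calculus vanishings'' is that both the chain-map property of $\widehat\phi_\epsilon$ on the reduced bar complexes and the collapse of $\pi_{\bk}\circ\widehat\phi_{\bullet,\epsilon}\circ\widehat p_{\tilde\epsilon}$ to $\widehat\ell^\phi_{\bullet,\epsilon}\circ\widehat\ell_{\tilde\epsilon}$ hinge on the acyclicity requirement in the gluing defining $\widehat\phi$ and $\widehat\phi_\bullet$: a word of length at least $2$ must distribute its letters to at least two distinct $\phi$-components, so (given $\phi^{k,0}_\epsilon=0$) its image always retains a word of length at least $2$ and can never contribute to the $\bk$-component --- without this the error terms you discard would not vanish.
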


\begin{remark}
    In view of the discussion above, from the perspective of $(\epsilon,p_{\bullet})$-order, we should view $(p,p_{\bullet})$ as an object, i.e., $p_{\bullet}$ is an additional structure on the $BL_\infty$ algebra and $(\phi,\phi_{\bullet})$ as a morphism. This is the reason why we call $p_{\bullet}$ a pointed map instead of a morphism, besides the different combinatorial nature compared to a $BL_\infty$ morphism.
\end{remark}

\subsection{Completions and Twisted Coefficients}
\begin{definition}
    We define the Novikov ring $\Lambda_0$ over a field $\bk$ as
    $$\Lambda_0=\left\{ \sum_{i=1}^\infty a_i T^{\lambda_i}\left| a_i\in \bk, \lambda_i\ge 0, \displaystyle \lim_{i\rightarrow \infty} \lambda_i = +\infty \right.\right\},$$
    and the Novikov field $\Lambda$ as
    $$\Lambda=\left\{ \sum_{i=1}^\infty a_i T^{\lambda_i}\left| a_i\in \bk, \displaystyle \lim_{i\to \infty} \lambda_i = +\infty \right.\right\}.$$
    Let $\omega:G\to \R $ be a group homomorphism. The Novikov completion $\overline{\bk[G]}$ of the group ring $\bk[G]$ is
    $$\overline{\bk[G]}=\left\{ \sum_{i=1}^\infty a_i T^{g_i}\left| a_i\in \bk, \displaystyle \lim_{i\to \infty} \omega(g_i) = +\infty \right.\right\}.$$
    These coefficient rings are equipped with a decreasing filtration and are complete with respect to the filtration.
\end{definition}
\begin{remark}
    For applications in this paper, we will not use $\overline{\bm{k}[G]}$. In the context of SFT, this coefficient is needed for stable Hamiltonian structures and weak fillings, where $G$ is the second homology group. See \cite[p.624]{EGH} for examples.
\end{remark}

Now let $V$ be a $\bk$-vector space with filtration degree $0$; then $V\otimes_{\bk} \Lambda_0$ and $V\otimes_{\bk} \Lambda$ have induced decreasing filtrations, and so do the base changes for $SV$ and $EV$. We use $\overline{V\otimes_{\bk} \Lambda_0}$, $\overline{SV\otimes_{\bk} \Lambda_0}$, etc., to denote the corresponding completions. For example,
$$\overline{V\otimes_{\bk}\Lambda_0}=\left\{ \sum_{i=1}^{\infty} v_iT^{\lambda_i}\left|v_i\in V,\lambda_i\ge 0, \lim_{i\to \infty} \lambda_i=+\infty \right. \right\}.$$
The filtration on $\overline{V\otimes_{\bm{k}}\Lambda_0}$ is given by
$$(\overline{V\otimes_{\bk}\Lambda_0})_{\rho}=\left\{ \sum_{i=1}^{\infty} v_iT^{\lambda_i}\left|v_i\in V,\lambda_i\ge \max\{\rho,0\}, \lim_{i\to \infty} \lambda_i=+\infty \right. \right\},$$
so that
$$(\overline{V\otimes_{\bk}\Lambda_0})_{\rho} \subset (\overline{V\otimes_{\bk}\Lambda_0})_{\rho'}, \text{ when } \rho\ge \rho'.$$

\begin{definition}\label{def:complete}
A filtered completed $BL_\infty$ algebra structure on $V$ over $R=\Lambda_0,\Lambda$ consists of maps $p^{k,l}:\overline{S^kV\otimes_{\bk} R} \to \overline{S^lV\otimes_{\bk} R}$
for $k\ge 1, l\ge 0$, such that the assembled map $\widehat{p}$ is well-defined on $\overline{EV\otimes_{\bk} R}$, has degree $1$, squares to zero, and preserves the filtration. The filtered completed versions for morphisms, augmentations, pointed maps, and compatible pointed morphisms are similar.
\end{definition}

\begin{remark}
    In the context of SFT, $p^{k,l}$ is defined for both $R=\Lambda_0$ and $\Lambda$ for a fixed contact form and auxiliary choices from the virtual setup; see \S \ref{ss:RSFT}. When $R=\Lambda_0$, the $BL_\infty$ algebra is only expected to give invariants of the contact form, not the contact structure, as a trivial cobordism interpolating between two contact forms does not necessarily give an isomorphism over $\Lambda_0$. The theory over $\Lambda_0$ can be used for quantitative questions like embedding problems and symplectic capacities. On the other hand, when $R=\Lambda$, the $BL_\infty$ algebra is expected to give invariants of the contact structure.
\end{remark}

\subsection{Maurer-Cartan Elements}\label{ss:MC_Alg}
Although one can define Maurer-Cartan elements for uncompleted algebras, they typically arise in the completed theory in the context of SFT, where they are defined by counting holomorphic curves without positive punctures in a strong cobordism. That they are defined in the completed theory is a consequence of SFT compactness \cite{bourgeois2003compactness}, coupled with a perturbation scheme such as \cite{pardon2019contact} (which we use in this article).

\begin{definition}
    A Maurer-Cartan element of a filtered completed $BL_\infty$ algebra is an element $\mathfrak{mc}$ of degree $0$ with a positive filtration degree in $\overline{SV\otimes_{\bk}R}$, such that $\widehat{p}(e^\mathfrak{mc}-1)=0$, where $$e^{\mathfrak{mc}}=\sum_{i=0}^{\infty} \frac{\odot^i \mathfrak{mc}}{i!} \in \overline{SSV\otimes_{\bk}R}=\overline{R}\oplus \overline{EV\otimes_{\bk}R}.$$ It is clear that $e^\mathfrak{mc}-1$ is the projection of $e^{\mc}$ to $\overline{EV\otimes_{\bk}R}$.
\end{definition}
Given $a\in \overline{SV\otimes_{\bk}R}$ of degree $0$ with a positive filtration degree, we define
$$\exp_a:\overline{EV\otimes_{\bk}R}\to \overline{EV\otimes_{\bk}R}, \qquad x\mapsto x\odot e^a,$$
which preserves the filtration.

Note that for $x\in R$, we have $\widehat{p}(e^x-1)=0$. Therefore for any Maurer-Cartan element $\mathfrak{mc}$ and $x\in R$, we have $\mathfrak{mc}+x$ is also a Maurer-Cartan element. This is because $$\widehat{p}(e^{\mathfrak{mc}+x}-1) = \widehat{p}(e^{\mathfrak{mc}}\odot e^x -1) = \widehat{p}(e^{\mathfrak{mc}}-1)\odot e^x=0.$$
Note here that $\widehat{p}(a\odot b) = \widehat{p}(a)\odot b+(-1)^{|a|}a\odot \widehat{p}(b)$ does not hold in general; see Lemma \ref{lem:pmc}. In particular, we can always modify a Maurer-Cartan element to have no constant term, i.e., to lie in $\overline{\overline{S}V\otimes_{\bk}R}$. Such a Maurer-Cartan element is denoted by $\overline{\mathfrak{mc}}$. However, it is helpful to allow the definition to include a constant term, as a $BL_\infty$ morphism can map a Maurer-Cartan element with no constant term to a Maurer-Cartan element with a constant term; see \Cref{eqn:exp} below. Geometrically, this corresponds to gluing $J$-holomorphic curves with only negative punctures and $J$-holomorphic curves with only positive punctures (both can have multiple punctures), hence encoding information of Gromov-Witten invariants.

We can represent an element $x$ of degree $0$ of $\overline{SV\otimes_{\bk}R}$ by a sum of labeled trees, i.e., $x=\sum T_k$ for $T_k\in \overline{S^kV\otimes_{\bk}R}$
\begin{center}
\begin{tikzpicture}
		\node at (6,0) [circle,fill,inner sep=1.5pt] {};
		\node at (7,0) [circle,fill,inner sep=1.5pt] {};
		\draw (6,0) to (6.5,1) to (7,0);
  	\node at (6.5,1) [circle,fill,inner sep=3pt] {};
\end{tikzpicture}
\end{center}
Here we use a notation similar to a $BL_\infty$ morphism $\phi$, i.e., trees with $\tikz\draw[black,fill=black] (0,-1) circle (0.4em);$ representing the root, instead of trees with undotted roots in the description of elements in $SV$, since Maurer-Cartan elements behave like a ``morphism" from the trivial $BL_\infty$ algebra. We can represent the constant term of $\mathfrak{mc}$ by a tree without leaves, i.e., a single root. With such notation, $e^x$ in $\overline{EV\otimes_{\bk}R}$ is represented by linear combinations of forests generated by the labeled trees. As each component of $\mc$ has degree $0$, forests up to switching order are equivalent.
\begin{center}
\begin{tikzpicture}
		\node at (0,0) [circle,fill,inner sep=1.5pt] {};
		\node at (1,0) [circle,fill,inner sep=1.5pt] {};
		\node at (2,0) [circle,fill,inner sep=1.5pt] {};
		\node at (3,0) [circle,fill,inner sep=1.5pt] {};
		\node at (4,0) [circle,fill,inner sep=1.5pt] {};
		\node at (5,0) [circle,fill,inner sep=1.5pt] {};
		\node at (6,0) [circle,fill,inner sep=1.5pt] {};
		\node at (7,0) [circle,fill,inner sep=1.5pt] {};
    	\node at (6.5,1) [circle,fill,inner sep=3pt] {};
       \node at (1,1) [circle,fill,inner sep=3pt] {};
        \node at (4,1) [circle,fill,inner sep=3pt] {};
		
		\draw (0,0) to (1,1) to (1,0);
		\draw (1,1) to (2,0);
		\draw (3,0) to (4,1) to (4,0);
		\draw (4,1) to (5,0);
		\draw (6,0) to (6.5,1) to (7,0);
\end{tikzpicture}
\end{center}
Here the coefficient of a forest $\{\underbrace{T_{a_1},\ldots,T_{a_1}}_{i_1}, \ldots, \underbrace{T_{a_m},\ldots,T_{a_m}}_{i_m}\}$ is given by the ``isotropy order" $\frac{1}{(i_1)!\ldots (i_m)!}$. Given a $BL_\infty$ morphism $\phi$, then we can define $\phi(x)$ by gluing $\widehat{\phi}$ to the forest above (with the above coefficient) to get a tree, i.e.,
\begin{center}
		\begin{tikzpicture}
		\node at (0,0) [circle,fill,inner sep=1.5pt] {};
		\node at (1,0) [circle,fill,inner sep=1.5pt] {};
		\node at (2,0) [circle,fill,inner sep=1.5pt] {};
		\node at (3,0) [circle,fill,inner sep=1.5pt] {};
		\node at (4,0) [circle,fill,inner sep=1.5pt] {};
		\node at (5,0) [circle,fill,inner sep=1.5pt] {};
		\node at (6,0) [circle,fill,inner sep=1.5pt] {};
		\node at (7,0) [circle,fill,inner sep=1.5pt] {};
    	\node at (6.5,1) [circle,fill,inner sep=3pt] {};
       \node at (1,1) [circle,fill,inner sep=3pt] {};
        \node at (4,1) [circle,fill,inner sep=3pt] {};
		
		\draw (0,0) to (1,1) to (1,0);
		\draw (1,1) to (2,0);
		\draw (3,0) to (4,1) to (4,0);
		\draw (4,1) to (5,0);
		\draw (6,0) to (6.5,1) to (7,0);

		\draw (2,0) to (2.5,-1) to (3,0);
		\draw (2,-2) to (2.5,-1) to (2.5,-2);
		\draw (2.5,-1) to (3,-2);
		\node at (2.5,-1) [circle, fill, draw, outer sep=0pt, inner sep=3 pt] {};
		
		\node at (0,-2) [circle,fill,inner sep=1.5pt] {};
		\node at (1,-2) [circle,fill,inner sep=1.5pt] {};
		\node at (2,-2) [circle,fill,inner sep=1.5pt] {};
		\node at (3,-2) [circle,fill,inner sep=1.5pt] {};
		\node at (4,-2) [circle,fill,inner sep=1.5pt] {};
		\node at (2.5,-2) [circle,fill,inner sep=1.5pt] {};
	
		\draw (0,0) to (0,-2);
		\draw (1,0) to (1,-2);
		\draw (4,0) to (4,-2);
		\draw (5,0) to (5.5,-1);
		\draw (5.5,-1) to (6,0);
		\draw (7,0) to (7,-1);
		
		\node at (0,-1) [circle, fill, draw, outer sep=0pt, inner sep=3 pt] {};
		\node at (1,-1) [circle, fill, draw, outer sep=0pt, inner sep=3 pt] {};
		\node at (4,-1) [circle, fill, draw, outer sep=0pt, inner sep=3 pt] {};
		\node at (5.5,-1) [circle, fill, draw, outer sep=0pt, inner sep=3 pt] {};
		\node at (7,-1) [circle, fill, draw, outer sep=0pt, inner sep=3 pt] {};
	\end{tikzpicture}
\end{center}
\begin{lemma}\label{lem:phimc}
     For an element $x$ of degree $0$ with a positive filtration degree in $\overline{SV\otimes_{\bk}R}$ and $\phi$ a filtered completed $BL_\infty$ morphism, we have
     \begin{equation}\label{eqn:exp}
       e^{\phi(x)} = \widehat{\phi}(e^x).
      \end{equation}
\end{lemma}

In fact, the above holds for any $\{\phi^{k,l}\}$, not necessarily $BL_\infty$ morphisms, as long as $\widehat{\phi}$ is filtered and defined.

\begin{proof}
  It is clear that both sides of \Cref{eqn:exp} count the same type of forests; it suffices to verify the coefficients match. We can look at the coefficient from $T_k$. Assume the glued forest has $m_i$ identically labeled trees, each involving $n_i$ trees of $T_k$ for $1\le i\le p$. Then from the $e^{\phi(x)}$ side, the coefficient from $T_k$ is
  $$\prod_{i=1}^p \frac{1}{m_i!} \left(\frac{1}{n_i!}\right)^{m_i}.$$
  From the $\widehat{\phi}(e^x)$ side, the coefficient from $T_k$ is
  $$\frac{C}{\displaystyle\left(\sum_{i=1}^p m_in_i\right)!}$$
  where $C$ is the number of ways to write $(1,\ldots,\sum_{i=1}^p m_in_i)$ as $p$ \emph{ordered} subsets with $m_in_i$ elements for $1\le i \le p$, such that each subset is a union of $m_i$ subsets of $n_i$ elements. This number is the number of ways of grouping $T_k$ in $e^x$ so that the glued forest is the one above. Since $C$ is precisely
  $$\frac{\displaystyle\left(\sum_{i=1}^p m_in_i\right)!}{\displaystyle\prod_{i=1}^p m_i! (n_i!)^{m_i}}.$$
  The coefficient from $T_k$ then matches.
\end{proof}

As a consequence, given a Maurer-Cartan element $\mathfrak{mc}$, then $\phi(\mathfrak{mc})$ is again a Maurer-Cartan element. Since $\phi(c+x) = c+\phi(x)$ for $c\in R$, we have $\overline{\phi(\overline{\mathfrak{mc}})} = \overline{\phi(\mathfrak{mc})}$.

Given a Maurer-Cartan element $\mathfrak{mc}$, we have a deformed $BL_\infty$ structure $p^{k,l}_{\mathfrak{mc}}$ defined by trees from stacking the forests representing $e^{\mathfrak{mc}}-1$ over $\widehat{p}$, namely:
\begin{figure}[H]
\begin{center}
		\begin{tikzpicture}
		\node at (0,0) [circle,fill,inner sep=1.5pt] {};
		\node at (1,0) [circle,fill,inner sep=1.5pt] {};
		\node at (2,0) [circle,fill,inner sep=1.5pt] {};
		\node at (3,0) [circle,fill,inner sep=1.5pt] {};
		\node at (4,0) [circle,fill,inner sep=1.5pt] {};
		\node at (5,0) [circle,fill,inner sep=1.5pt] {};
       \node at (1,1) [circle,fill,inner sep=3pt] {};
        \node at (4,1) [circle,fill,inner sep=3pt] {};

		\draw (0,0) to (1,1) to (1,0);
		\draw (1,1) to (2,0);
		\draw (3,0) to (4,1) to (4,0);
		\draw (4,1) to (5,0);
        \draw (2.5,0) to (2.5,-1);
  
		\draw (2,0) to (2.5,-1) to (3,0);
		\draw (2,-2) to (2.5,-1) to (2.5,-2);
		\draw (2.5,-1) to (3,-2);
		\node at (2.5,-1) [circle, fill=white, draw, outer sep=0pt, inner sep=3 pt] {};
		
		\node at (0,-2) [circle,fill,inner sep=1.5pt] {};
	    \node at (1,-2) [circle,fill,inner sep=1.5pt] {};
	    \node at (2,-2) [circle,fill,inner sep=1.5pt] {};
	    \node at (3,-2) [circle,fill,inner sep=1.5pt] {};
	    \node at (4,-2) [circle,fill,inner sep=1.5pt] {};
	    \node at (5,-2) [circle,fill,inner sep=1.5pt] {};
	    \node at (2.5,-2) [circle,fill,inner sep=1.5pt] {};
	    \node at (2.5,0) [circle,fill,inner sep=1.5pt] {};
	    \draw[dashed] (0,0) to (0,-2);
		\draw[dashed] (1,0) to (1,-2);
		\draw[dashed] (4,0) to (4,-2);
		\draw[dashed] (5,0) to (5,-2);

		\end{tikzpicture}
\end{center}
    \caption{A component of $p^{1,7}_{\mathfrak{mc}}$}\label{fig:pmc}
\end{figure}
In a more formal way, we have
\begin{equation}\label{eqn:pmc}
    p_{\mathfrak{mc}}^{k,l}(v_1\ldots v_k) = \pi_{1,l}\circ\widehat{p}(v_1\odot \ldots \odot v_k\odot e^{\mathfrak{mc}})
\end{equation}
where $\pi_{1,l}$ is the completed version of the projection from $EV$ to $S^lV \subset SV \subset EV$. Since $\mathfrak{mc}$ has degree $0$, it does not matter where to put the Maurer-Cartan element in \Cref{fig:pmc}. The same construction holds for any $a\in \overline{SV\otimes_{\bk}R}$ of degree $0$, and we have the following.

\begin{lemma}\label{lem:pmc}
For $s\in \overline{EV\otimes_{\bk}R}$ and $a\in \overline{SV\otimes_{\bk}R}$ such that $\deg(a)=0$, we have $$\widehat{p}(s\odot e^a)=\widehat{p}_a(s)\odot e^a+(-1)^{|s|} s\odot \widehat{p}(e^a-1),$$
assuming that $s$ is of pure degree.
\end{lemma}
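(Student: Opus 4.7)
The plan is to verify the identity as an equality of sums of glued forests, reading both sides through the tree/forest dictionary of \S\ref{ss:tree}. By definition, $\widehat{p}$ applied to a forest amounts to choosing a nonempty subset of the trees, gluing them through a single $p^{k,l}$-vertex (one leaf per chosen tree), and leaving the remaining trees untouched on the right. I apply this to the forest $s \odot e^a$ and split cases by whether any of the trees fed into $p^{k,l}$ comes from $s$ rather than from the $e^a$ factor.

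First I would handle the case where at least one of the trees glued to the $p$-vertex comes from $s$. Fix the number $j\geq 0$ of $a$-trees that also enter the gluing, and let $i\geq j$ be the total number of $a$-trees drawn from the $i$th term $a^{\odot i}/i!$ of $e^a$. The $i-j$ leftover $a$-trees remain as a factor on the right. Choosing which $j$ of the $i$ $a$-copies feed into the vertex contributes a binomial coefficient, and the identity
\[
\sum_{i\geq j} \binom{i}{j}\frac{a^{\odot i}}{i!}
= \frac{a^{\odot j}}{j!}\odot e^a
\]
collapses the sum so that the $p$-vertex sees a free $e^a$ worth of $a$-copies alongside the chosen trees of $s$, with the remaining $a$-copies sitting on the right. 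Comparing with the definition \eqref{eqn:pmc} of $p^{k,l}_a$ and the summation over $j$ that defines $\widehat{p}_a(s)$, this case contributes exactly $\widehat{p}_a(s)\odot e^a$.

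Next I would handle the complementary case, in which the $p$-vertex is glued only to trees drawn from the $e^a$ factor, so that every tree of $s$ is untouched. Here I pull the untouched $s$-forest past the $p$-glued block; since $|\widehat{p}|=1$, the Koszul sign rule produces exactly $(-1)^{|s|}$. What remains on the right is $\widehat{p}$ applied to a subforest selected from $e^a$, where the selected subforest is nonempty (the $p$-vertex always receives at least one input). Subtracting the constant term from $e^a$ exactly enforces this nonemptiness, so the total contribution is $(-1)^{|s|}\,s\odot \widehat{p}(e^a-1)$. Summing the two cases yields the claimed formula.

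The main obstacle is the coefficient bookkeeping in the first case: one must see that the symmetry factors $1/i!$ built into $e^a$ conspire, via the binomial identity above, to reproduce the specific shape of $\widehat{p}_a$ in which the $p$-vertex is fed by $s$-inputs together with a free $e^a$-worth of $a$-copies while the rest of the $a$-copies sit unaffected on the right. This is the same combinatorial mechanism as in the proof of Lemma \ref{lem:phimc}, and no further subtleties arise beyond the single Koszul sign in the second case.
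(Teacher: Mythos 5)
Your proof is correct and follows essentially the same route as the paper's: both interpret $\widehat{p}(s\odot e^a)$ as a sum of glued forests and split according to whether the single $p^{k,l}$-vertex is glued to at least one tree of $s$ (yielding $\widehat{p}_a(s)\odot e^a$) or only to $a$-trees (yielding $(-1)^{|s|}\,s\odot\widehat{p}(e^a-1)$, with the Koszul sign from $|\widehat{p}|=1$). The only difference is that you spell out the binomial identity behind the coefficient matching, which the paper delegates to ``the same argument of \Cref{lem:phimc}.''
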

\begin{proof}
    The proof is again by examining that both sides correspond to the same linear combination of forests. The left-hand side can be interpreted as summing over the following forests (with coefficient from $e^a$) with the top level representing $s$.
    \begin{center}
		\begin{tikzpicture}
		\node at (0,0) [circle,fill,inner sep=1.5pt] {};
		\node at (1,0) [circle,fill,inner sep=1.5pt] {};
		\node at (2,0) [circle,fill,inner sep=1.5pt] {};
		\node at (3,0) [circle,fill,inner sep=1.5pt] {};
		\node at (4,0) [circle,fill,inner sep=1.5pt] {};
		\node at (5,0) [circle,fill,inner sep=1.5pt] {};
		\node at (6,0) [circle,fill,inner sep=1.5pt] {};
		\node at (7,0) [circle,fill,inner sep=1.5pt] {};
        \node at (6.5,1) [circle,fill,inner sep=3pt] {};

        \node at (0,1) [circle,fill,inner sep=1.5pt] {};
		\node at (1,1) [circle,fill,inner sep=1.5pt] {};
		\node at (2,1) [circle,fill,inner sep=1.5pt] {};
		\node at (4,1) [circle,fill,inner sep=3pt] {};

		\draw (0,1) to (1,2) to (1,1);
		\draw (1,2) to (2,1);
		\draw (3,0) to (4,1) to (4,0);
		\draw (4,1) to (5,0);
		\draw (6,0) to (6.5,1) to (7,0);

		\draw (2,0) to (2.5,-1) to (3,0);
		\draw (2,-2) to (2.5,-1) to (2.5,-2);
		\draw (2.5,-1) to (3,-2);
		\node at (2.5,-1) [circle, fill=white, draw, outer sep=0pt, inner sep=3 pt] {};
		
		\node at (0,-2) [circle,fill,inner sep=1.5pt] {};
	    \node at (1,-2) [circle,fill,inner sep=1.5pt] {};
	    \node at (2,-2) [circle,fill,inner sep=1.5pt] {};
	    \node at (3,-2) [circle,fill,inner sep=1.5pt] {};
	    \node at (4,-2) [circle,fill,inner sep=1.5pt] {};
	    \node at (5,-2) [circle,fill,inner sep=1.5pt] {};
	    \node at (6,-2) [circle,fill,inner sep=1.5pt] {};
	    \node at (7,-2) [circle,fill,inner sep=1.5pt] {};
	    \node at (2.5,-2) [circle,fill,inner sep=1.5pt] {};
	    
	    \draw[dashed] (0,1) to (0,-2);
		\draw[dashed] (1,1) to (1,-2);
  	\draw[dashed] (2,1) to (2,0);
		\draw[dashed] (4,0) to (4,-2);
		\draw[dashed] (5,0) to (5,-2);
		\draw[dashed] (6,0) to (6,-2);
		\draw[dashed] (7,0) to (7,-2);
   
		\end{tikzpicture}
	\end{center}
    While the same forest can be interpreted in another way. Namely, when the $p^{k,l}$ component is glued to the forest representing $s$, then it corresponds to a component of $\widehat{p}_a(s)\odot e^a$. And when the $p^{k,l}$ component is not glued to $s$, it corresponds to a component of $(-1)^{|s|}s\odot \widehat{p}(e^a-1)$.
    The coefficients match by the same argument as \Cref{lem:phimc}. The extra sign of the last term comes from the fact that $|\widehat{p}|=1$.
\end{proof}

Then $p^{k,l}_{\mathfrak{mc}}$ is the structural map under the change of coordinates by $\exp_{\mc}$. More precisely, we have the following.
\begin{proposition}
    For a Maurer-Cartan element $\mathfrak{mc}$ and a filtered completed $BL_\infty$ algebra $p^{k,l}$, we have $p^{k,l}_{\mathfrak{mc}}$ forms a filtered completed $BL_\infty$ algebra. Moreover, the deformed structure does not depend on the constant term of $\mathfrak{mc}$.
\end{proposition}
\begin{proof}
By \Cref{lem:pmc},
$p^{k,l}_{\mathfrak{mc}}(v_1,\ldots,v_k)= \pi_{1,l} \circ \exp_{-\mathfrak{mc}} \circ \widehat{p} \circ \exp_{\mathfrak{mc}}(v_1\odot \ldots \odot v_k)$ and $\widehat{p}_{\mc} = \exp_{-\mc}\circ \widehat{p}\circ \exp_{\mc}$.
Therefore $\widehat{p}_{\mathfrak{mc}}\circ \widehat{p}_{\mathfrak{mc}}(s)=\exp_{-\mathfrak{mc}}\circ\widehat{p}\circ\widehat{p}\circ \exp_{\mc}(s)=0$. That $p^{k,l}_{\mc}$ does not depend on the constant term of $\mc$ follows easily from definitions.
\end{proof}

Another instant corollary of \Cref{lem:pmc} is the following.
\begin{proposition}\label{prop:MC_r}
Let $\mathfrak{mc}$ be a Maurer-Cartan element; then $\exp_{\mc}$ defines a chain map from $(\overline{EV\otimes_{\bk}R}, \widehat{p}_{\mathfrak{mc}})$ to $(\overline{EV\otimes_{\bk}R}, \widehat{p})$.
 \end{proposition}
However, since $\exp_{\mc}$ does not preserve the sentence length filtration $\overline{E^kV\otimes_{\bk}R}\subset \overline{EV\otimes_{\bk}R}$, we cannot guarantee that the torsion of $p$ is smaller than that of $p_{\mathfrak{mc}}$.

\begin{definition}
    Let $p_{\bullet}$ be a filtered completed pointed map on a completed $BL_\infty$ algebra $(V,p)$ and $\mathfrak{mc}$ a Maurer-Cartan element. We say $\mathfrak{mc}_{\bullet}\in \overline{SV\otimes_{\bk}R}$ of degree $|p_{\bullet}|+1$ with a positive filtration degree is a pointed Maurer-Cartan element if $\widehat{p}_{\bullet}(e^{\mathfrak{mc}}-1) =\widehat{p}_{\mc}(\mc_{\bullet})$.
\end{definition}
We want to use $\exp_{\mc}$ as a change of coordinates to deform $\widehat{p}_{\bullet}$. The analog of \Cref{eqn:pmc} holds for $p_{\bullet}$ as well. However, $\widehat{p}_{\bullet}(e^{\mc}-1)$ may not vanish. We will explain below how a pointed Maurer-Cartan element can be used to correct this error in the deformed pointed map. In the context of SFT, when the Maurer-Cartan element arises from a strong cobordism $W$ and the pointed map is defined by counting holomorphic curves subject to a constraint in a closed chain $C$ in the negative boundary $\partial_-W$, the pointed Maurer-Cartan element is defined by counting holomorphic curves without positive puncture in the cobordism subject to a chain $D$ in $(W,\partial W)$ such that the boundary of $D$ in $\partial_-W$ is $C$; see \S \ref{ss:MC} for details.

\begin{proposition}\label{prop:MC}
    Let $\phi$ be a filtered completed $BL_\infty$ morphism from $(V,p)$ to $(V',q)$ and $\mathfrak{mc}$ a Maurer-Cartan element of $V$.
    \begin{enumerate}
        \item\label{m1} We have a deformed filtered completed $BL_\infty$ morphism $\phi_{\mathfrak{mc}}$ from $(V,p_{\mathfrak{mc}} = p_{\overline{\mathfrak{mc}}})$ to $(V',q_{\phi\circ \mathfrak{mc}}=q_{\overline{\phi\circ\mathfrak{mc}}})$.
        \item\label{m2} Given a filtered completed pointed map $p_{\bullet}$ for $V$ and a pointed Maurer-Cartan element $\mathfrak{mc}_{\bullet}$, we have a deformed filtered pointed map $p_{\bullet,\mathfrak{mc},\mathfrak{mc}_{\bullet}}$ for $p_{\mathfrak{mc}}$.
        \item\label{m3} If, in addition, we have a pointed map $q_{\bullet}$ for $V'$ compatible with $\phi$, then $p_{\bullet,\mathfrak{mc},\mathfrak{mc}_\bullet},q_{\bullet,\phi(\mathfrak{mc}), \mathfrak{mc}'_{\bullet}},\phi_{\mathfrak{mc}}$ are compatible, where $\mathfrak{mc}'_{\bullet}$ (which depends on $\phi,\phi_{\bullet},\mathfrak{mc},\mathfrak{mc}_{\bullet}$ and is defined in the proof) is a pointed Maurer-Cartan element.
    \end{enumerate}
\end{proposition}
\begin{proof}
     $\phi_{\mathfrak{mc}}$ is defined similarly to $p_{\mathfrak{mc}}$, i.e., by the following glued tree:
     \begin{figure}[H]
     \begin{center}
		\begin{tikzpicture}
		\node at (0,0) [circle,fill,inner sep=1.5pt] {};
		\node at (1,0) [circle,fill,inner sep=1.5pt] {};
		\node at (2,0) [circle,fill,inner sep=1.5pt] {};
		\node at (3,0) [circle,fill,inner sep=1.5pt] {};
        \node at (2.5,0) [circle,fill,inner sep=1.5pt] {};
		\node at (4,0) [circle,fill,inner sep=1.5pt] {};
		\node at (5,0) [circle,fill,inner sep=1.5pt] {};
		\node at (6,0) [circle,fill,inner sep=1.5pt] {};
		\node at (7,0) [circle,fill,inner sep=1.5pt] {};
    	\node at (6.5,1) [circle,fill,inner sep=3pt] {};
       \node at (1,1) [circle,fill,inner sep=3pt] {};
        \node at (4,1) [circle,fill,inner sep=3pt] {};
		
		\draw (0,0) to (1,1) to (1,0);
		\draw (1,1) to (2,0);
        \draw (2.5,0) to (2.5,-1);
		\draw (3,0) to (4,1) to (4,0);
		\draw (4,1) to (5,0);
		\draw (6,0) to (6.5,1) to (7,0);

		\draw (2,0) to (2.5,-1) to (3,0);
		\draw (2,-2) to (2.5,-1) to (2.5,-2);
		\draw (2.5,-1) to (3,-2);
		\node at (2.5,-1) [circle, fill, draw, outer sep=0pt, inner sep=3 pt] {};
		
		\node at (0,-2) [circle,fill,inner sep=1.5pt] {};
		\node at (1,-2) [circle,fill,inner sep=1.5pt] {};
		\node at (2,-2) [circle,fill,inner sep=1.5pt] {};
		\node at (3,-2) [circle,fill,inner sep=1.5pt] {};
		\node at (4,-2) [circle,fill,inner sep=1.5pt] {};
		\node at (2.5,-2) [circle,fill,inner sep=1.5pt] {};
	
		\draw (0,0) to (0,-2);
		\draw (1,0) to (1,-2);
		\draw (4,0) to (4,-2);
		\draw (5,0) to (5.5,-1);
		\draw (5.5,-1) to (6,0);
		\draw (7,0) to (7,-1);
		
		\node at (0,-1) [circle, fill, draw, outer sep=0pt, inner sep=3 pt] {};
		\node at (1,-1) [circle, fill, draw, outer sep=0pt, inner sep=3 pt] {};
		\node at (4,-1) [circle, fill, draw, outer sep=0pt, inner sep=3 pt] {};
		\node at (5.5,-1) [circle, fill, draw, outer sep=0pt, inner sep=3 pt] {};
		\node at (7,-1) [circle, fill, draw, outer sep=0pt, inner sep=3 pt] {};
	\end{tikzpicture}
\end{center}
\caption{A component of $\phi^{1,6}_{\mathfrak{mc}}$}
\end{figure}
They do not depend on the constant terms of $\mathfrak{mc}$. The formal definition is similar to \eqref{eqn:pmc}, i.e.,
$$\phi^{k,l}(v_1\ldots v_k)=\pi_{1,l}\circ \widehat{\phi}\circ \exp_{\mc}(v_1\odot \ldots \odot v_k).$$
Similar to \Cref{lem:pmc}, we have
$$\widehat{\phi}\circ \exp_{\mc}=\exp_{\phi\circ \mc}\circ \widehat{\phi}_{\mc}.$$
Here, since the composition with $\widehat{\phi}$ glues every vertex of the input forest, we do not have the extra term in \Cref{eqn:pmc}. Therefore we have
$$\widehat{\phi}_{\mc}=\exp_{-\phi\circ \mc} \circ \widehat{\phi}\circ \exp_{\mc}.$$
Hence from $\widehat{\phi}\circ \widehat{p}=\widehat{q}\circ \widehat{\phi}$ we have
$$\widehat{\phi}_{\mc}\circ \widehat{p}_{\mc}=\widehat{q}_{\phi\circ \mc}\circ \phi_{\mc},$$
that is, \eqref{m1} holds.

The deformed version of $p_{\bullet}$ is defined by trees from gluing $e^{\mc}-1$ to $\widehat{p}_{\bullet}$ and gluing $\mathfrak{mc}_{\bullet}\odot e^{\mc}$ to $\widehat{p}$. In terms of formula,
$$p^{k,l}_{\bullet,\mc,\mc_{\bullet}}(v_1\ldots v_k):=  \pi_{1,l}\left(p_{\bullet}\circ \exp_{\mc}(v_1\odot\ldots \odot v_k) -\widehat{p}\circ \exp_{\mc}(\mc_{\bullet}\odot v_1\odot \ldots \odot v_k)\right).$$
Similar to the proof of \Cref{lem:pmc}, we can get
$$\exp_{\mc}\circ \widehat{p}_{\bullet,\mc,\mc_{\bullet}}(s)=\widehat{p}_{\bullet}\circ \exp_{\mc}(s)-\widehat{p}\circ \exp_{\mc}(\mc_{\bullet}\odot s)+(-1)^{|\mc_{\bullet}|}\mc_{\bullet}\odot \widehat{p}\circ \exp_{\mc}(s)$$
Here $\widehat{p}\circ \exp_{\mc}(\mc_{\bullet}\odot s)$ counts forests of gluing $\widehat{p}$ to $\exp_{\mc}(\mc_{\bullet}\odot s)$. However, in $p^{k,l}_{\bullet,\mc,\mc_{\bullet}}$, we only count those with the $p$-component glued to the $\mc_{\bullet}$ component by definition. The extra term $(-1)^{|\mc_{\bullet}|}\mc_{\bullet}\odot \widehat{p}\circ \exp_{\mc}(s)$ is to compensate for the over-count. Therefore we have
\begin{eqnarray*}
\widehat{p}_{\mc}\circ \widehat{p}_{\bullet,\mc,\mc_{\bullet}}(s) & = & \exp_{-\mc}\circ \widehat{p}\circ \widehat{p}_{\bullet}\circ \exp_{\mc}(s) + (-1)^{|\mc_{\bullet}|}\exp_{-\mc}\circ \widehat{p}(\mc_{\bullet}\odot \widehat{p}\circ \exp_{\mc}(s)) \\
& = & (-1)^{|p_{\bullet}|} \exp_{-\mc}\circ \widehat{p}_{\bullet}\circ \widehat{p}\circ \exp_{\mc}(s) +(-1)^{|\mc_{\bullet}|}\exp_{-\mc}\circ \widehat{p}\circ \exp_{\mc}(\mc_{\bullet}\odot \widehat{p}_{\mc}(s))
\end{eqnarray*}
and
\begin{eqnarray*}
\widehat{p}_{\bullet,\mc,\mc_{\bullet}}\circ \widehat{p}_{\mc}(s) & = & \exp_{-\mc}\circ \widehat{p_{\bullet}}\circ \widehat{p}\circ \exp_{\mc}(s) -\exp_{-\mc}\circ \widehat{p}\circ \exp_{\mc}(\mc_{\bullet}\odot \widehat{p}_{\mc}(s)) \\
\end{eqnarray*}
As $|\mc_{\bullet}|=|p_{\bullet}|+1$, we have that $p_{\bullet,\mc,\mc_{\bullet}}$ is a pointed map of degree $|p_{\bullet}|$ for $p_{\mc}$. This proves \eqref{m2}.

The deformed version of $\phi_{\bullet}$ is defined by trees from gluing $e^{\mc}-1$ to $\widehat{\phi}_{\bullet}$ and gluing $\mathfrak{mc}_{\bullet}\odot e^{\mathfrak{mc}}$ to $\widehat{\phi}$; the pointed Maurer-Cartan element $\mathfrak{mc}'_{\bullet}$ is defined by trees (without input vertices) from gluing $e^{\mathfrak{mc}}-1$ and $\widehat{\phi}_{\bullet}$ and gluing $\mathfrak{mc}_{\bullet}\odot e^{\mathfrak{mc}}$ to $\widehat{\phi}$. In terms of formula, we have
$$\mc'_{\bullet} = \pi_1 (\widehat{\phi}_{\bullet}(e^{\mc})+\widehat{\phi}(\mc_{\bullet}\odot e^{\mc}))$$
where $\pi_1$ is the projection from $\overline{EV\otimes_{\bk}R}$ to $\overline{SV\otimes_{\bk}R}$, and the deformed version of $\phi_{\bullet}$ is defined by
$$\phi_{\bullet,\mc,\mc_{\bullet}}^{k,l}(v_1\ldots v_k)=\pi_{1,l}\left(\widehat{\phi}_{\bullet}\circ \exp_{\mc}(v_1\odot \ldots \odot v_k)+\widehat{\phi}\circ \exp_{\mc}(\mc_{\bullet}\odot v_1\odot \ldots \odot v_k)\right).$$
The desired algebraic properties follow from a similar argument as before.
\end{proof}

\begin{remark}
In addition to the structures explained above, we also have a Maurer-Cartan deformation theory associated to $L_\infty$ algebras \cite{zbMATH07557488}, which is naturally related to the $BL_\infty$ theory as $\{p^{k,1}\}_{k\ge 1}$ is an $L_\infty$ algebra (up to some grading shift; see \cite[\S 2.1]{MZ22}) whenever $p^{k,0}=0$ for all $k>0$, e.g., the linearized structure maps ${p^{k,l}_{\epsilon}}$ satisfy such conditions. If $\{\mc_k\}$ is a Maurer-Cartan element for such a $BL_\infty$ algebra, $\mc_1$ is a Maurer-Cartan element for the associated $L_\infty$ algebra. Such a theory is completely analogous to the discussion here and is simpler. The specialization of $BL_\infty$ algebras to $L_\infty$ algebras (along with pointed maps) is needed to obtain functorial properties like \Cref{prop:order}. However, analogous functorial properties in the presence of Maurer-Cartan elements will not be very useful to obtain \Cref{thm:planarity_strong,cor:divsor,cor:BO}, where we use more specific geometric properties instead; see \S \ref{s4} for more details. We omit the discussion of Maurer-Cartan deformation of $L_\infty$ algebras and their relations with the $BL_\infty$ version for simplicity.
\end{remark}
\section{Rational symplectic field theory}\label{s3}
Unless specified otherwise, $(Y,\alpha)$ is a strict compact contact manifold such that the contact form $\alpha$ is non-degenerate throughout this section. By an exact strict cobordism from $(Y_-,\alpha_-)$ to $(Y_+,\alpha_+)$, we mean a Liouville domain $(X,\lambda)$ such that $(Y_-,\alpha_-)$ is the concave boundary $(\partial_-X, \lambda)$ and  $(Y_+,\alpha_+)$ is the convex boundary $(\partial_+X, \lambda)$ as strict contact manifolds. For example, an ellipsoid is a strict contact manifold 
$$\left(\left\{ (z_i)_{1\le i \le n}\in \C^n\left|\displaystyle \sum_{i=1}^n \frac{|z_i|^2}{\pi a_i}=1\right.\right\}, \displaystyle\frac{\mathbf{i}}{4}\sum_{i=1}^n (z_i\rd \overline{z}_i-\overline{z}_i\rd z_i) \right).$$
The contact form is non-degenerate if and only if the $a_i$ are linearly independent over $\Q$. The solid ellipsoid
$$\left(\left\{ (z_i)_{1\le i \le n}\in \C^n\left|\displaystyle \sum_{i=1}^n \frac{|z_i|^2}{\pi a_i}\le 1\right.\right\}, \displaystyle\frac{\mathbf{i}}{4}\sum_{i=1}^n (z_i\rd \overline{z}_i-\overline{z}_i\rd z_i) \right)$$
is a strict cobordism from $\emptyset$ to the ellipsoid above.
\subsection{RSFT of contact manifolds}\label{ss:RSFT}
\subsubsection{RSFT as a $BL_\infty$ algebra}
To assign a $BL_\infty$ algebra to a strict contact manifold $(Y,\alpha)$, let $V_{\alpha}$ denote the free $\Q$-module generated by formal variables $q_\gamma$ for each (good) orbit $\gamma$ of $(Y,\alpha)$ and $\Lambda$ the Novikov field over $\Q$. We take the grading $\vert q_\gamma\vert=\mu_{CZ}(\gamma)+n-3$. We define $p^{k,l}:S^kV_\alpha\otimes_{\Q} \Lambda \rightarrow S^lV_\alpha\otimes_{\Q} \Lambda$ by
\begin{equation}\label{eqn:p}
p^{k,l}(q^{\Gamma^+})=\sum_{A,|[\Gamma^-]|=l} \# \overline{\cM}_{Y,A}(\Gamma^+,\Gamma^-)\frac{T^{\int_A \rd \alpha }}{\mu_{\Gamma^-}\kappa_{\Gamma^-}}q^{\Gamma^-}.
\end{equation}
where $\overline{\cM}_{Y,A}(\Gamma^+,\Gamma^-)$ denotes the SFT compactification of the moduli space of rational holomorphic curves in the symplectization $(\mathbb R\times Y, d(e^t\alpha))$, in the relative homology class $A\in H_2(Y,\Gamma^+\cup\Gamma^-;\mathbb Z)$, which are asymptotic at $\pm\infty$ to the sets of orbits $\Gamma^\pm$ (w.r.t.\ fixed parameterizations of Reeb orbits and asymptotic markers at punctures\footnote{This is the reason why we have $\frac{1}{\kappa_{\Gamma^{-}}}$ in \Cref{eqn:p}.}, the same applies to all moduli spaces below), modulo automorphisms of the domain, and $\R$ translation, see \cite[\S 3.2]{MZ22} for details. Here, the sum is over all classes $A$ and all multisets $[\Gamma]$, i.e.\ sets with duplicates, of size $l$, such that the expected dimension of $\overline{\cM}_{Y,A}(\Gamma^+,\Gamma^-)$  is zero. $\Gamma$ is an ordered representation of $[\Gamma]$, e.g.\ $\Gamma=\{\underbrace{\eta_1,\ldots,\eta_1}_{i_1}, \ldots, \underbrace{\eta_m,\ldots,\eta_m}_{i_m}\}$ is an ordered set of good orbits with $\eta_i\ne \eta_j$ for $i\ne j$ and $\sum i_j = l$. We write $\mu_{\Gamma}=i_1!\ldots i_m!$ and $\kappa_{\Gamma}=\kappa^{i_1}_{\eta_1}\ldots \kappa^{i_m}_{\eta_m}$ with $\kappa_{\eta_j}$ the multiplicity of $\eta_j$, and $q^{\Gamma}=q_{\eta_1} \ldots  q_{\eta_m}$. Using a suitable virtual machinery, the boundary configurations of the $1$-dimensional moduli spaces $\overline{\cM}_{Y,A}(\Gamma^+,\Gamma^-)$ make $p^{k,l}$ into a $BL_\infty$ algebra. 

\begin{definition} More precisely, there are three versions of $BL_\infty$ algebra determined by counting rational SFT curves:
\begin{enumerate}
    \item By setting $T=1$, $p^{k,l}$ defines a $BL_\infty$ algebra on the $\Q$-vector space $V_{\alpha}$, which was considered in \cite[\S 3]{MZ22}. We use  $\RSFT(Y)$ to denote this $BL_\infty$ algebra for simplicity.
    \item $p^{k,l}$ in \Cref{eqn:p} defines a $BL_\infty$ algebra on the $\Lambda$-vector space $V_{\alpha}\otimes_{\Q}\Lambda$, which we referred to as the elementary  version over $\Lambda$. We use $\RSFT_{\Lambda}(Y)$ to denote this $BL_\infty$ algebra.  
    \item Finally, $p^{k,l}$ defines a filtered completed $BL_\infty$ algebra as in \Cref{def:complete}. We use $\overline{\RSFT}(Y)$ to denote this $BL_\infty$ algebra.
\end{enumerate}
To be more precise, these $BL_\infty$ algebras depend on the choice of contact form $\alpha$ as well as auxiliary choices in the virtual machinery. However, the homotopy type thereof, with a proper definition, is expected to be independent of those extra choices. 
\end{definition}

Note that if $u\in \overline{\cM}_{Y,A}(\Gamma_+,\Gamma_-)$, then we have
$$0\leq \int u^*d\alpha = \sum_{\gamma\in \Gamma^+} \int\gamma^*\alpha - \sum_{\gamma \in \Gamma^-} \int\gamma^*\alpha.$$
As a consequence, \Cref{eqn:p} is a finite sum, hence $\overline{\RSFT}(Y)$ is defined. At the $BL_\infty$ algebra level, these three versions carry the same amount of information, as $\RSFT_{\Lambda}$ is a restriction of $\overline{\RSFT}$,  and by setting $T=1$ in $\RSFT_{\Lambda}$, we can get $\RSFT$. On the other hand, since the $\rd\alpha$ energy of a curve is determined by its asymptotics, we can recover $\RSFT_{\Lambda}$ and $\overline{\RSFT}$ from $\RSFT$ and periods of Reeb orbits. However, working with these different coefficients is necessary; for example, they may admit inequivalent sets of augmentations.

\subsubsection{Functoriality} Given a strict exact cobordism $X$ from $Y_-$ to $Y_+$, we define a $BL_{\infty}$ morphism from $\RSFT(Y_+)$ to $\RSFT(Y_-)$ (similarly for the other two versions as well) by (setting $T=1$)
\begin{equation}\label{eqn:phi}
\phi^{k,l}(q^{\Gamma^+})=\sum_{A,|[\Gamma^-]|=l}\#\overline{\cM}_{X,A}(\Gamma^+,\Gamma^-)\frac{T^{\int_A \rd \lambda}}{\mu_{\Gamma^-}\kappa_{\Gamma^-}}q^{\Gamma^-},
\end{equation}
where $|\Gamma^+|=k$. Here $\overline{\cM}_{X,A}(\Gamma^+,\Gamma^-)$ is the SFT compactification of the moduli space of rational holomorphic curves in the completion of $X$, with relative homology class $A\in H_2(X,\Gamma^+\cup\Gamma^-;\mathbb Z)$, which are asymptotic at $\pm\infty$ to the sets of orbits $\Gamma^\pm$, modulo automorphisms of the domain. Then the boundary of the $1$-dimensional moduli spaces $\overline{\cM}_{X,A}(\Gamma^+,\Gamma^-)$ implies that $\{\phi^{k,l}\}_{T=1}$ is a $BL_\infty$ morphism $\RSFT(Y_+)\to \RSFT(Y_-)$.

\subsubsection{Pointed morphisms} If we fix a point $o$ in $Y$, consider the compactified moduli space $\overline{\cM}_{Y,A,o}(\Gamma^+,\Gamma^-)$ of rational holomorphic curves with one interior marked point in the symplectization modulo automorphisms, where the marked point is required to be mapped to $(0,o)\in \R \times Y$, in homology class $A$ and with asymptotics $\Gamma^\pm$. We then define a pointed map $p_{\bullet_o}$ by
\begin{equation}\label{eqn:p.}
p^{k,l}_{\bullet_o}(q^{\Gamma^+})=\sum_{A,|[\Gamma^-]|=l} \# \overline{\cM}_{Y,A,o}(\Gamma^+,\Gamma^-)\frac{T^{\int_A \rd \alpha }}{\mu_{\Gamma^-}\kappa_{\Gamma^-}}q^{\Gamma^-}.
\end{equation}
Then the boundary of the $1$-dimensional moduli spaces $\overline{\cM}_{Y,A,o}(\Gamma^+,\Gamma^-)$ implies that $\{p^{k,l}_{\bullet_o}\}$ is a pointed map of degree $0$ for all three versions of rational SFT. 

For a strict exact cobordism $X$ from $Y_-$ to $Y_+$, we choose a path $\gamma$ in $X$ from $o_-\in Y_-$ to $o_+\in Y_+$. We complete the path $\gamma$ to a proper path $\widehat{\gamma}$ in the Liouville completion $\widehat{X}$ by constant paths (in $Y_{\pm}$) in the cylindrical ends. Then the pointed morphisms $p_{\bullet_{o_+}},p_{\bullet_{o_-}}$ determined by $o_-,o_+$ and the $BL_\infty$ morphism $\phi$ are compatible, with the map $\phi_{\bullet}$ given by
\begin{equation}\label{eqn:phi.}
\phi^{k,l}_{\bullet}(q^{\Gamma^+})=\sum_{A,|[\Gamma^-]|=l}\#\overline{\cM}_{X,A,\gamma}(\Gamma^+,\Gamma^-)\frac{T^{\int_A \rd \lambda}}{\mu_{\Gamma^-}\kappa_{\Gamma^-}}q^{\Gamma^-}.
\end{equation}
Here $\overline{\cM}_{X,A,\gamma}(\Gamma^+,\Gamma^-)$ is the moduli space of rational curves with one interior marked point in the completed cobordism $\widehat{X}$ modulo automorphism, in homology class $A$ and with asymptotics $\Gamma^\pm$, where the marked point is required to go through $\widehat{\gamma}$.

\subsection{Realizing RSFT using Pardons's VFC \cite{pardon2016algebraic,pardon2019contact}}

\begin{theorem}[{\cite[Theorem 3.11]{MZ22}}]\label{thm:vfc}
    Let $(Y,\alpha)$ be a strict contact manifold with a non-degenerate contact form, then we have the following.
\begin{enumerate}
	\item\label{BL:1} There exists a non-empty set of auxiliary data $\Theta$\footnote{The content of the $\Theta$ consists of a compatible almost complex structure, a system of thickening data used to construct implicit atlases \cite[Definition 3.9]{pardon2016algebraic}, and a diagram as in \cite[Example 3.32]{MZ22}, which can be viewed as the perturbation data in Pardon's VFC.}, such that for each $\theta \in \Theta$ we have a $BL_\infty$ algebra $p_{\theta}$ on $V_{\alpha}\otimes_{\Q}\Lambda$, which induces a $BL_\infty$ algebra on $V_{\alpha}$ by setting $T=1$ and a filtered completed $BL_\infty$ algebra.
	\item\label{BL:2} For any point $o\in Y$, there exists a set of auxiliary data $\Theta_o$ with a surjective map $\Theta_o\to \Theta$, such that for any $\theta_o\in \Theta_o$, we have a pointed map $p_{\bullet,\theta_o}$ for $p_{\theta}$, where $\theta$ is the image of $\theta_o$ in $\Theta_o\to \Theta$.
	\item\label{BL:3} Assume there is a strict exact cobordism $X$ from $(Y_-,\alpha_-)$ to $(Y_+,\alpha_+)$. Let $\Theta_-,\Theta_+$ be the sets of auxiliary data for $\alpha_-,\alpha_+$, then there exists a set of auxiliary data $\Xi$ with a surjective map $\Xi\to \Theta_- \times \Theta_+$, such that for $\xi\in \Xi$, there is a $BL_{\infty}$ morphism $\phi_{\xi}$ from $(V_{\alpha_+}\otimes_{\Q}\Lambda,p_{\theta_+})$ to $(V_{\alpha_-}\otimes_{\Q}\Lambda,p_{\theta_-})$, where $(\theta_-,\theta_+)$ is the image of $\xi$ under $\Xi\to \Theta_- \times \Theta_+$.
	\item\label{BL:4} Assume in addition that we fix a point $o_-\in Y_-$ that is in the same component of $o_+\in Y_+$ in $X$. Then for any compatible auxiliary data $\theta_-,\theta_+,\theta_{o_-},\theta_{o_+},\xi$, we have that $p_{\bullet,\theta_{o_-}},p_{\bullet,\theta_{o_+}},\phi_{\xi}$ are compatible.
	\item\label{BL:5} For compatible auxiliary data $\theta,\theta_o$, there exist compatible auxiliary data $k\theta,k\theta_o$ for $(Y,k\alpha)$ for $k\in \R_+$, such that $p_{k\theta},p_{\bullet, k\theta_o}$ are identified with $p_{\theta},p_{\bullet,\theta_o}$ by the canonical $\Lambda$-linear identification between $V_{k\alpha}\otimes_{\Q}\Lambda$ and $V_{\alpha}\otimes_{\Q}\Lambda$ defined by $q_{\gamma}\mapsto T^{(k-1)\int \gamma^*\alpha}q_{\gamma}$.
    \item\label{BL:6} A strong filling of $Y$ gives rise to a $BL_\infty$ augmentation for the completed version over $\Lambda$. 
    More generally, a strong semi-filling, i.e.\ a strong filling with $Y$ as a boundary component, gives rise to a $BL_\infty$ augmentation for the completed version over $\Lambda$. 
\end{enumerate}
\end{theorem}

\eqref{BL:6} of \Cref{thm:vfc} is a consequence of \eqref{BL:3} of \Cref{thm:vfc}. In the case of a semi-filling, as holomorphic curves with positive punctures asymptotic to Reeb orbits on $Y$ cannot approach other boundaries of the semi-filling by the maximum principle, this gives rise to augmentations just as in the case of a strong filling. 

In addition to the filtration on $\Lambda$, which induces filtrations on $\RSFT_{\Lambda}$ and $\overline{\RSFT}_{\Lambda}$, there is another closely related filtration called the weight filtration. 
\begin{definition}\label{def:weight}
For each generator $q_{\gamma}$ of $V_{\alpha}$, we define the weight $w(q_{\gamma})$ to be $\int \gamma^*\alpha$. For the formal variable $T$ in the Novikov field, we define the weight $w(T)$ to be $1$. From these, we can define the weight on  $SV_{\alpha}\otimes_{\Q}\Lambda$, $\overline{SV_{\alpha}\otimes_{\Q}\Lambda}$, as well as $EV_{\alpha}\otimes_{\Q}\Lambda$ and $\overline{EV\otimes_{\Q}\Lambda}$ by the following properties:
\begin{enumerate}
    \item $w(u\ast v)=w(u)+w(v), w(u\odot v) = w(u)+w(v)$.
    \item $w(\sum_{i=0}^\infty u_i) \ge \inf \{w(u_i)\}_{i\in \N}$, here the infinite sum is needed for completions, and we require for each $\rho$ there exists $N_\rho$ such that $u_i\in (\overline{SV_{\alpha}\otimes_{\Q}\Lambda})_{\rho}$ (or $(\overline{EV\otimes_{\Q}\Lambda})_{\rho}$ depending on the context) for $i>N_{\rho}$, hence the infinite sum converges in the completion.
\end{enumerate}
\end{definition}
To see that these two properties characterize the weight, the first property defines weights on monomials. Here a monomial in $SV_{\alpha}\otimes_{\Q}\Lambda$  or $\overline{SV_{\alpha}\otimes_{\Q}\Lambda}$ is $cT^aq^{\Gamma}$, where $\Gamma$ is a multiset of good Reeb orbits. A monomial in $EV_{\alpha}\otimes_{\Q}\Lambda$  or $\overline{EV_{\alpha}\otimes_{\Q}\Lambda}$ is $cT^aq^{\Gamma_1}\odot \ldots \odot q^{\Gamma_m}$, where $\Gamma_i$ is a multiset of good Reeb orbits. Then the second property implies that the weight of any linear combination of monomials is given by the minimal weight of the monomials. For completions, as $\int \gamma^*\alpha>0$ for each Reeb orbit $\gamma$, elements in $\overline{SV_{\alpha}\otimes_{\Q}\Lambda}$ or $\overline{EV_{\alpha}\otimes_{\Q}\Lambda}$ can be written as a countable sum of monomials whose weight goes to infinity. Therefore, the second property also characterizes the weight by the minimal weight of monomials in the series. The weight filtration on these spaces consists of elements of weight no less than a filtration degree. We note here that $\overline{SV_{\alpha}\otimes_{\Q}\Lambda}$ and  $\overline{EV_{\alpha}\otimes_{\Q}\Lambda}$ are \emph{not} complete with respect to the weight filtration. 

\begin{definition}\label{def:weight_map}
    We say an element is of pure weight if it is a linear combination of monomials of the same weight. Then any element can be decomposed into elements of pure weight (possibly an infinite sum). A linear map between these spaces has weight $w$ if it sends the linear space of elements of pure weight $c$ (here we consider $0$ having arbitrary weight) to the linear space of elements of pure weight $c+w$.
\end{definition}
It is clear from the definition that all of $p,p_{\bullet},\phi,\phi_{\bullet}$ have weight zero. This is because in an exact cobordism or symplectization, the $\omega$-energy (appearing in the power of the Novikov variable) is determined by periods of asymptotic Reeb orbits. This is no longer the case for a strong cobordism.  

Note that any element in $EV_{\alpha}\otimes_{\Q}\Lambda$ or $\overline{EV_{\alpha}\otimes_{\Q}\Lambda}$ with pure weight is contained in $\overline{B}^kB^m V_{\alpha}\otimes_{\Q}\Lambda'=(\oplus_{i=1}^k S^i(\oplus_{j=0}^m S^jV))\otimes_{\Q}\Lambda'$ for some $k,m$ with coefficients in $\Lambda':=\{\sum_{i=1}^N a_iT^{t_i}|N\in \N_+, a_i\in \Q \}\subset \Lambda$, as the period $\int \gamma^*\alpha$ of a Reeb orbit $\gamma$ is strictly positive and below any period there are only finitely many Reeb orbits. In other words, we have the following.
\begin{proposition}\label{prop:weight}
    An element of pure weight must be a finite sum of monomials.
\end{proposition}
Sometimes, $p$ being a weight $0$ map allows us to focus on pure weight elements, then \Cref{prop:weight} establishes finiteness. Such finiteness is needed in the proof of \Cref{thm:strong_torsion}. Before diving into Maurer-Cartan elements in RSFT, we first explain two basic facts using the weight.
\begin{proposition}\label{prop:APT}
    $\PT$ is the same for $\RSFT(Y),\RSFT_{\Lambda}(Y)$ and $\overline{\RSFT}_{\Lambda}(Y)$.
\end{proposition}
\begin{proof}
    By sending $q_{\gamma}$ to $T^{-\int \gamma^*\alpha}q_{\gamma}$, i.e., making it of weight zero, we get a chain map $E^kV_\alpha\to E^kV_\alpha\otimes_{\Q}\Lambda$. It is clear that we have a chain map  $E^kV_\alpha\otimes_{\Q}\Lambda\to \overline{E^kV_\alpha\otimes_{\Q}\Lambda}$. Therefore we have 
    $$ \PT \text{ of } \RSFT(Y)\ge \PT \text{ of } \RSFT_{\Lambda}(Y) \ge \PT \text{ of } \overline{\RSFT}_{\Lambda}(Y).$$
    Conversely, if $\PT$ of $\overline{\RSFT}_{\Lambda}(Y)$ is $k$, in particular, there is an element $x\in \overline{E^kV_{\alpha}\otimes_{\Q}\Lambda}$ such that $\widehat{p}(x)=1$. We pick out the weight zero part $x_0$ of $x$, which is a finite sum of monomials by \Cref{prop:weight}. Since $\widehat{p}$ has weight $0$, we have $\widehat{p}(x_0)=1$ on $\overline{\RSFT}_{\Lambda}(Y)$. Since $x_0$ is a finite sum of monomials, by setting $T=1$, we get $\widehat{p}(x_0|_{T=1})=1$ on $\RSFT(Y)$. Therefore $\PT$ of $\RSFT(Y)$ is at most $k$.
\end{proof}
\begin{proposition}\label{prop:aug}
    Augmentations of $\RSFT(Y)$ induce augmentations of $\RSFT_{\Lambda}(Y)$ and $\overline{\RSFT}_{\Lambda}(Y)$. Augmentations for $\overline{\RSFT}_{\Lambda}(Y)$ induce augmentations of $\RSFT_{\Lambda}(Y)$.
\end{proposition}
\begin{proof}
    Let $\epsilon$ be an augmentation of $\RSFT(Y)$, we define 
    $$\epsilon^k_{\Lambda}(q^{\Gamma}) = T^{\sum_{\gamma \in \Gamma} \int \gamma^*\alpha} \epsilon^k(q^{\Gamma})$$
    that is, we make $\epsilon_{\Lambda}$ weight zero. It is straightforward to check that $\epsilon_{\Lambda}$ is both an augmentation of  $\RSFT_{\Lambda}(Y)$ and $\overline{\RSFT}_{\Lambda}(Y)$. The second statement is clear, as $EV_{\alpha}\otimes_{\Q}\Lambda$ is a subcomplex of $\overline{EV_{\alpha}\otimes_{\Q}\Lambda}$. 
\end{proof}
The augmentations on $\RSFT_{\Lambda}(Y)$ and $\overline{\RSFT}_{\Lambda}(Y)$ induced from an augmentation of $\RSFT(Y)$ are of weight zero, i.e., they behave like an augmentation from an exact filling but using the elementary or completed Novikov coefficient. In the other direction, in general, we cannot specialize $\RSFT_{\Lambda}(Y)$  or $\overline{\RSFT}_{\Lambda}(Y)$  at $T=1$ due to convergence issues. Moreover, it is impossible to pick out a weight-zero part of an augmentation $\epsilon$ of $\RSFT_{\Lambda}(Y)$, as the weight zero part of $\widehat{\epsilon}$ (corresponding to disconnected curves) is assembled from weight zero parts of each $\epsilon^k$.

\subsection{Maurer-Cartan Elements}\label{ss:MC}
Let $(X,\omega)$ be a strict strong cobordism from $(Y_-,\alpha_-)$ to $(Y_+,\alpha_+)$. It induces a Maurer-Cartan element:
$$\mathfrak{mc}= \sum_{A,[\Gamma_-]} \# \overline{\cM}_{X,A}(\emptyset, \Gamma_-)\frac{T^{\int_A \overline{\omega}}}{\mu_{\Gamma_-}\kappa_{\Gamma_-}}q^{\Gamma^-}$$
where $\overline{\omega}$ defined on $\widehat{X}$ is $\omega$ on $X$ and $\rd \alpha_-,\rd\alpha_+$ on the cylindrical ends of $X$. Even though $\overline{\omega}$ is continuous but not smooth on $\widehat{X}$ (as $\overline{\omega}$ does not vary smoothly in the cylindrical direction along the boundary of $X$), it is easy to show that $\int_A \overline{\omega}$ depends only on the relative homology class by Stokes' theorem. If $u\in \overline{\cM}_{X,A}(\emptyset, \Gamma_-)$, we have $0<\int u^*\overline{\omega} = \int_A \overline{\omega}$, which is called the $\omega$-energy in \cite[\S 6.1]{bourgeois2003compactness}. Then the Hofer energy of a curve $u$ in $\widehat{X}$ \cite[(22), (23)]{bourgeois2003compactness} asymptotic to Reeb orbits on both sides is defined by  
$$E_H(u):=\int u^*\overline{\omega} + \sup_{\phi_{\pm}} \left( \int_{u^{-1}(\R_+\times Y_+)} (\phi_+\circ s_+) \rd s_+\wedge \alpha_+ +  \int_{u^{-1}(\R_-\times Y_-)} (\phi_-\circ s_-) \rd s_-\wedge \alpha_-\right)$$
where the supremum is taken over all pairs $(\phi_+,\phi_-)$ of smooth functions such that $\phi_{\pm}:\R_{\pm}\to \R_+$ and $\int_{\R_{\pm}}\phi_{\pm}=1$, and $s_{\pm}$ are cylindrical coordinates on the two ends of $\widehat{X}$. 

\begin{proposition}\label{prop:Hofer}
    Let $u\in \overline{\cM}_{X,A}(\emptyset, \Gamma_-)$. Then there exists a constant $C>0$ depending only on $X$ such that $E_H(u)\le C \int u^*\overline{\omega}$. 
\end{proposition}
\begin{proof}
   Since $u\in \overline{\cM}_{X,A}(\emptyset, \Gamma_-)$, the maximum principle for the $s_+$ coordinate implies that $u$ does not enter $\R_+\times Y_+$. Therefore, the term with the supremum only has the second term. Let $\psi_-(s)=\int_{-\infty}^s \phi_-$, then $\rd \psi_-\ge 0$ and $\psi_-(0)=1$ and $\psi_-(-\infty)=0$. There exists a collar $[0,\delta)_s\times Y_-\to X$ such that the symplectic form  $\omega$ is pulled back to $\rd(e^{s}\alpha_-)$. Since $\psi_-\ge 0$, we have 
   \begin{eqnarray*}
       \int_{u^{-1}(\R_-\times Y_-)} (\phi_-\circ s_-) \rd s_-\wedge \alpha_- & \le & \int_{u^{-1}(\R_-\times Y_-)} \rd(\psi_-(s_-)\alpha_-)\\
       & \le  & \int_{u^{-1}(\R_-\times Y_-)} \rd(\psi_-(s_-)\alpha_-)+ \int_{u^{-1}(X)} u^*\omega\\
       & = & \int u^*\omega_\psi
   \end{eqnarray*}
   where $\omega_{\psi}$ is a continuous $2$-form which is $\omega$ on $X$ and is $\rd(\psi_-(s_-)\alpha_-)$ on $\R_-\times Y_-$. Consider a smooth function $f$ defined on $(-\infty,\delta)$ such that $f'\ge 0$, $f=0$ on $\R_-$ and $f=e^s$ near $\delta$. Then we define a form $\omega_f$ on $\widehat{X}$ by $\omega$ on $X\backslash [0,\delta)_s\times Y_- $, and $\rd(f(s)\alpha_-)$ on $(-\infty,\delta)_s\times Y_-$; the definition on $\R_+\times Y_+$ will not be relevant. It is straightforward to check by Stokes' theorem that 
   $$\int u^*\omega_\psi = \int u^*\omega_f.$$
   As $\omega_f=\rd f\wedge \alpha_-+ f\wedge \rd \alpha_-$ and both terms are non-negative when pulled back by $u$, we have 
   $$u^*\omega_f \le Cu^*\omega \text{ on } u^{-1}(X),$$
   where $C=\max_{s\in [0,\delta]}\{ f'/e^s, f/e^s\}$. Consequently, we have 
\begin{eqnarray*}
       \int_{u^{-1}(\R_-\times Y_-)} (\phi_-\circ s_-) \rd s_-\wedge \alpha_- & \le & \int u^*\omega_\psi
        =   \int u^*\omega_f\\
       & = &  \int_{u^{-1}(X)}u^*\omega_f 
        =   (C+1)\int_{u^{-1}(X)}u^*\omega
   \end{eqnarray*}
Therefore we have 
$$E_H(u)\le (C+2)\int u^*\overline{\omega}$$
The constant $C$ depends only on $X$, i.e., the width of the collar around $Y_-$.
\end{proof}
Therefore \Cref{prop:Hofer} and the SFT compactness \cite{bourgeois2003compactness} imply that $\mathfrak{mc}$ is in the completion $\overline{SV_{\alpha_-}\otimes_{\Q}\Lambda}$ of a positive filtration degree, assuming the counting is taken care of by some virtual technique. We can also define a map $\phi^{k,l}:\overline{S^kV_{\alpha_+}\otimes_{\Q}\Lambda}$ to $\overline{S^lV_{\alpha_-}\otimes_{\Q}\Lambda}$ for $k\ge 1$ by 
$$\phi^{k,l}(q^{\Gamma^+})=\sum_{A,|[\Gamma^-]|=l}\#\overline{\cM}_{X,A}(\Gamma^+,\Gamma^-)\frac{T^{\int_A \overline{\omega}}}{\mu_{\Gamma^-}\kappa_{\Gamma^-}}q^{\Gamma^-}.$$
It is well-defined in the completion since the Hofer energy of $u\in \overline{\cM}_{X,A}(\Gamma^+,\Gamma^-)$ is bounded by $C\int u^*\overline{\omega}+\sum_{\gamma \in  \Gamma_+}\int \gamma^*\alpha_+$, where $C$ is a fixed constant depending only on $X$, and the extra $\sum_{\gamma \in  \Gamma_+}\int \gamma^*\alpha_+$ comes from the first term in the supremum term of the Hofer energy. The proof of this bound is similar to \Cref{prop:Hofer}.

Similarly, for a path $\gamma$ in $X$ with ends $o_-$ in $Y_-$ and $o_+$ in $Y_+$, we have 
\begin{equation}\label{eqn:MC_bullet}
    \mathfrak{mc}_{\bullet}= \sum_{A,[\Gamma_-]} \# \overline{\cM}_{X,A,\gamma}(\emptyset, \Gamma_-)\frac{T^{\int_A \overline{\omega}}}{\mu_{\Gamma_-}\kappa_{\Gamma_-}}q^{\Gamma^-}
\end{equation}
is a pointed Maurer-Cartan element for the pointed map induced from $o_-$. It is helpful to point out that a curve in $\overline{\cM}_{X,A,\gamma}(\emptyset, \Gamma_-)$ cannot pass through the ray in $Y_+\times \R_+$ from $o_+$ by the maximum principle. Then the boundary configuration of $\overline{\cM}_{X,A,\gamma}(\emptyset, \Gamma_-)$ implies the algebraic property for a pointed Maurer-Cartan element. Finally, $\phi_{\bullet}$ from $X$ is defined similarly. 

Since the Maurer-Cartan deformed theory has a description by trees, the following proposition follows from the same argument as \Cref{thm:vfc}.
\begin{proposition}\label{prop_MC_vfc}
Assume $X$ is a strict strong cobordism from $(Y_-,\alpha_-)$ to $(Y_+,\alpha_+)$. Then there exists a set of auxiliary data $\Xi$ and a surjective map $\Xi \to \Theta_-\times \Theta_+$ such that for each $\xi\in \Xi$, which is mapped to $(\theta_-,\theta_+)\in \Theta_-\times \Theta_+$, we have a Maurer-Cartan element $\mathfrak{mc}_{\xi}$ of $p_{\theta_-}$ and a filtered completed $BL_\infty$ morphism $\phi_{\xi}$ from $p_{\theta_+}$ to $p_{\theta_-,\mathfrak{mc}_{\xi}}$. Assume $o_+,o_-$ are two points in $Y_+,Y_-$ that are in the same component of $X$. Then $\phi_{\xi},p_{\bullet,\theta_{o_+}},p_{\bullet,\theta_{o_-},\mathfrak{mc}_{\xi},\mc_{\bullet}}$ are compatible with a pointed Maurer-Cartan element $\mathfrak{mc}_{\bullet}$.
\end{proposition}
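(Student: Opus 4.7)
The plan is to mimic the construction of Theorem \ref{thm:vfc} of \cite{MZ22}, which treats the exact case, and to feed the resulting moduli counts into the purely algebraic Proposition \ref{prop:MC}. The only genuinely new analytic input is that the cobordism is only strong, so one must replace the primitive $\lambda$ with the two-form $\overline{\omega}$ and use the $\omega$-energy to control compactness; everything else is a direct translation of boundary configurations of $1$-dimensional moduli spaces into forest identities.

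First I would fix Pardon's virtual fundamental chain package on the completion $\widehat{X}$ of the strong cobordism $(X,\omega)$, compatible with already chosen data $\theta_\pm$ on $(Y_\pm,\alpha_\pm)$. This yields a set $\Xi$ with a surjection $\Xi\to\Theta_-\times\Theta_+$ and, for each $\xi\in\Xi$, virtual counts $\#\overline{\cM}_{X,A}(\Gamma^+,\Gamma^-)$ defining the coefficients in $\mathfrak{mc}_\xi$ (the $\Gamma^+=\emptyset$ counts) and in $\phi_\xi^{k,l}$ (the $\Gamma^+\ne\emptyset$ counts) by the formulas written just before the proposition. Well-definedness in the completions $\overline{SV_{\alpha_-}\otimes_\Q\Lambda}$ and $\overline{S^kV_{\alpha_+}\otimes_\Q\Lambda}\to\overline{S^lV_{\alpha_-}\otimes_\Q\Lambda}$ follows from SFT compactness \cite{bourgeois2003compactness}: the Hofer energy of $u$ is bounded by $2\int u^*\overline{\omega}+\sum_{\gamma\in\Gamma^+}\int\gamma^*\alpha_+$, so only finitely many homology classes contribute for fixed asymptotics and bounded filtration level. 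Positive filtration degree of $\mathfrak{mc}_\xi$ is automatic because $\int u^*\overline{\omega}>0$ for non-constant curves.

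Next I would identify boundary strata of the relevant one-dimensional moduli spaces with the forest operations from \S\ref{ss:tree}. The boundary of $\overline{\cM}_{X,A}(\emptyset,\Gamma^-)_1$ consists of SFT buildings with a top level of caps in $X$ and lower levels in $\R\times Y_-$: arranging these by which level hosts the rigid broken curve exactly reproduces the two types of trees entering $\widehat{p}(e^{\mathfrak{mc}_\xi}-1)$, and the compatibility conditions in the virtual package force the signed count of the boundary to vanish, yielding the Maurer--Cartan equation. An analogous bookkeeping on $\overline{\cM}_{X,A}(\Gamma^+,\Gamma^-)_1$ decomposes the boundary into (i)~$X$-caps below cylinders in $\R\times Y_+$ (the $\widehat{\phi}_\xi\circ\widehat{p}_{\theta_+}$ side) and (ii)~$X$-caps above cylinders in $\R\times Y_-$ with extra pure-cap floors attached (the $\widehat{p}_{\theta_-,\mathfrak{mc}_\xi}\circ\widehat{\phi}_\xi$ side, via the forest description of $p_{\theta_-,\mathfrak{mc}_\xi}$ in Figure \ref{fig:pmc}). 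Cancelling these gives the deformed morphism relation from Proposition \ref{prop:MC}\eqref{m1}.

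For the pointed statement, I would choose a proper path $\widehat{\gamma}$ in $\widehat{X}$ constant in the cylindrical ends joining $o_-$ to $o_+$, define $\mathfrak{mc}_\bullet$ and $\phi_\bullet$ by counts of curves through $\widehat{\gamma}$, and set $\mc'_\bullet=\pi_1(\widehat\phi_{\bullet,\xi}(e^{\mathfrak{mc}_\xi})+\widehat\phi_\xi(\mathfrak{mc}_\bullet\odot e^{\mathfrak{mc}_\xi}))$ geometrically as the count of curves with one marked point running through $\widehat{\gamma}$ and no positive punctures. Boundary analysis of $1$-dimensional moduli spaces, together with the maximum principle (which, as noted above, prevents a positively-unpunctured curve in $\widehat{X}$ from crossing the ray $o_+\times\R_+$), reproduces exactly the compatibility identity in Definition \ref{def:compatible} once combined with Proposition \ref{prop:MC}\eqref{m3}.

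The main obstacle is the virtual-perturbation bookkeeping for strong cobordisms: one must verify that Pardon's construction admits coherent perturbation data simultaneously compatible with the already-fixed data on both symplectizations, on the exact case within the cobordism (for the pointed version, when $\widehat{\gamma}$ is involved), and on all SFT neck-stretching limits. This is standard in spirit but requires checking that the arguments of \cite[\S 3]{MZ22} carry over verbatim once $\rd\lambda$ is replaced by $\overline{\omega}$ and the energy bound above is invoked; no new algebraic content enters, because the forest calculus of \S\ref{ss:tree} and Proposition \ref{prop:MC} already package every relation that needs to be verified.
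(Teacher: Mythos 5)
Your proposal is correct and follows essentially the same route as the paper: the paper's own proof is precisely the observation that, since the Maurer--Cartan deformed structures admit a tree/forest description, the construction of the counts and the verification of the algebraic relations reduce to the same virtual-moduli and boundary-stratum analysis as in \cite[Theorem 3.10]{MZ22}, with well-definedness in the completion supplied by the $\omega$-energy bound and the pointed statement handled via the path $\widehat{\gamma}$ and the maximum principle. Your elaboration of the boundary decompositions and the appeal to Proposition \ref{prop:MC} matches the intended argument.
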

\section{Invariants}\label{s4}
\subsection{Algebraic planar torsion} As explained in \cite{MZ22}, while setting up the theory requires the choice of contact forms and auxiliary data coming from virtual techniques, the following is independent of choices:

\begin{definition}[{\cite[Proposition 3.11]{MZ22}}]
For a contact manifold $(Y,\xi)$, we define its \textbf{algebraic planar torsion} $\PT(Y)$ to be the torsion of the $BL_{\infty}$ algebra $\RSFT(Y,\alpha,\theta)$, for any choice of contact form $\alpha$ and auxiliary choice $\theta$ in \Cref{thm:vfc}.
\end{definition}

By \Cref{prop:APT}, algebraic planar torsion is independent of the choice of coefficients, i.e.\ $\Q,\Lambda$ and the completed version over $\Lambda$. The finiteness of $\PT$ is an obstruction to $BL_\infty$ augmentations, hence implies no strong filling (by \eqref{BL:6} of \Cref{thm:vfc}) and $\PT$ is non-decreasing in an exact cobordism. We refer to \cite[\S 3.5]{MZ22} for properties that are enjoyed by this invariant as well as their proofs.

\begin{proof}[Proof of \Cref{thm:strong_torsion}]
By assumption, $\PT(Y_+)<+\infty$, so there exists an element $v\in E^kV_{\alpha_+}\otimes_{\Q}{\Lambda} \subset \overline{EV_{\alpha_+}\otimes_{\Q}\Lambda}$ of weight $0$ such that $\widehat{p}_+(v)=1$, as $\widehat{p}_+$ is a weight zero map.  By \Cref{prop:MC_r,prop_MC_vfc}, we have filtered (with respect to the filtration induced from $\Lambda$, not the weight filtration) chain maps
$$(\overline{EV_{\alpha_+}\otimes_{\Q}\Lambda},\widehat{p}_+)\stackrel{\widehat{\phi}}{\longrightarrow} (\overline{EV_{\alpha_-}\otimes_{\Q}\Lambda},\widehat{p}_{-,\mathfrak{mc}}) \stackrel{\exp_{\mc}}{\longrightarrow}(\overline{EV_{\alpha_-}\otimes_{\Q}\Lambda},\widehat{p}_-).$$
As a consequence, we have $\widehat{p}_-\circ \exp_{\mc} \circ \widehat{\phi} (v) = \exp_{\mc}(1)=1\odot e^{\mc}$ for $1\in SV\subset \overline{EV\otimes_{\bk}R}$. Let $v_0$ be the weight zero part of $\exp_{\mc} \circ \widehat{\phi} (v)$; then $\widehat{p}_-(v_0)=1$ as the weight zero part of $1\odot e^{\mc}$ is $1$ and $\widehat{p}_-$ is of weight zero. Then there must be $m\in \N$ such that $v_0\in E^mV_{\alpha_-}\otimes_{\Q}\Lambda$ by \Cref{prop:weight}, i.e.\ $\PT(Y_-)<\infty$.
\end{proof}

\begin{example}
It is expected that containing a bordered Legendrian open book (\emph{bLob}) \cite[Section 3]{niederkruger2011weak} would imply that the contact homology vanishes, i.e.\ $\PT=0$ (see e.g.\ \cite{Huang} in dimension $5$). Assuming this is established, the cobordism in \cite{niederkruger2011weak}, along with \Cref{thm:strong_torsion}, yields another proof that the algebraic planar torsion of contact manifolds with higher-dimensional Giroux torsion \cite{niederkruger2011weak} is finite. Such results were established by the first author in \cite{MorPhD} using a more direct approach with explicit upper bounds. The functorial approach using \Cref{thm:strong_torsion} can be refined in a similar way to \Cref{cor:divsor,cor:BO} to obtain upper bounds if we know more precisely about how finite algebraic torsion is formed for the convex boundary. 
\end{example}

\begin{remark}\label{rmk:LW}
    In \cite[Question 2]{LW}, Latschev and Wendl asked for the version of \Cref{thm:strong_torsion} for algebraic torsions \cite[Definition 1.1]{LW}. This follows from the same proof using the weight zero part of the deformed functoriality in the Maurer-Cartan formalism for $BV_\infty$ algebras, assuming \Cref{thm:vfc} and \Cref{prop_MC_vfc} hold for the $BV_{\infty}$ formalism of the full SFT in \cite{cieliebak2009role,LW}.
\end{remark}

\subsection{Planarity} If $\RSFT_{\Q}(Y)=(V_\alpha,p)$ has a $BL_\infty$ augmentation $\epsilon$ over $\Q$, then $\PT(Y)$ is $\infty$, but we may still define a meaningful invariant as follows. Given a point $o\in Y$, we consider the associated pointed morphism $p_{\bullet}$, and the corresponding order $O(V_{\alpha},\epsilon,p_{\bullet})$. Let $\Aug_{\Q}(V_\alpha)$ denote the set of $BL_\infty$ augmentations of $V_\alpha$ over $\Q$. Again, the following definitions are independent of all auxiliary choices \cite{MZ22}:

\begin{definition}[{\cite[Definition 3.21]{MZ22}}]\label{def:planarity}
	For a contact manifold $(Y,\xi)$, we define its \textbf{planarity} as $$\Pl(Y):=\max\left\{O(V_{\alpha},\epsilon,p_{\bullet})\left|\forall \epsilon \in \Aug_{\Q}(V_\alpha), o \in Y, \theta_o\in \Theta_o\right.\right\},$$
	where the maximum of an empty set is defined to be zero. Here, $\alpha$ is any choice of contact form.
\end{definition}
The basic property of $\Pl$ is that $\Pl(Y_+)\le \Pl(Y_-)$ if there is an exact cobordism from $Y_-$ to $Y_+$. We again refer to \cite[\S 3.5]{MZ22} for the properties concerning this invariant. Similarly, we can define $\Pl_{\Lambda}$ and $\overline{\Pl}_{\Lambda}$ using the elementary version over $\Lambda$ and the filtered completed version of the rational SFT, where in both cases we require the augmentation is filtered, i.e.\ preserves the filtration induced from the filtration on $\Lambda$.
\begin{proposition}
    We have $\Pl(Y)\le \overline{\Pl}_{\Lambda}(Y) \le \Pl_{\Lambda}(Y).$
\end{proposition}
\begin{proof}
     By \Cref{prop:aug}, augmentations of $\RSFT$ induce augmentations of $\RSFT_{\Lambda}$ and $\overline{\RSFT}_{\Lambda}$. Moreover, those induced augmentations are of weight zero; as a consequence, $\widehat{p}_{\epsilon}$ is also of weight zero. As $p_{\bullet}$ is of weight zero, by a similar argument to \Cref{prop:APT}, we see that $O(V_{\alpha},\epsilon,p_{\bullet})$ is the same for those three augmentations on $\RSFT$, $\RSFT_{\Lambda}$ and $\overline{\RSFT}_{\Lambda}$. As a consequence, we have $\Pl \le \Pl_{\Lambda},\overline{\Pl}_{\Lambda}$. Finally, by \Cref{prop:aug}, an augmentation $\overline{\epsilon}$ of $\overline{\RSFT}_{\Lambda}$ induces an augmentation $\epsilon$ on $\RSFT_{\Lambda}$, where $p_{\overline{\epsilon}}=p_{\epsilon}$ on monomials by construction.   As $(\overline{B}^kV_{\alpha},\widehat{\ell}_{\epsilon})$ is a subcomplex of $(\overline{B}^kV_{\alpha},\widehat{\ell}_{\overline{\epsilon}})$ such that the following is commutative,
     $$
     \xymatrix{
     (\overline{B}^kV_{\alpha},\widehat{\ell}_{\epsilon}) \ar[r]^{\subset} \ar[d]^{\widehat{\ell}_{\bullet,\epsilon}} & (\overline{B}^kV_{\alpha},\widehat{\ell}_{\overline{\epsilon}})\ar[d]^{\widehat{\ell}_{\bullet,\overline{\epsilon}}}\\
     \Lambda \ar[r]^{=} & \Lambda
     }
     $$
     we have $O(\epsilon,p_{\bullet})\ge O(\overline{\epsilon},p_{\bullet})$. As a consequence, we have $\overline{\Pl}_{\Lambda}\le \Pl_{\Lambda}$.
\end{proof}
It is not clear whether there are examples such that those inequalities become strict. All three of them enjoy the same functorial property with respect to exact cobordisms by the argument in \cite[\S 3.5]{MZ22}. Heuristically, $\Pl$ looks for rational curves passing through a fixed point in the contact manifold, typically those without negative punctures, and $\Pl$ counts the number of positive punctures.
\begin{proof}[Proof of \Cref{thm:cofilling}]
    Assume there is a connected strong filling $W$ of $Y\sqcup Y'$, which induces a $\Lambda$-augmentation $\epsilon$ of $\RSFT_{\Lambda}(Y)$. By taking a point $o\in Y$, we get a pointed map $p_{\bullet}$. If we consider curves in $\widehat{W}$ with positive punctures only asymptotic to Reeb orbits on $Y$ and with one marked point going through the completion of a ray $\gamma$ starting from $o$ and shooting out to infinity on the cylindrical end of $Y'$, we get a structural map $\epsilon_{\bullet}$, which is a special case of $\phi_{\bullet}$, i.e.\ $(\RSFT(Y),p,p_{\bullet}),(\{0\},0,0), \epsilon$ are compatible through $\epsilon_\bullet$. Even though $\gamma$ is non-compact, the moduli space is compactified as usual, as there is no curve with only positive punctures asymptotic to orbits on $Y$ going through the infinite end of $\gamma$ in $\R_+\times Y'$ by the maximum principle. Then by \cite[Proposition 2.20]{MZ22}, we have $O(\RSFT(Y),\epsilon,p_{\bullet})\ge  O(\{0\},0,0) = +\infty$, a contradiction! 
\end{proof}
\begin{remark}
    \Cref{thm:cofilling} holds for $\overline{P}_{\Lambda}<\infty$ as well by an identical proof.
\end{remark}
In the following, we consider planarity under strong cobordisms. Although the relevant algebraic structures for pointed maps deformed by Maurer-Cartan elements are discussed at the end of \S \ref{ss:MC_Alg}, unlike the situation in \Cref{thm:strong_torsion}, where $\widehat{p}$ has weight zero, $\widehat{\phi}$ from the strong cobordism $X$ and augmentations to $\RSFT_{\Lambda}(Y_-)$ do not have pure weight. It is hard to conclude that the output in $Y_-$ that contributes to the planarity has a finite length, and it is harder to conclude a universal upper bound of length independent of augmentations. Instead of using the assembled structured maps and their general properties considered in \S \ref{s2}, we will use the geometric picture behind \Cref{thm:vfc} by only looking at the boundary of certain moduli spaces of virtual dimension $1$ and additional knowledge of curves in the convex boundary explained in \Cref{prop:simple} below. 
\begin{proposition}[\Cref{thm:planarity_strong}]\label{prop:simple}
Let $X$ be a strong cobordism from $Y_-$ to $Y_+$. Assume there are auxiliary data $\alpha_+,\theta_+$ for $Y_+$ such that the following holds:
\begin{enumerate}
    \item\label{p1} There exist good Reeb orbits $\gamma_1,\ldots,\gamma_k$ of $\alpha_+$ such that for any subset $S\subset \{ 1,\ldots,k\}$, we have $p^{|S|,l}_+(\prod_{i\in S} q_{\gamma_i})=0$ for any $l\ge 0$.
    \item\label{p2} For any point $o$ with associated pointed map $p_{\bullet,+}$, we have $p^{k,0}_{\bullet,+}(\prod_{i=1}^k q_{\gamma_i}) \ne 0$ and $p^{|S|,l}_{\bullet,+}(\prod_{i\in S} q_{\gamma_i}) = 0$ for any $l>0$ and subset $S\subset \{ 1,\ldots,k\}$.
\end{enumerate}
These two conditions imply that $\Pl_{\Lambda}(Y_+)\le k$ (as the choice of augmentation does not affect the computation of planarity here). Then $\Pl_{\Lambda}(Y_-)<+\infty$.
\end{proposition}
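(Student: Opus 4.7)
The strategy is to transfer the universal bar-witness
$c_+ := q_{\gamma_1}\odot\cdots\odot q_{\gamma_k}\in \overline{S}^k(V_{\alpha_+}\otimes_{\Q}\Lambda)$
provided by the hypotheses on $Y_+$ to a bar-witness for the concave boundary $Y_-$, via the deformed functoriality of the strong cobordism $X$ in the Maurer--Cartan formalism of \S\ref{ss:MC}.

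First, apply \Cref{prop_MC_vfc} to $X$ to obtain a Maurer--Cartan element $\mathfrak{mc}\in \overline{SV_{\alpha_-}\otimes_{\Q}\Lambda}$ of positive filtration degree for $p_-$, a pointed Maurer--Cartan element $\mathfrak{mc}_\bullet$, a filtered completed $BL_\infty$ morphism $\phi\colon (V_{\alpha_+}\otimes_{\Q}\Lambda,\, p_+)\to (V_{\alpha_-}\otimes_{\Q}\Lambda,\, p_{-,\mathfrak{mc}})$, and data making $\phi$, $p_{\bullet,+}$, and the deformed pointed map $p_{\bullet,-,\mathfrak{mc},\mathfrak{mc}_\bullet}$ compatible in the sense of \Cref{def:compatible}. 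Given any filtered augmentation $\epsilon_-$ of the undeformed $(V_{\alpha_-}\otimes_{\Q}\Lambda,\, p_-)$, apply \Cref{prop:MC}\eqref{m1}, with target the trivial $BL_\infty$ algebra $\mathbf{0}$, to produce a deformed augmentation $\widetilde{\epsilon}_-$ of $(V_{\alpha_-}\otimes_{\Q}\Lambda,\, p_{-,\mathfrak{mc}})$; then $\epsilon_+ := \widetilde{\epsilon}_- \circ \phi$ is a filtered augmentation of the convex $BL_\infty$ algebra $(V_{\alpha_+}\otimes_{\Q}\Lambda,\, p_+)$.

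Hypotheses \eqref{p1}--\eqref{p2} imply that $c_+$ is a $\widehat{\ell}_{\epsilon_+}$-cycle and that $\widehat{\ell}_{\bullet,\epsilon_+}(c_+)\neq 0$ independently of the choice of $\epsilon_+$, so $O(\epsilon_+,\, p_{\bullet,+})\le k$. Combining this with the compatibility established above and \Cref{prop:order} yields the \emph{deformed} order bound
\[
 O\bigl(\widetilde{\epsilon}_-,\, p_{\bullet,-,\mathfrak{mc},\mathfrak{mc}_\bullet}\bigr) \;\le\; O\bigl(\epsilon_+,\, p_{\bullet,+}\bigr) \;\le\; k.
\]

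The remaining, and principal, difficulty is to convert this deformed-order bound into a uniform finite bound on the \emph{undeformed} order $O(\epsilon_-,\, p_{\bullet,-})$. The chain isomorphism $\exp_{\mathfrak{mc}}$ identifies the deformed and undeformed $\widehat{p}$-complexes on $\overline{EV_{\alpha_-}\otimes_{\Q}\Lambda}$ compatibly with $\widetilde{\epsilon}_-$ and $\epsilon_-$, but it does \emph{not} preserve the sentence-length filtration $\overline{S}^{\le k}V$, since it adjoins arbitrary powers of $\mathfrak{mc}$. The essential leverage is that $\mathfrak{mc}$ has strictly positive filtration degree, so at each fixed Novikov filtration level only finitely many powers of $\mathfrak{mc}$ can appear. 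Pushing $\widehat{\phi}(c_+)\in \overline{EV_{\alpha_-}\otimes_{\Q}\Lambda}$ (which already has sentence length at most $k$) through $\exp_{\mathfrak{mc}}$, extracting its $\overline{S}V_{\alpha_-}$-component, and using the explicit forms of $\phi_\bullet$ and $\mathfrak{mc}_\bullet$ from \Cref{prop:MC}\eqref{m2}--\eqref{m3} to absorb the non-diagonal letters, produces a bar-witness in $\overline{S}^{\le K}V_{\alpha_-}\otimes_{\Q}\Lambda$ of some finite length $K$ depending only on $\mathfrak{mc}$ and $\mathfrak{mc}_\bullet$, and independent of $\epsilon_-$. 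Taking the supremum over augmentations $\epsilon_-$ then gives $\overline{\Pl}_\Lambda(Y_-)\le K<+\infty$.
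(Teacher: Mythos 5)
Your setup---producing the Maurer--Cartan data for $X$ via \Cref{prop_MC_vfc}, deforming a given augmentation $\epsilon_-$ of $(V_{\alpha_-},p_-)$ to an augmentation $\widetilde{\epsilon}_-$ of $p_{-,\mathfrak{mc}}$, pulling back to $\epsilon_+=\widetilde{\epsilon}_-\circ\phi$, and invoking \Cref{prop:order} to get the deformed order bound $O(\widetilde{\epsilon}_-,p_{\bullet,-,\mathfrak{mc},\mathfrak{mc}_\bullet})\le k$---is sound, and it is exactly the ``deformed functoriality'' the introduction alludes to. The gap is in your final step. Positivity of the Novikov filtration degree of $\mathfrak{mc}$ does not control word length: the generators $q_\gamma$ carry filtration degree $0$, so a term of small filtration degree may contain arbitrarily many letters, and $e^{\mathfrak{mc}}$ genuinely contributes terms of unbounded word length to the completion. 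Consequently there is no finite $K$ with $\exp_{\mathfrak{mc}}(\widehat{\phi}(c_+))\in \overline{S^{\le K}V_{\alpha_-}\otimes_{\Q}\Lambda}$, and truncating the series at a fixed filtration level only yields closedness in a quotient complex, not in $(\overline{\overline{S}V_{\alpha_-}\otimes_{\Q}\Lambda},\widehat{\ell}_{\epsilon_-})$ itself. So the asserted bar-witness of length ``$K$ depending only on $\mathfrak{mc}$ and $\mathfrak{mc}_\bullet$'' is not produced by your argument.

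What closes the gap in the paper is the weight grading of \Cref{rmk:weight} rather than the Novikov filtration: $q_\gamma$ is assigned weight $\int\gamma^*\alpha$, the operations $p_-,p_{\bullet,-}$ preserve weight exactly, and a curve in $X$ changes weight by $\int_A\overline{\omega}+\sum_{\Gamma_-}\int\gamma^*\alpha_--\sum_{\Gamma_+}\int\gamma^*\alpha_+$ with $\int_A\overline{\omega}\ge 0$. Restricting the cobordism level applied to $q_{\gamma_1}\odot\cdots\odot q_{\gamma_k}$ to \emph{total weight change zero} therefore forces the sum of output periods to be at most $\sum_i\int\gamma_i^*\alpha_+$, which bounds the word length of the output $v$ by $N=\sum_i\int\gamma_i^*\alpha_+\big/\min_\gamma\int\gamma^*\alpha_-$, uniformly in $\epsilon_-$. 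The price of this truncation is that closedness of $v$ under $\widehat{\ell}_{\epsilon_-}$ and nonvanishing of $\widehat{\ell}_{\bullet,\epsilon_-}(v)$ are no longer formal consequences of the compatibility relations; they must be verified from the boundary degenerations of the weight-zero moduli spaces in $X$, and this is precisely where the geometric hypotheses \eqref{p1}--\eqref{p2} enter (the remark following the paper's proof explains that the weaker, augmentation-quantified algebraic condition would not suffice, because breakings at the $Y_+$ level only see part of $\widehat{\ell}_{\phi(\epsilon)}$ after the weight truncation). Your route uses \eqref{p1}--\eqref{p2} only to bound $O(\epsilon_+,p_{\bullet,+})$, so if it worked it would establish the theorem under exactly that weaker hypothesis---a further indication that the last step cannot be repaired as stated.
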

As the planarity for $Y_+$ is solely contributed to by rational curves with $k$ positive punctures and no negative punctures in $\R\times Y_+$ from the assumption that $p^{k,0}_{\bullet,+}(\prod_{i=1}^k q_{\gamma_i})\ne 0$, the idea of the proof is simply viewing such curves as part of the boundary component of rational holomorphic curves with $k$ positive punctures and no negative punctures in $\widehat{X}$. Then the planarity of $Y_-$ will be observed by the other boundary components of those moduli spaces.

\begin{proof}[Proof of \Cref{prop:simple}]
Using the weight in \Cref{def:weight}, the operator determined by  $\overline{\cM}_{X,A}(\Gamma_+,\Gamma_-)$ will be of weight $\int_A \overline{\omega}+\sum_{\gamma\in \Gamma_-}\int \gamma^*\alpha_--\sum_{\gamma\in \Gamma_+} \int \gamma^*\alpha_+$ in the sense of \Cref{def:weight_map}, which is zero whenever $X$ is exact. In particular, the weight of such an operator is determined by the underlying topological data.

Let $\epsilon$ be an augmentation of $\RSFT_{\Lambda}(Y_-)$. We consider the element $v$ in $\overline{\overline{S}V_{\alpha_-}\otimes_{\Q} \Lambda}$ as follows:
\begin{itemize}
    \item It is obtained by gluing one level of $\phi^{k,l}$ and $\mc_k$ from $X$ to the forest representing $q_{\gamma_1}\odot\ldots\odot q_{\gamma_k}$ followed by a level of $\epsilon$.
    \item The glued forest has no cycles, and each tree in the forest has exactly one output leaf.
    \item The first level, i.e.\ the level of  $\phi^{k,l}$ and $\mc_k$, does not change the total weight, that is, the sum of the weight of each operator in the first level is zero. 
\end{itemize}
\begin{figure}[H]
 \begin{center}
		\begin{tikzpicture}
		\node at (0,0) [circle,fill,inner sep=1.5pt] {};
		\node at (1,0) [circle,fill,inner sep=1.5pt] {};
		\node at (2,0) [circle,fill,inner sep=1.5pt] {};
		\node at (3,0) [circle,fill,inner sep=1.5pt] {};
		\node at (4,0) [circle,fill,inner sep=1.5pt] {};

		\draw (2,0) to (2.5,-1) to (3,0);
		\draw (2,-2) to (2.5,-1) to (2.5,-2);
		\draw (2.5,-1) to (3,-2);
		\node at (2.5,-1) [circle, fill, draw, outer sep=0pt, inner sep=3 pt] {};
		
		\node at (0,-2) [circle,fill,inner sep=1.5pt] {};
		\node at (1,-2) [circle,fill,inner sep=1.5pt] {};
		\node at (2,-2) [circle,fill,inner sep=1.5pt] {};
		\node at (3,-2) [circle,fill,inner sep=1.5pt] {};
		\node at (4,-2) [circle,fill,inner sep=1.5pt] {};
		\node at (2.5,-2) [circle,fill,inner sep=1.5pt] {};
  	\node at (5,-2) [circle,fill,inner sep=1.5pt] {};
    	\node at (6,-2) [circle,fill,inner sep=1.5pt] {};
		\node at (7,-2) [circle,fill,inner sep=1.5pt] {};
		
         \draw (0,0) to (0,-2);
		\draw (1,0) to (1,-2);
		\draw (4,0) to (4,-2);
		\draw (5,-2) to (5.5,-1) to (6,-2);
        \draw (7,-2) to (7,-1);

		\node at (0,-1) [circle, fill, draw, outer sep=0pt, inner sep=3 pt] {};
		\node at (1,-1) [circle, fill, draw, outer sep=0pt, inner sep=3 pt] {};
		\node at (4,-1) [circle, fill, draw, outer sep=0pt, inner sep=3 pt] {};
		\node at (5.5,-1) [circle, fill, draw, outer sep=0pt, inner sep=3 pt] {};
		\node at (7,-1) [circle, fill, draw, outer sep=0pt, inner sep=3 pt] {};

        \draw (1,-2) to (1.5,-3) to (2,-2);
        \draw (3,-2) to (3,-3);
        \draw (4,-2) to (4.5,-3) to (5,-2);
        \node at (1.5,-3) [circle, fill, draw, outer sep=0pt, inner sep=3 pt] {};
		\node at (3,-3) [circle, fill, draw, outer sep=0pt, inner sep=3 pt] {};
		\node at (4.5,-3) [circle, fill, draw, outer sep=0pt, inner sep=3 pt] {};

        \node at (0,0.3) {$q_{\gamma_1}$};
        \node at (1,0.3) {$q_{\gamma_2}$};
        \node at (2,0.3) {$q_{\gamma_3}$};
        \node at (3,0.3) {$q_{\gamma_4}$};
        \node at (4,0.3) {$q_{\gamma_5}$};
        \node at (0.5,-1) {$\phi^{1,1}_{A_1}$};
        \node at (1.5,-1) {$\phi^{1,1}_{A_2}$};
        \node at (3,-1) {$\phi^{2,3}_{A_3}$};
        \node at (4.5,-1) {$\phi^{1,1}_{A_4}$};
        \node at (6.2,-1) {$\mathfrak{mc}_{2}^{A_5}$};
        \node at (7.7,-1) {$\mathfrak{mc}_{1}^{A_6}$};
        \node at (1.5,-3.3) {$\epsilon_2$};
        \node at (3,-3.3) {$\epsilon_1$};
        \node at (4.5,-3.3) {$\epsilon_{2}$};
	\end{tikzpicture}
\end{center}
\caption{A component of the output element $v$ in  $\overline{S^4V_{\alpha_-}\otimes_{\Q}\Lambda}$ when $k=5$. The decoration $A_*$ records the topological type of curves underlying the operation, i.e.\ the homology class and asymptotic. Hence, the weight of the operator is determined by $A_*$. Here, a component of $v'$ is the above figure without the last level of augmentation $\epsilon$.}\label{fig:planarity}
\end{figure}
Note that $\epsilon$ may not have pure weight, so $v$ is not necessarily of pure weight. Nevertheless, the output $v'$ before applying the augmentation $\epsilon$ has the same weight as $q_{\gamma_1}\cdots q_{\gamma_k}$. As the operators from the first level all increase the filtration degree from $\Lambda$, i.e.\ the powers of the Novikov variable $T$ appeared in those operators are positive, we know that $v'$ can have at most 
$$N:=\frac{\displaystyle \sum_{i=1}^k \int \gamma_i^*\alpha_+}{\displaystyle\min\left\{\int \gamma^*\alpha_-\left|\gamma \text{ is a Reeb orbit}\right.\right\}}<\infty$$
leaves, which is independent of $\epsilon$. Moreover, as there are only finitely many Reeb orbits on $Y_-$ with period smaller than any given threshold, $v'$ is composed of only finitely many Reeb orbits on $Y_-$. As $v$ must have fewer leaves than $v'$, we can conclude that $v\in \overline{S}V_{\alpha_-}\otimes_{\Q}\Lambda$ instead of the completion. 

We claim that $\widehat{\ell}_{\epsilon,-}(v)=0$. Note that $\widehat{\ell}_{\epsilon,-}(v)$ is computed by gluing a level of $\phi$ and $\mc$ of total weight zero to $q_{\gamma_1}\odot \ldots \odot q_{\gamma_k}$ and a level of $\widehat{p}_-$, then a level of $\epsilon$, such that the glued forest consists of trees of single leaf. As $\widehat{p}_-$ has weight zero, the first two levels correspond to part of the boundary of moduli spaces of one level of $\phi$ and $\mc$ of total weight zero, such that one of the components has virtual dimension $1$ and the rest components have virtual dimension $0$. The rest of the boundary components come from stacking $\widehat{p}_+$ over a level of $\phi$ and $\mc$; then condition \eqref{p1} implies that the virtual count of those boundary components is zero. We note here that even though \Cref{thm:vfc} only states that there exists a virtual count such that $p^{k,l}$ etc. assemble to desired structures, the proof of \Cref{thm:vfc} as well as those in \cite{pardon2019contact} imply more. Namely, the virtual count of the boundary of moduli spaces of virtual dimension $1$ is zero, and is equal to the sum of products of the virtual count of those moduli spaces of virtual dimension $0$ with suitable weights recording the isotropy (i.e.\ multiplicity and permutation, as the boundary is in fact an orbifold fiber product of moduli spaces) appearing in the degeneration, see e.g.\ \cite[(4.39), (4.30)]{pardon2019contact}. Therefore we have $\widehat{\ell}_{\epsilon,-}(v)=0$.

We claim that $\widehat{\ell}_{\bullet,\epsilon,-}(v)\ne 0$. Let $\gamma$ be a path in $X$ connecting a point in $Y_+$ and a point in $Y_-$. We consider the configuration of one level of $\phi,\mc$ and exactly one $\phi_{\bullet}$ or one $\mc_{\bullet}$ (defined in \Cref{eqn:MC_bullet} by counting holomorphic caps in $X$ with a point constraint on $\gamma$) glued to $q_{\gamma_1}\odot \ldots \odot q_{\gamma_k}$, then capped off by $\epsilon$, so that the resulting graph is connected and has no cycles, i.e.\ a tree. We also require that the first level has total weight zero, the virtual dimension of exactly one component is $1$, and virtual dimensions of the remaining components are $0$. Then it has the following boundary components (the input forest is always $q_{\gamma_1}\odot \ldots \odot q_{\gamma_k}$), which will contribute to the counting:
\begin{enumerate}[(a)]
    \item\label{a} Curves corresponding to $p_{\bullet,+}^{k,0}(q_{\gamma_1}\ldots q_{\gamma_k})$.
    \item\label{b} One level of $\widehat{p}_{\bullet,+}$, glued with one level of $\phi$ and $\mc$, with total weight zero, as $p_{\bullet,+}$ has weight zero, and finally a level of $\epsilon$, so that the glued graph is a tree. Note that the non-trivial part in the first level must be $p_{\bullet,+}^{k,l}$ for $l>0$ to glue to such a tree.
    \item\label{c} One level of $\widehat{p}$, glued with one level of $\phi,\mc$ and exactly one $\phi_{\bullet}$ or one $\mc_{\bullet}$ with total weight zero,  then capped off by $\epsilon$, such that the glued graph is a tree.
    \item\label{d} One level of $\phi,\mc$ of total weight zero, glued with one level of $p_{\bullet,-}$, then capped off by $\epsilon$, so that the glued graph is a tree.
    \item\label{e} One level of $\phi,\mc$ and exactly one $\phi_{\bullet}$ or one $\mc_{\bullet}$ with total weight zero, one level of $\widehat{p}_-$ and capped off by $\epsilon$, the glued graph is a tree.
\end{enumerate}
After picking auxiliary choices as in \Cref{thm:vfc}, boundary \eqref{a} counts non-trivially by condition \eqref{p2}; boundary \eqref{b} sums to zero by condition \eqref{p2}; boundary \eqref{c} sums to zero by condition \eqref{p1}; boundary \eqref{d} sums to $\widehat{\ell}_{\bullet,\epsilon,-}(v)$ by construction; boundary \eqref{e} sums to zero as $\epsilon$ is an augmentation to $p_-$. Therefore the claim follows. 

As a consequence, for any augmentation $\epsilon$, the upper bound of planarity of $Y_-$ (with respect to $\epsilon$) is observed by $v$ (depending on $\epsilon$). Hence $\overline{\Pl}_{\Lambda}(Y_-) \le N$ as $v\in \overline{B}^NV_{\alpha_-}\otimes_\Q\Lambda$ for any augmentation $\epsilon$.
\end{proof}

\begin{comment}
\begin{remark}
The conditions in \Cref{prop:simple} are stronger than having an element $x$ in $S^kV_{\alpha_+}\otimes_{\Q}\Lambda$ such that for any filtered augmentation $\epsilon$ of $\RSFT_{\Lambda}(Y_+)$ we have $\widehat{\ell}_{\epsilon,+}(x)=0$ and $\widehat{\ell}_{\bullet,\epsilon,+}(x)\ne 0$. Note that in the proof of \Cref{prop:simple} we use the level of curves in $X$ of total weight change $0$. To analyze breakings in the proof of $\widehat{\ell}_{\epsilon,-}$-closedness of $v$ in \Cref{prop:simple}, breaking at the top-level (i.e.\ $Y_+$) involves only part of $\widehat{\ell}_{\phi(\epsilon_{\mc})}$ for augmentation $\epsilon$ of $Y_-$ as components of $\widehat{\ell}_{\phi(\epsilon_{mc})}$ can change weight ($X$ is not exact and $\epsilon$ can change weight). As a consequence, the algebraic relation $\widehat{\ell}_{\epsilon}(x)=0$ above may not yield the closedness of $v$. On the other hand, the conditions in \Cref{prop:simple} imply that breaking at the top-level vanishes geometrically,  similarly for the non-vanishing of $\widehat{\ell}_{\epsilon,\bullet.-}(v)$, where conditions \Cref{prop:simple} simplify the top level geometrically when we analyze the breakings. The computations of $\Pl$ on explicit examples in \cite{MZ22} are through establishing conditions in \Cref{prop:simple} above.
\end{remark}
\end{comment}

Contact manifolds in \Cref{cor:divsor,cor:BO} all admit strong cobordism to a subcritically fillable contact manifold $\partial(V\times \D)$. More precisely, in \Cref{cor:divsor}, by putting back the divisor $s^{-1}(0)$, we get a strong cobordism to the standard contact sphere; In \Cref{cor:BO}, by attaching a round handle in the sense of \cite[\S 5.1]{MNW}, we get a strong cobordism to  $\partial(\Sigma \times DT^*S^1\times \D)$. By \cite[Theorem L (3)]{MZ22}, we have $\Pl(\partial(V\times \D))=1$ such that conditions in \Cref{prop:simple} hold for $k=1$. However, to get bounds in \Cref{cor:divsor,cor:BO} using the proof of \Cref{prop:simple}, we need to get effective estimates of the period of Reeb orbits, which is not an easy task. On the other hand, those strong cobordisms to $\partial(V\times \D)$ are exact away from a symplectic hypersurface. As a consequence, curves contributing to the Maurer-Cartan element have a positive intersection with the symplectic hypersurface by positivity of intersection. We will run a similar argument as in \Cref{prop:simple} but use this intersection information to get the bounds in \Cref{cor:divsor,cor:BO}.

\begin{proof}[Proof of \Cref{cor:divsor}]
By \cite[Lemma 7.1]{MZ22}, there is an exact cobordism from the ideal boundary of $\CP^n\backslash (s^{-1}(0)\cup H)$ to the ideal boundary $\CP^n\backslash(H_d\cup H)$, where $H_d$ is a smooth degree $d$ hypersurface in $\CP^n$ and $H_d$ is transversal to $H$. Therefore, it suffices to prove the theorem for $s^{-1}(0)=H_d$. Let $Y_d$ denote the ideal boundary of $\CP^n\backslash (H_d\cup H)$. More precisely, following \cite[4 (b)]{Seidel_biased}, $Y_d$, as a contact manifold, is defined as follows. We equip $\cO(d+1)$ with a hermitian metric such that the curvature is $-2(d+1)\mathbf{i}\omega_{FS}$, where $\omega_{FS}$ is the Fubini-Study form on $\CP^n$. Let $\eta$ be a holomorphic section of $\cO(d+1)$ such that $\eta^{-1}(0)=H_d\cup H$, then $h=-\log|\eta|$ is an exhausting function on the affine variety $\CP^n\backslash (H_d\cup H)$ such that $\rd\rd^{c} h = 2(d+1)\omega_{FS}$. Then $Y_d$, as a contact manifold, can be defined as a level set $h^{-1}(C)$ for $C\gg 0$ larger than all the critical values of $h$ with contact form $\rd^{c}h$. Next let $\zeta$ be a holomorphic section of $\cO(1)$ with zero set $H$, then $\zeta^{d+1}$ is a holomorphic section of $\cO(d+1)$ with zero set $H$ of order $d+1$. We have $2(d+1)\omega_{FS}=\rd\rd^{c} h'$, where $h'=-\log|\zeta^{d+1}|$. Note that $(\CP^n\backslash H, 2(d+1)\omega_{FS})$ is symplectomorphic to a Darboux ball of radius $\sqrt{2(d+1)}$. $(\CP^{n}\backslash H\cup h^{-1}(-\infty,C),\omega)$ is a symplectic cobordism $W$ (after we compactify the convex boundary by adding the boundary of the Darboux ball) with symplectic form $\omega=2(d+1)\omega_{FS}$ from $Y_d$ to $(S^{2n-1}, 2(d+1)\alpha_{std})$, where $S^{2n-1}$ is the unit sphere with $\alpha_{std}=\frac{1}{2}\sum_{i=1}^n (x_i\rd y_i-y_i\rd x_i)$. Note that the symplectic form $\omega$ on  $W\backslash H_d$ is exact with a primitive extending the contact form on $Y_d$. Moreover, as $H_d$ is transverse to $H$, $H_d\backslash H$ is a symplectic hypersurface in the interior of $W$ and the closure
$$S:=\overline{H_d\backslash H}$$ is a symplectic hypersurface in $W$ with a convex boundary 
$$\Gamma = \pi^{-1}(H_d\cap H),$$
where $\pi:S^{2n-1}\to H\simeq \CP^{n-1}$ is the Hopf fibration. Now, we slightly perturb the contact form $(2d+2)\alpha_{std}$ to a non-degenerate ellipsoid contact form $\alpha_+$ and perturb the contact form $\rd^{c}h$ on $Y_d=h^{-1}(C)$ to a non-degenerate contact form $\alpha_-$. Therefore we get a strong symplectic cobordism $(W,\omega)$ from $(Y_d,\alpha_-)$ to $(S^{2n-1},\alpha_{+})$ such that $\omega$ is exact away from a symplectic hypersurface $S$ with a primitive extending the contact form $\alpha_-$. We can assume the minimal Reeb orbit $\gamma_{\min}$ of $\alpha_+$ is disjoint from the boundary $\Gamma$ of $S$ in $S^{2n-1}$. It is straightforward to see that $\gamma_{\min}$ has linking number $d$ with $\Gamma$.  

Now, in the curve counting setup, we choose an almost complex structure such that the completion $\widehat{S}\subset \widehat{W}$ of $S\subset W$ is holomorphic. Holomorphic curves whose positive asymptotic Reeb orbits are disjoint from $\Gamma$ must have non-negative intersections with $\widehat{S}$. All curves we will consider satisfy this condition. Moreover, as the non-exactness of $W$ is concentrated in $S$, curves contributing to the Maurer-Cartan element must have positive intersection with $\widehat{S}$. From the proof of \cite[Theorem L/Theorem 6.6]{MZ22}, there exists an almost complex structure (actually it works for any generic almost complex structure) on the symplectization of $(S^{2n-1},\alpha_+)$ such that $p^{1,l}_+(q_{\gamma_{\min}})=0$, $p^{1,0}_{\bullet,+}(q_{\gamma_{\min}})=1$ and $p^{1,l}_{\bullet,+}(q_{\gamma_{\min}})=0$ for $l>0$. In fact, when the count is zero, we can actually make the moduli space empty in this case. 

Now let $\epsilon$ be an augmentation of $\RSFT_{\Lambda}(Y_d)$. We consider $v\in \overline{\overline{S}V_{\alpha_-}\otimes_{\Q}\Lambda}$ obtained as follows:
\begin{itemize}
    \item It is obtained from gluing one level of $\phi$ and $\mc$ from $W$  to $q_{\gamma_{\min}}$, then capped off by a level of $\epsilon$.
    \item The glued graph is a forest of trees of exactly one output leaf.
    \item Total intersection number of the level of $\phi$ and $\mc$ with $\widehat{S}$ is $d$.
\end{itemize}
This is completely analogous to the definition of $v$ in the proof of \Cref{prop:simple} (i.e.\ \Cref{fig:planarity}) by replacing the weight condition with the intersection condition. Then the positivity of intersection of curves in $\mc$ with $\widehat{S}$ implies that $v$ has length at most $d+1$. 

Next, we will prove that $v\in \overline{B}^{d+1}V_{\alpha_-}\otimes_{\Q}\Lambda$ instead of the completion. Let $u$ be a holomorphic curve in $\widehat{W}$ asymptotic to $\Gamma_+$ and $\Gamma_-$ near positive and negative punctures such that orbits in $\Gamma_+$ are disjoint from the contact submanifold $\Gamma$, and the intersection number between $u$ and $\widehat{S}$ is $k\ge 0$. We first assume $\alpha_+$ is the round contact form, i.e.\ induced from $\rd^{c}h'$, then we claim that 
\begin{equation}\label{eqn:energy}
\int u^*\overline{\omega} = \sum_{\gamma\in \Gamma_+}\int \gamma^*\alpha_+-\sum_{\gamma\in \Gamma_-}\int \gamma^*\alpha_-+2k\pi+\sum_{\gamma\in \Gamma_+}\int \beta^*\gamma,
\end{equation}
where $\beta$ is a closed $1$-form on $S^{2n-1}\backslash \Gamma$. To see it, as $\int u^*\overline{\omega}$ only depends on the homotopy class of $u$ (as a map asymptotic to those Reeb orbits), we may slide the intersections of $u$ with $\widehat{S}$ inside $W$ and assume $u$ has such a property initially. Let $\lambda =\rd^{c}h$, which is defined on $W\backslash S$, and $\lambda'=\rd^{c}h'$, which is defined on $W$. $\lambda-\lambda'$ is closed on $W\backslash S$ by construction. In particular, as $\lambda'$ restricts to the round contact form $\alpha_+$ on $S^{2n-1}$, we get that $\lambda|_{S^{2n-1}\backslash \Gamma}=\alpha_+|_{S^{2n-1}\backslash \Gamma}+\beta$ for $\beta$ closed in $\Omega^1(S^{2n-1}\backslash \Gamma)$. Strictly speaking, we need to perturb the contact form to be non-degenerate, e.g.\ we can push the boundary inside $W$ generically. Such a modification will not affect the argument below, hence we omit the change. 

Now, as $u$ intersects $S$ at isolated points,  Stokes's formula implies  that 
\begin{eqnarray*}
    \int u^*\overline{\omega} & = & \int_{u^{-1}(\R_+\times S^{2n-1})} u^*(\rd\alpha_+) + \int_{u^{-1}(W\backslash S)} u^*(\rd\rd^{c}h)+\int_{u^{-1}(u^{-1}(\R_-\times Y_d))} u^*(\rd\alpha_-) \\
    &= & \sum_{\gamma\in \Gamma_+}\int \gamma^*\alpha_+-\int_{u^{-1}(\partial_+(W\backslash S)\subset \{0\}\times S^{2n-1})} u^*\alpha_+ \\
    &  & +\int_{u^{-1}(\partial_+(W\backslash S))} u^*\lambda-\int_{u^{-1}(\partial_-(W\backslash S)=\{0\}\times Y_d)} u^*\lambda - \sum_{p, u(p)\in S} \lim_{r\to 0} \int_{S^1} u(p+re^{\mathbf{i}\theta})^*\rd^{c}h\\
    &  & + \int_{u^{-1}(\{0\}\times Y_d)} u^*\alpha_--\sum_{\gamma\in \Gamma_-}\int \gamma^*\alpha_-.
\end{eqnarray*}
where $\partial_{\pm}$ denotes the convex/concave boundary of the cobordism. 
Since $\lambda|_{\partial_-(W\backslash S)}=\alpha_-$, $\lambda|_{\partial_+(W\backslash S)}=\alpha_+|_{S^{2n-1}\backslash \Gamma}+\beta$ and $\beta$ is closed, the above formula can be reduced to 
\begin{equation}
    \int u^*\overline{\omega} = \sum_{\gamma\in \Gamma_+}\int \gamma^*\alpha_+-\sum_{\gamma\in \Gamma_-}\int \gamma^*\alpha_-+\sum_{\gamma\in \Gamma_+}\int \beta^*\gamma - \sum_{p, u(p)\in S} \lim_{r\to 0} \int_{S^1} u(p+re^{\mathbf{i}\theta})^*\rd^{c}h
\end{equation}
To compute the last term, it suffices to do it in a standard holomorphic model, as $\int u^*\overline{\omega}$ is topological; we can certainly arrange the intersection to be in the holomorphic model. More specifically, $W$ near $u(p)$ is modeled on a neighborhood of $0$ in $\C^n$ by coordinates $(z,w)\in \C\times \C^{n-1}$, over which the line bundle is trivial. The holomorphic section $\eta$ is given by $z$ and $S$ is $z=0$. The Hermitian metric is $e^{2\phi}z\overline{z}$. Then $\lambda = \rd^{c} (-\log |z|+\phi)$. The map $u$ near $p=0$ is modeled by $z\mapsto (z^m,f(z))$, where $f(0)=0$ and the intersection number with $S$ is $m$. Then it is straightforward to compute that 
$$\lim_{r\to 0} \int_{S^1} u(p+re^{\mathbf{i}\theta})^*\rd^{c}h= \lim_{r\to 0} \int_{S^1} u(re^{\mathbf{i}\theta})^*\rd^{c}(-\log|z|+\phi)=- 2m\pi,$$
which is independent of $\phi$. Then the claim on $\int u^*\overline{\omega}$ follows. As the total intersection number with $\widehat{S}$ is at most $d$ and the input from $S^{2n-1}$ is only $\gamma_{\min}$, the positivity of $\int u^*\overline{\omega}$ implies that $\Gamma_-$ appeared in $v$ can only involve finitely many Reeb orbits of $Y_d$. As a consequence, $v\in \overline{B}^{d+1}V_{\alpha_-}\otimes_{\Q}\Lambda$.

Then similar to the proof of \Cref{prop:simple}, instead of requiring the level in the cobordism having total weight zero, we require the level in the cobordism having total intersection number $d$ with $\widehat{S}$ to conclude that the upper bound of planarity of $Y_d$ and $\epsilon$ is supplied by $v$, i.e.\ $\Pl_{\Lambda}(Y_d)\le d+1$.
\end{proof}

Given a contact open book $\OB(\Sigma,\phi)$ with page a Liouville domain $\Sigma$ and monodromy $\phi$ in the compactly supported symplectic mapping class group, Bourgeois \cite{Bourgeois} constructed a contact structure on $\OB(\Sigma,\phi)\times T^2$, which we shall denote by $\BO(\Sigma,\phi)$. A basic example is when $\phi=\Id$, $\BO(V,\Id)$ is the contact boundary of $\Sigma \times DT^*T^2$. 
\begin{proof}[Proof of \Cref{cor:BO}]
Following \cite{BGM}, as $\BO(\Sigma,\phi)$ contains the contactization $(S^1_t\times \Sigma \times DT^*S^1,\rd t + \lambda_{\Sigma \times DT^*S^1})$, $c_1(\BO(\Sigma,\phi))=0$ implies that $c_1(\Sigma)=0$. By attaching a round handle in the sense of \cite[\S 5.1]{MNW}, we get a strong cobordism $W$ from $\BO(\Sigma,\phi)$ to $\partial(\Sigma \times DT^*S^1 \times \D)$, see \cite{BGM} for details. The cobordism $W$ enjoys similar properties as the cobordism in the proof of \Cref{cor:divsor}, namely, by \cite[Theorem 5.1 and its proof]{MNW}:
\begin{enumerate}
    \item $W$ is exact away from a symplectic hypersurface ($\simeq \Sigma \times DT^*S^1$);
    \item The intersection of the symplectic hypersurface with the convex boundary is $\partial(\Sigma \times DT^*S^1) \times \{0\}$;
    \item A primitive of the symplectic form on this complement can be chosen as an extension of the contact form on the concave boundary.
\end{enumerate}
In the case of $\phi=\Id$, this is simply the strong cobordism from $\partial(\Sigma \times DT^*T^2)$ to $(\Sigma \times DT^*S^1\times \D)$ induced from the cobordism from $\partial DT^*S^1$ to $\partial \D$, i.e.\ the complement of a Weinstein neighborhood of a Lagrangian $S^1$ in $\D$. By \cite[Theorem L/Theorem 6.6 and its proof]{MZ22}, the planarity $1$ of  $\partial(\Sigma \times DT^*S^1 \times \D)$ is contributed by a Reeb orbit contained in $\Sigma \times DT^*S^1\times S^1\subset \partial(\Sigma \times DT^*S^1 \times \D)$ enjoying the same algebraic properties of $\gamma_{\min}$ in the proof of \Cref{cor:divsor}, namely
\begin{enumerate}
    \item $p^{1,0}_{\bullet}(q_{\gamma_{\min}})=1$;
    \item $p^{1,l}(q_{\gamma_{\min}})=0$ for $l\ge 0$ and the relevant moduli spaces are empty;
    \item $p^{1,l}_{\bullet}(q_{\gamma_{\min}})=0$ for $l>0$ and the relevant moduli spaces are empty;
    \item The linking number of $\gamma_{\min}$ with $\partial(\Sigma \times DT^*S^1) \times \{0\}$ is $1$. 
\end{enumerate}
Then by an identical argument as in \Cref{cor:divsor}, we get that $\Pl_{\Lambda}(\BO(\Sigma,\phi))\le 2$.
\end{proof}

We need the following result to complete the proof of \Cref{thm:no_cofilling}.
\begin{theorem}[{\cite[Theorem 7.14]{MZ22}}]\label{thm:hyperbolic}
    Assume $Y$ is the Boothby-Wang contact structure over a symplectic manifold $(X,\omega,J)$ such that there are no $J$-holomorphic spheres in $X$. Then $\Pl(Y)=\infty$. 
\end{theorem}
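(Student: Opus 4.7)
The plan is to exploit the Boothby--Wang projection $\pi: Y \to X$ together with the no-sphere hypothesis to rule out the existence of any bounding chain producing $1$. First I would choose a split almost complex structure $\widetilde{J}$ on the symplectization $\mathbb{R} \times Y$ that is invariant under the $\mathbb{R} \times S^1$-action (translation and fiber rotation) and projects to $J$ on $X$, so that $\pi: \mathbb{R} \times Y \to X$ is $(\widetilde{J}, J)$-holomorphic. Since the Reeb orbits of the Boothby--Wang form are iterates of $S^1$-fibers, the asymptotic ends of any $\widetilde{J}$-holomorphic punctured sphere $u$ are mapped to points by $\pi$; hence $\pi \circ u$ extends, by removable singularities, to a $J$-holomorphic sphere $\mathbb{CP}^1 \to X$, which by hypothesis must be constant. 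This forces every contributing punctured sphere in the symplectization to lie entirely in a single cylinder $\mathbb{R} \times \pi^{-1}(x)$ and be a branched cover of it. The same projection argument, applied to any cobordism realizing an augmentation $\epsilon$, shows that rigid augmentation curves project to at most finitely many image points in $X$.

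Next, I would argue $\Pl(Y) = \infty$ by contradiction. Suppose $\Pl(Y) \le N$ for some finite $N$. Then there exist an augmentation $\epsilon$ and a point $o \in Y$, together with $v \in \overline{S}^{\le N} V_\alpha$ of degree $0$, such that $\widehat{\ell}_\epsilon(v) = 0$ and $\widehat{\ell}_{\bullet, \epsilon}(v) = 1$ in homology. Since the order is a max over points $o$, I am free to choose $o$ so that $x := \pi(o)$ is generic, in particular disjoint from the finitely many projected images of rigid augmentation curves with nonconstant projection. By the projection step, the only contributions to $\widehat{\ell}_{\bullet, \epsilon}(v)$ then come from configurations lying over $x$, i.e., trees of branched covers of the single fiber cylinder $\mathbb{R} \times \pi^{-1}(x)$, glued along iterates of the fiber and capped by the (fiber-constant) augmentation pieces. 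Such configurations retain the residual $S^1$-symmetry from rotation of the fiber over $x$, which even after the single point constraint leaves a positive-dimensional family generically, so the signed count of rigid such objects must vanish and cannot equal $1$, a contradiction.

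The main obstacle is the compatibility of this projection picture with the virtual perturbations required in \Cref{thm:vfc}. One must ensure that the abstract perturbation scheme can be chosen $S^1$-equivariantly along the fibers of $\pi$, so that fiber-symmetry cancellations are preserved and no spurious rigid configurations with nontrivial projection to $X$ are introduced by the perturbation. In practice this can be handled either via an equivariant refinement of Pardon's construction, or by a dimension/transversality argument showing that under the no-$J$-sphere hypothesis the non-fiber-contained perturbed solutions are automatically non-rigid for a suitable cofinal sequence of perturbations. A secondary subtlety is dealing with arbitrary algebraic augmentations, not only those arising geometrically: one must verify that, purely from the augmentation equation and the constraint that all $p^{k,l}$ vanish on orbits whose generating chains live only over a fixed $x$, the resulting linearized pointed image is confined to a proper subspace of $\bk$ not containing $1$.
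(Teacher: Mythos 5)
Your projection step (an $S^1$- and $\R$-invariant lift of $J$ makes $\pi:\R\times Y\to X$ holomorphic, punctures are removable, so every rational curve in the symplectization has constant projection) is the right starting point, but the argument you build on it has two genuine gaps and misses the simplification that makes the theorem almost formal. First, $\Pl(Y)$ is a \emph{maximum} over $\Aug_{\Q}(V_\alpha)$, with the maximum of the empty set declared to be $0$; so $\Pl(Y)=\infty$ in particular asserts that an augmentation exists, and your contradiction setup (``suppose $\Pl(Y)\le N$; then there exist $\epsilon$, $o$ and $v$ with $\widehat{\ell}_{\bullet,\epsilon}(v)=1$'') produces no such $v$, hence no contradiction, in the case $\Aug_{\Q}(V_\alpha)=\emptyset$. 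You must exhibit an augmentation, and the hypothesis hands you one: since $\rd\alpha=\pi^*\omega$, any rational curve $u$ with no negative punctures has $\int u^*\rd\alpha=\int(\pi\circ u)^*\omega>0$, so its projection would be a non-constant $J$-sphere; hence every level of every building in $\overline{\cM}_{Y,A}(\Gamma^+,\emptyset)$ is excluded, these compactified moduli spaces are empty, $p^{k,0}=0$ for all $k$, and $\epsilon=0$ is an augmentation.

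Second, once you are free to take $\epsilon=0$, the entire second half of your argument is unnecessary and, as written, unsound. For $\epsilon=0$ the cap map $\widehat{\ell}_{\bullet,0}$ reduces to $\sum_k p^{k,0}_{\bullet}$, which counts rational curves with no negative punctures passing through $(0,o)$; these moduli spaces are empty by the same energy identity, so $\widehat{\ell}_{\bullet,0}=0$ identically, $O(V_\alpha,0,p_{\bullet})=\infty$, and $\Pl(Y)=\infty$ with no perturbation-theoretic input (the virtual count of an empty compactified moduli space is zero for any choice of auxiliary data in \Cref{thm:vfc}). Your route instead needs (i) vanishing of the counts of branched covers of fiber cylinders via a residual $S^1$-symmetry, which you yourself flag as requiring an equivariant refinement of the virtual machinery that is not available, and (ii) control of arbitrary algebraic augmentations, which you explicitly leave open; note that for a general $\epsilon$ the composites $\epsilon^l\circ p^{k,l}_{\bullet}$ with $l\ge 1$ need not vanish for free, so it is genuinely only the freedom to \emph{choose} the augmentation in \Cref{def:planarity} that rescues the statement. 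Finally, the claim that ``rigid augmentation curves project to at most finitely many image points in $X$'' presupposes a geometric augmentation coming from a cobordism (and even then curves in the cobordism do not project to $X$); augmentations in the definition of $\Pl$ are purely algebraic, so this step does not apply.
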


\begin{proof}[Proof of \Cref{thm:no_cofilling}]
    By \cite[Lemma 2.5, 2.12]{MR4146343}, $Y_{d,g}$ is not cofillable if and only if $d>2g-2$. By \Cref{thm:hyperbolic} and \Cref{prop:simple}, there is no strong cobordism from $Y_{d,g}$ to $(S^3,\xi_{\mathrm{std}})$ for any $d>0$. 
\end{proof}

\begin{remark}
An alternate proof of the non-existence of a cobordism from $Y_{d,g}$ to $(S^3,\xi_{\mathrm{std}})$ can be obtained as a corollary of the results of Eliashberg \cite{MR1171908}, Gromov \cite{gromov1985pseudo}, and McDuff \cite{MR1049697}. If there is a strong cobordism from $Y_{d,g}$ to $(S^3,\xi_{\mathrm{std}})$, then we can glue it with the natural disk bundle filling of $Y_{d,g}$ to get a strong filling $W$ of  $(S^3,\xi_{\mathrm{std}})$, which is diffeomorphic 
    $$B^4\# \overline{\CP^2}\ldots \# \overline{\CP^2}.$$
    Assume the zero section $\Sigma_g$ of the disk bundle filling is represented by $A=\sum k_i E_i$ in homology, where $E_i$ are exceptional classes. Then
    we have 
    $$-\sum k^2_i=A\cdot A = \langle c_1(W),A\rangle +2g-2 = \sum k_i^2+2g-2 $$
    which is impossible if $g\ge 1$. 
\end{remark}
\section{Generalized constraints and higher genus invariants}\label{s5}
There are several directions in which one can generalize the constructions in \cite{MZ22}. In the following, we will briefly describe two directions. The first one is through considering more general constraints than a point, including constraints from general cycles in $Y$, constraints with additional tangency conditions, and multiple constraints. Similar invariants/curves, from a quantitative perspective, were already considered in \cite{CDPT,cieliebak2018punctured,McSi,siegel2019higher}. The second direction considers the full SFT, i.e.\ including curves of all genera. For this part, we will only consider the algebraic theory that should model the full SFT. It will be a generalization of $BL_\infty$ algebras introduced in \S \ref{ss:BL} and a reformulation of the $IBL_\infty$ algebras introduced in \cite{cieliebak2015homological}. From the analytical perspective, embodying those algebraic structures ($IBL_\infty$ structures) with geometric content in SFT is beyond what was established in \cite{MZ22,pardon2016algebraic,pardon2019contact}, see \Cref{rmk:glue}. From the geometric perspective, even if we assume SFT can be phrased using (cocurved) $IBL_\infty$ algebra (\Cref{def:curved_IBL}) introduced in this paper, there are no known examples exhibiting finite torsion or planarity from non-rational curves. We hope to investigate them further in future work.

\subsection{Generalized constraints}\label{ss:genralized}
\subsubsection{General constraints from $Y$}\label{ss:511}
We can pick a closed submanifold of $Y$ or more generally a closed singular chain $C$ of $Y$ to construct a pointed morphism (\Cref{def:pointed}) $p_{C}$ by considering rational holomorphic curves in $\R\times Y$ with one marked point mapped to $\{0\}\times C$. That is, we have a family of operators
$$p^{k,l}_C:S^kV \mapsto S^l V, \quad k\ge 1, l\ge 0$$
such that $\widehat{p}_C\circ \widehat{p}=(-1)^{|\dim C|}\widehat{p}\circ \widehat{p}_C$.

Then given a $BL_\infty$ augmentation $\epsilon$, we have an induced chain map $\widehat{\ell}_{C,\epsilon}:(\overline{B}^k V_\alpha, \widehat{\ell}_\epsilon)\to \Q$ defined using $p^{k,0}_{C,\epsilon}$ (see discussion after \Cref{def:pointed}). When $X$ is an exact cobordism from $Y_-$ to $Y_+$, let $C_-,C_+$ be two closed singular chains in $Y_-,Y_+$, which define two pointed morphisms $p_{C_-},p_{C_+}$. If $C_-$ and $C_+$ are homologous in $X$, then one can show that $p_{C_-},p_{C_+}$ and the $BL_\infty$ morphisms induced from $X$ are compatible in the sense of Definition \ref{def:compatible} by counting holomorphic curves in $\widehat{X}$ with a point passing through a singular chain whose boundary is $-C_-\sqcup C_+$. By the same reasoning, the homotopy class of the chain map $\widehat{\ell}_{C,\epsilon}$ depends only on the homology class $[C]$. Therefore for any $k\ge 1$, we have a linear map
$$\delta^{k,\vee}_{\epsilon}:H_*(\overline{B}^kV_{\alpha},\widehat{\ell}_{\epsilon})\otimes H_*(Y,\Q) \to \Q, \quad (x,[C])\mapsto \widehat{\ell}_{C,\epsilon}(x).$$
Or equivalently, we can write it as $\delta^k_{\epsilon}:H_*(\overline{B}^kV_\alpha,\widehat{\ell}_{\epsilon})\to H^{*}(Y,\Q)$. Here the grading is in the $\Z/2$ sense\footnote{As we graded the $BL_\infty$ algebra in $\Z/2$. In the case of $\Z$-grading by the SFT degree, the structural maps $p^{k,l}$ will have grading other than $-1$. $\delta^k_{\epsilon}$ will also have a grading shift, which can be deduced from the formula of virtual dimensions.}. 

Combining the method in \cite{bourgeois2009exact} and the argument in \S5 of \cite{MZ22}, one can show that 
$$\delta^1_{\epsilon}:H_*(\overline{B}^1V_\alpha,\widehat{\ell}_{\epsilon})\to H^*(Y,\Q)$$ 
is isomorphic to the composition of the maps 
$$SH_{+,S^1}^*(W;\Q)\to H^{*}_{S^1}(W;\Q):=H^{*+1}(W;\Q)\otimes_\Q \Q[U,U^{-1}]/[U]\to H^{*+1}(Y;\Q),$$
where $W$ is an exact filling and $\epsilon$ is the augmentation induced from the filling $W$\footnote{Note that symplectic cohomology in \cite{zhou2019symplectic} is graded by $n-\mu_{CZ}$, which explains the parity discrepancy.}. Here $U$ is a formal variable of degree $2$, which shall be viewed as the generator of $H^*(BS^1)$. The $S^1$-equivariant symplectic cohomology $SH_{+,S^1}^*(W;\Q), H^{*}_{S^1}(W;\Q)$ are modules over $H^*(BS^1)$\footnote{There are three versions of $S^1$-equivariant theory associated to an $S^1$-complex, as the $S^1$-equivariant symplectic cohomology is actually the positive version, see \cite[\S 2]{zbMATH07134620}, we have $H^{*}_{S^1}(W;\Q):=H^{*+1}(W;\Q)\otimes_\Q \Q[U,U^{-1}]/[U]$ instead of the usual $S^1$-equivariant cohomology for $W$ with trivial $S^1$-action.}. This fact was used in \cite{zhou2019symplecticI,zhou2019symplectic} (but not phrased in SFT) to define obstructions to Weinstein fillings. In principle, $H_*(\overline{B}^kV_\alpha,\widehat{\ell}_{\epsilon})\to H^{*}(Y,\Q)$ can be used to obstruct Weinstein fillings. As the map factors through the filling $W$ if the augmentation arises from the filling and $H^*(W)=0$ when $*>\frac{\dim W}{2}$ and $W$ is Weinstein, if the image contains an element of degree $>\frac{\dim Y+1}{2}$ for all possible augmentations that could be from a Weinstein filling, then $Y$ has no Weinstein filling. To sum up, we have the following lemma.

\begin{lemma}\label{lemma:obstruction}
If the image of $\delta^k_{\epsilon}:H_*(\overline{B}^kV_\alpha,\widehat{\ell}_{\epsilon})\to H^{*}(Y,\Q)$ contains an element with a non-trivial component of degree $>\frac{\dim Y+1}{2}$ for any augmentation $\epsilon$, then $Y$ has no Weinstein filling.
\end{lemma}
\begin{remark}
    We do not discuss $\Z$-graded $BL_\infty$ algebras in this paper. But in the $\Z$-graded cases (where $p^{k,l}$ will have degree shift depending on $k$ and $\dim Y$), \Cref{lemma:obstruction} holds for using only $\Z$-graded $BL_\infty$ augmentation $\epsilon$ ($\epsilon^k$ also has degree shift depending on $k$ and $\dim Y$) when $c_1(Y)=0$ and $\dim Y\ge 5$. As in those cases, augmentation from a Weinstein filling must be $\Z$-graded.
\end{remark}

In the following, we will explain the functorial part of $\delta_{\epsilon}$ without proof. Given an exact cobordism $X$ from $Y_-$ to $Y_+$, let $C$ be a closed chain in $X$. Then by counting rational holomorphic curves in $X$ with a marked point mapped to $C$, we obtain a family of maps $\phi_C^{k,l}:S^kV_{\alpha_+}\to S^lV_{\alpha_-}$. Along with the $BL_{\infty}$ morphism $\phi^{k,l}$ from $X$, we can construct a map $\widehat{\phi}_C:EV_{\alpha_+}\to EV_{\alpha_-}$ from $\phi^{k,l}_C,\phi^{k,l}$ by the same rule as $\widehat{\phi}_{\bullet}$. Now that $C$ is closed, we have $\widehat{\phi}_C\circ \widehat{p}_+=\widehat{p}_-\circ \widehat{\phi}_C$. Then given an augmentation $\epsilon$ of $V_{\alpha_-}$, we have a linearized relation $\widehat{\phi}_{C,\epsilon}\circ \widehat{p}_{+,\epsilon \circ \phi}=\widehat{p}_{-,\epsilon}\circ \widehat{\phi}_{C,\epsilon}$, where $\widehat{\phi}_{C,\epsilon}= \widehat{F}_{\epsilon}\circ \widehat{\phi}_C \circ \widehat{F}_{-\epsilon\circ \phi}$. In particular, $\widehat{\phi}^{k,0}_{C,\epsilon}$ defines a chain map $(\overline{B}^kV_{\alpha_+},\widehat{\ell}_{\epsilon\circ \phi})\to \Q$. By a similar argument as before, such a construction yields a map

$$\delta^{k,\vee}_{X,\epsilon}: H_*(\overline{B}^kV_{\alpha_+},\widehat{\ell}_{\epsilon\circ \phi})\otimes H_*(X;\Q)\to \Q$$ 
where the dual version $H_*(\overline{B}^kV_{\alpha_+},\widehat{\ell}_{\epsilon\circ \phi}) \to H^*(X;\Q)$ is denoted by $\delta^k_{X,\epsilon}$. If $C$ comes from $H_*(Y_+)$, then $\delta^{k,\vee}_{X,\epsilon}(- \otimes C)$ coincides with $\delta^{\vee}_{\epsilon\circ \phi}(-\otimes C )$ by the same argument as Proposition 5.14 in \cite{MZ22}. If $C$ comes from $Y_-$, then $\delta^{k,\vee}_{X,\epsilon}$ factors through $\widehat{\phi}_{\epsilon}:H_*(\overline{B}^kV_{\alpha_+},\widehat{\ell}_{\epsilon \circ \phi }) \to H_*(\overline{B}^kV_{\alpha_-},\widehat{\ell}_{\epsilon })$ and $\delta^{k,\vee}_{X,\epsilon}(- \otimes C)=\delta^{k,\vee}_{\epsilon}(\widehat{\phi}_\epsilon(-)\otimes C)$ by an argument similar to \cite[Proposition 5.14]{MZ22}. Dualizing those properties, we have the following commutative diagram:

$$\xymatrix{
H_*(\overline{B}^kV_{\alpha_+},\widehat{\ell}_{\epsilon \circ \phi }) \ar[r]_{\quad \delta^k_{X,\epsilon}}\ar[d]^{\widehat{\phi}_{\epsilon}}\ar@/^2pc/[rr]^{\delta^k_{\epsilon\circ \phi}} & H^{*}(X;\Q) \ar[r]\ar[d] & H^*(Y_+;\Q)\\
H_*(\overline{B}^kV_{\alpha_-},\widehat{\ell}_{\epsilon }) \ar[r]^{\delta^k_{\epsilon}} & H^{*}(Y_-;\Q)}$$

\subsubsection{Tangency conditions}\label{ss:512}
Another type of generalization is considering point constraints with tangency conditions, i.e.\ we consider curves in the symplectization passing through a fixed point $p$ and tangent to a local divisor near $p$ with order $m$. Such holomorphic curves were considered in \cite{CDPT,cieliebak2018punctured,McSi,siegel2019higher}. These curves also give rise to pointed morphisms and hence can be used to define a new algebraic order. In many cases, if we have a holomorphic curve with a point constraint without tangent conditions, then multiple covers of it might have tangent properties. One can show that the algebraic order with a tangency condition for $(S^{2n-1},\xi_{std}),n\ge 2$ is always $1$, no matter what the order of tangency is. Here we give a brief explanation: we claim that for any augmentation, there is a Reeb orbit $\gamma$ such that
\begin{enumerate}
    \item $\gamma$ represents a closed class in the linearized contact homology.
    \item The count of rigid rational curves with one positive puncture asymptotic to $\gamma$, a point constraint with tangent order $r$, possibly with negative punctures that are capped off by the augmentation $\epsilon$, is non-zero.
\end{enumerate}
In a very thin ellipsoid, the Reeb orbit is $\gamma_{\min}^{r+1}$, where $\gamma_{\min}$ is the minimal Reeb orbit. The Conley-Zehnder index of $\gamma_{\min}^{r+1}$ is $n+1+2r$. As all Reeb orbits have even SFT degree, $\gamma_{\min}^{r+1}$ is closed in the linearized contact homology for any augmentation. To see the second claim, those with negative punctures are of negative virtual dimension. For the case without negative punctures, we first consider curves in the thin ellipsoid. By putting the point constraint on the zero and using the complex hypersurface complement to the shortest Reeb orbit, there is one curve passing through the zero with tangent order $r$, namely the $(r+1)$-fold branched cover of the coordinate plane with one branching point that passes through the point constraint. If we use a product complex structure on $\C^n$, then this is the only curve with such properties. A suitable continuation argument shows that the algebraic count of such curves is still one if we use a usual almost complex structure that is compatible with the contact structure.  Then one can use a neck-stretching argument to prove that it is contained in the symplectization and independent of the augmentation, as there is no room in virtual dimension to support a curve with both tangent constraint and negative punctures. 

Invariants with local tangent constraints for exact domains are defined and computed in \cite{LC}.

\subsubsection{Multiple point constraints}\label{ss:513}
It is natural to consider generalizations of pointed morphisms of $BL_\infty$ algebras to maps induced from counting curves with multiple constraints. For example, we can consider rational holomorphic curves with $2$ marked points passing through two fixed points in the contact manifold, where the curve can be disconnected. More specifically, we have three maps from $S^kV$ to $S^lV$. Namely, we have $p_{\bullet\bullet}^{k,l}$ from counting connected holomorphic curves with two marked points, $p_{\bullet_1}^{k,l},p_{\bullet_2}^{k,l}$ coming from counting connected holomorphic curves with each one of the point constraints respectively. Then we can assemble them to $\widehat{p}_{\bullet\bullet}$ by the same rule as $\widehat{p}_{\bullet}$ except the middle level consists of one $p_{\bullet\bullet}$ or both $p_{\bullet_1},p_{\bullet_2}$. 

\begin{figure}[H]
	\begin{center}
		\begin{tikzpicture}
		\node at (0,0) [circle,fill,inner sep=1.5pt] {};
		\node at (1,0) [circle,fill,inner sep=1.5pt] {};
		\node at (2,0) [circle,fill,inner sep=1.5pt] {};
		\node at (3,0) [circle,fill,inner sep=1.5pt] {};
	
		\draw (0,0) to (0.5,1) to (1,0);
		\draw (2,0) to (2.5,1) to (3,0);
	
		\draw (2,0) to (1.5,-1) to (2,-2);
		\draw (1,0) to (1.5,-1) to (1,-2);
		\node at (1.5,-1) [circle, fill=white, draw, outer sep=0pt, inner sep=5 pt] {};
        \node at (1.4,-1) [circle,fill,inner sep=1.5pt] {};
        \node at (1.6,-1) [circle,fill,inner sep=1.5pt] {};
		\node at (2.1,-1) {$p^{2,2}_{\bullet\bullet}$};

		\node at (0,-2) [circle,fill,inner sep=1.5pt] {};
		\node at (1,-2) [circle,fill,inner sep=1.5pt] {};
		\node at (2,-2) [circle,fill,inner sep=1.5pt] {};
		\node at (3,-2) [circle,fill,inner sep=1.5pt] {};

		\draw[dashed] (0,0) to (0,-2);
		\draw[dashed] (3,0) to (3,-2);
		\end{tikzpicture}
        \hspace{1cm}
        \begin{tikzpicture}
		\node at (0,0) [circle,fill,inner sep=1.5pt] {};
		\node at (1,0) [circle,fill,inner sep=1.5pt] {};
		\node at (2,0) [circle,fill,inner sep=1.5pt] {};
		\node at (3,0) [circle,fill,inner sep=1.5pt] {};
	
		\draw (0,0) to (0.5,1) to (1,0);
		\draw (2,0) to (2.5,1) to (3,0);
	
		\draw (2,0) to (1.5,-1) to (2,-2);
		\draw (1,0) to (1.5,-1) to (1,-2);
        \draw (3,0) to (3,-2);
		\node at (1.5,-1) [circle, fill=white, draw, outer sep=0pt, inner sep=5 pt] {};
        \node at (1.5,-1) [circle,fill=blue,inner sep=1.5pt] {};
        \node at (3,-1) [circle, fill=white, draw, outer sep=0pt, inner sep=5 pt] {};
        \node at (3,-1) [circle,fill=red,inner sep=1.5pt] {};
		\node at (2.1,-1) {$p^{2,2}_{\bullet_1}$};
        \node at (3.6,-1) {$p^{1,1}_{\bullet_2}$};

		\node at (0,-2) [circle,fill,inner sep=1.5pt] {};
		\node at (1,-2) [circle,fill,inner sep=1.5pt] {};
		\node at (2,-2) [circle,fill,inner sep=1.5pt] {};
		\node at (3,-2) [circle,fill,inner sep=1.5pt] {};

		\draw[dashed] (0,0) to (0,-2);
		\end{tikzpicture}
	\end{center}
	\caption{Two components of $\widehat{p}_{\bullet\bullet}$ with input from $S^2V\odot S^2V$ and output in $S^4V$.}
\end{figure}

Note that the combinatorics behind $\widehat{p}_{\bullet\bullet}$ is slightly different from $\widehat{p}_{\bullet}$ and $\widehat{p}$, as it can have disconnected curves, i.e.\ the term involving both $p_{\bullet_1},p_{\bullet_2}$. It is also different from the combinatorics behind $\widehat{\phi}$ for $BL_\infty$ morphism, as the (non-trivial) curves can only have at most two connected components. Nevertheless, we have $\widehat{p}_{\bullet\bullet}\circ \widehat{p}=\widehat{p}\circ \widehat{p}_{\bullet\bullet}$ and given a $BL_\infty$ augmentation, we can similarly define $\widehat{p}_{\bullet\bullet,\epsilon}$, which is assembled from $p_{\bullet\bullet,\epsilon},p_{\bullet_1,\epsilon}$ and $p_{\bullet_2,\epsilon}$. 

In general, we can consider disconnected rational holomorphic curves with $k$ marked points passing through $k$ point constraints, which give rise to an operator $\widehat{p}_{k\bullet}$ on $EV$.  $\widehat{p}_{k\bullet}$ is homotopic to $\widehat{p}_{\bullet}^k$.

For a pointed morphism, we can define a slightly different order using the full linearized $BL_\infty$ structure instead of just the $L_\infty$ structure as follows. 
\begin{definition}
	Given a $BL_{\infty}$ augmentation and a pointed map $p_{\bullet}$, we define
	$$\tilde{O}(V,\epsilon,p_{\bullet}):=\min\left\{k\left|1\in \Ima \pi_{\Q}\circ \widehat{p}_{\bullet,\epsilon}|_{H_*(\overline{B}^k \overline{S}V,\widehat{p}_{\epsilon})}\right.\right\},$$
 where $\pi_{\Q}:EV\to \Q \subset SV$ is the projection. 
\end{definition}
Here, as $p^{k,0}_{\epsilon}=0$, $(\overline{B}^k\overline{S}V,\widehat{p}_{\epsilon})$ is a subcomplex of $(EV,\widehat{p}_{\epsilon})$. On the other hand, $\widehat{p}_{\bullet,\epsilon}$ is a chain map from $(EV,\widehat{p}_{\epsilon})$ to itself, but it will not preserve the subcomplex  $(\overline{B}^k\overline{S}V,\widehat{p}_{\epsilon})$. Finally, again by $p^{k,0}_{\epsilon}=0$, $\pi_{\Q}:EV\to \Q$ is a chain map. As a consequence, $\pi_{\Q}\circ \widehat{p}_{\bullet,\epsilon}:H_*(\overline{B}^k\overline{S}V,\widehat{p}_{\epsilon})\to \Q$ is well-defined. In the proof of the following proposition, we will see that $O(V,\epsilon,p_{\bullet})$ and $\tilde{O}(V,\epsilon,p_{\bullet})$ look for similar curves (in the context of SFT), but $\tilde{O}(V,\epsilon,p_{\bullet})$ has a stronger closedness requirement involving $p^{k,l}_{\epsilon}$ for $l\ge 2$.  

\begin{proposition}\label{prop:inequ}
	$O(V,\epsilon,p_{\bullet}) \le \tilde{O}(V,\epsilon,p_{\bullet})$.
\end{proposition}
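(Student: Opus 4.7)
The plan is to show that $\tilde O(V,\epsilon,p_{\bullet})\le k$ implies $O(V,\epsilon,p_{\bullet})\le k$. Fix a $\widehat p_\epsilon$-cycle $y\in\overline B^k\overline SV$ with $\pi_\Q\,\widehat p_{\bullet,\epsilon}(y)=1$; the task is to extract from $y$ an element $x\in\overline S^kV$ with $\widehat\ell_\epsilon(x)=0$ and $\widehat\ell_{\bullet,\epsilon}(x)=1$.

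Let $J\colon\overline SV\hookrightarrow E\overline SV$ denote the ``single-letter word'' inclusion $\overline SV\cong\bigoplus_{i\ge 1}\odot^iV$ from the paper's description of $\widehat\ell_\epsilon$, and let $P\colon E\overline SV\twoheadrightarrow\overline SV$ be the corresponding projection, so that $\widehat\ell_\epsilon=P\circ\widehat p_\epsilon\circ J$. Note that $P$ sends $S^j\overline SV$ onto $S^jV$, hence carries the filtration $\overline B^k\overline SV$ onto $\overline S^kV$. I will take $x:=P(y)\in\overline S^kV$ and write $y=J(x)+z$, where $z$ is supported on sentences containing at least one word of length $\ge 2$. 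The proposition then reduces to the two identities
\begin{equation*}
    \pi_\Q\,\widehat p_{\bullet,\epsilon}(y)=\widehat\ell_{\bullet,\epsilon}(x),\qquad P\,\widehat p_\epsilon(y)=\widehat\ell_\epsilon(x).
\end{equation*}

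For the first identity, a component of $\widehat p^k_{\bullet,\epsilon}$ applied to a sentence $w_1\odot\cdots\odot w_n$ outputs an element of $\Q\subset EV$ only when the entire sentence is combined ($k=n$) and the combined word has length zero, i.e.\ $l-k+\sum_i|w_i|=0$. Since every $|w_i|\ge 1$, this forces each $|w_i|=1$ and $l=0$, so only $J(x)$ contributes and the contribution is precisely $\widehat\ell_{\bullet,\epsilon}(x)$. For the second identity, equivalently $P\,\widehat p_\epsilon(z)=0$, consider a sentence in $z$ with a distinguished multi-letter word $w_{j_0}$: if $j_0\notin I$ then $w_{j_0}$ persists in the output and the output sentence is not all-single-letter; if $j_0\in I$, the combined output word has length $l-k+\sum_{j\in I}|w_j|\ge l+1$, so to be single-letter we need $l=0$, at which point the contribution vanishes thanks to $p^{k,0}_\epsilon=0$.

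Granting the two identities, the cycle condition $\widehat p_\epsilon(y)=0$ yields $\widehat\ell_\epsilon(x)=0$ and the evaluation $\pi_\Q\,\widehat p_{\bullet,\epsilon}(y)=1$ yields $\widehat\ell_{\bullet,\epsilon}(x)=1$, giving $O(V,\epsilon,p_{\bullet})\le k$. The main technical point is the invisibility of $z$ to $P\widehat p_\epsilon$, which rests crucially on the vanishing $p^{k,0}_\epsilon=0$ of the linearized $BL_\infty$ structure: without this input, a multi-letter word could in principle collapse to the empty word and reintroduce contributions to $\overline SV$ from outside the ``diagonal'' $J(\overline SV)$.
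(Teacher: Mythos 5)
Your proof is correct and is essentially the paper's argument in different clothing: your decomposition $y=J(x)+z$ with projection $P$ is exactly the paper's splitting of $\overline{B}^k\overline{S}V$ into the all-single-letter part and the subcomplex $\overline{B}^k_2$ of sentences containing a word of length $\ge 2$, and your two identities (invisibility of $z$ to $\pi_{\Q}\circ\widehat{p}_{\bullet,\epsilon}$ and to $P\circ\widehat{p}_{\epsilon}$, both hinging on $p^{k,0}_{\epsilon}=0$) are precisely the paper's statements that $\overline{B}^k_2$ is a subcomplex killed by $\pi_{\Q}\circ\widehat{p}_{\bullet,\epsilon}$ with quotient $(\overline{B}^kV,\widehat{\ell}_{\epsilon})$.
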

\begin{proof}
We use $\overline{B}^k_2\subset \overline{B}^k\overline{S}V$ to denote the subspace 
$$\bigoplus_{j=0}^{k-1}\left(\left(\bigoplus_{i=2}^\infty S^i V\right) \odot \left(\left(\overline{S}V\right)^{\odot j}\right)\right).$$ 
Since $p^{k,0}_{\epsilon}=0$, we have $\overline{B}^k_2$ is a subcomplex. Moreover, the quotient complex $\overline{B}^k\overline{S}V/\overline{B}^k_2$ is exactly $(\overline{B}^kV,\widehat{\ell}_{\epsilon})$. Note that $\pi_{\Q}\circ\widehat{p}_{\bullet,\epsilon}$ on $\overline{B}^k_2$ is zero, as $\widehat{p}_{\bullet,\epsilon}|_{\overline{B}^k_2}$ contains at least one $V$ component in the output. Moreover, $\pi_{\Q}\circ \widehat{p}_{\bullet,\epsilon}$ on $\overline{B}^k\overline{S}V/\overline{B}^k_2V$ is identified with $\widehat{\ell}_{\bullet,\epsilon}$ on $\overline{B}^kV$. Therefore $1\in \pi_{\Q}\circ \widehat{p}_{\bullet,\epsilon}|_{H^*(\overline{B}^k \overline{S}V,\widehat{p}_{\epsilon})}$ implies that $1\in \Ima \widehat{\ell}_{\bullet,\epsilon}|_{H_*(\overline{B}^k V, \widehat{\ell}_{\epsilon})}$. Hence the claim follows.
\end{proof}

Similarly, we can define
$$\tilde{O}(V,\epsilon,p_{k\bullet}):=\min\left\{k\left|1\in \Ima \pi_{\Q}\circ \widehat{p}_{k\bullet,\epsilon}|_{H_*(\overline{B}^k \overline{S}V,\widehat{p}_{\epsilon})}\right.\right\}.$$
%By moving the two point-constraints towards infinity in opposite directions similar to the proof of \cite[Proposition 5.14]{MZ22}, we obtain the relation $\widehat{p}_{\bullet}^2=\widehat{p}_{2\bullet}$ up to homotopy on $EV$. Similarly, we have $\widehat{p}^2_{\bullet,\epsilon}$ and $\widehat{p}_{2\bullet,\epsilon}$ are homotopic.  Therefore it is direct to see that  $\tilde{O}(V,\epsilon,p_{\bullet\bullet})\ge \tilde{O}(V,\epsilon,p_{\bullet})$. It is not clear whether the inequality can be strict. In general, we can consider disconnected rational holomorphic curves with $k$ marked points passing through $k$ point constraints, which give rise to an operator $\widehat{p}_{k\bullet}$ on $EV$.  $\widehat{p}_{k\bullet}$ is homotopic to $\widehat{p}_{\bullet}^k$ and we can define an order $\tilde{O}(V,\epsilon,p_{k\bullet})$ for a $BL_{\infty}$ augmentation $\epsilon$. This is not quite right,  as O is defined from the reduced to unreduced complex, but the relation is on unreduced complex.
To give the analog of $O(V_\alpha,\epsilon,p_{\bullet})$ when we have multiple point constraints, we consider the projection $\pi_{m}:\overline{B}^k \overline{S}V_\alpha \to \overline{B}^k \overline{B}^m V_\alpha$. Then we have a filtration $\overline{B}^k \overline{S}V_\alpha:=\ker \pi_0\supset \ker \pi_1 \supset \ldots$. Then we define the width of an element in $\overline{B}^k \overline{S}V_\alpha$ by
	$$w(v)=\sup \left\{m|v\in \ker \pi_m\right\}.$$
	We claim that $w(\widehat{p}_{\epsilon}(v))\ge w(v)$. To see this, we will first prove $\pi_m\circ \widehat{p}_{\epsilon}=\pi_m\circ\widehat{p}_{\epsilon}\circ \pi_m$ as follows. For $x\in \overline{B}^k \overline{S}V_\alpha$, we can write $x=x_0+x_1$ for $x_0=\pi_m(x)\in \overline{B}^k \overline{B}^mV_\alpha,x_1\in \ker \pi_m$. Since $p^{k,0}_{\epsilon}=0$, we have $\pi_m\circ \widehat{p}_{\epsilon}(x_1)=0$. As a consequence, we have $\pi_m\circ \widehat{p}_{\epsilon}(x)=\pi_m\circ \widehat{p}_{\epsilon}\circ \pi_m(x)$. Therefore if $\pi_m(v)=0$, we have $\pi_m\circ \widehat{p}_{\epsilon}(v)=0$. In particular,  $w(\widehat{p}_{\epsilon}(v))\ge w(v)$. Therefore $\pi_m\circ \widehat{p}_{\epsilon}|_{\overline{B}^k\overline{B}^m V_\alpha}$ squares to zero, hence is a differential on $\overline{B}^k\overline{B}^m V_\alpha$. $\pi_m\circ \widehat{p}_{\epsilon}$ uses the knowledge of $p^{k,l}_{\epsilon}$ for $l \le m$ and $\pi_1\circ \widehat{p}_{\epsilon}=\widehat{\ell}_{\epsilon}$. 
	
	On the other hand, given the operator $\widehat{p}_{m\bullet}$ defined using $m$ point constraints, we have the deformed chain map $\widehat{p}_{m\bullet,\epsilon}=\widehat{F}_{\epsilon}\circ \widehat{p}_{m\bullet}\circ \widehat{F}_{-\epsilon}$ from $(EV, \widehat{p}_{\epsilon})$ to itself, where $\widehat{F}_{\pm \epsilon}$ is the change of coordinates used after \Cref{prop:linearized_BL}. Now we consider $\widehat{p}_{m\bullet,\epsilon}$ on the subcomplex $(\overline{E}V:=\overline{S}\overline{S}V,\widehat{p}_{\epsilon})$. Since $\widehat{p}_{m\bullet,\epsilon}$ can have at most $m$ nontrivial connected components, we have $\pi_{\Q}\circ \widehat{p}_{m\bullet,\epsilon}(v)=0$ if $w(v)\ge m+1$. Hence we have $\pi_{\Q}\circ \widehat{p}_{m\bullet,\epsilon}\circ \pi_m \circ \widehat{p}_{\epsilon}=\pi_{\Q}\circ \widehat{p}_{m
	\bullet,\epsilon}\circ \widehat{p}_{\epsilon}=0$ and  $\pi_{\Q}\circ \widehat{p}_{m\bullet,\epsilon}\circ \pi_m = \pi_{\Q}\circ \widehat{p}_{m\bullet,\epsilon}$, i.e.\ we have the following commutative diagram of chain complexes:
	\begin{equation}\label{diagram:order}
	\xymatrix{
	(\overline{B}^k \overline{B}^m V_\alpha,\pi_m\circ\widehat{p}_{\epsilon}) \ar[rrr]^{\pi_{\Q}\circ \widehat{p}_{m\bullet,\epsilon}} & & &\Q \\
	(\overline{B}^k \overline{S}V_\alpha,\widehat{p}_{\epsilon}) \ar[rrr]^{\pi_{\Q}\circ \widehat{p}_{m\bullet,\epsilon}}\ar[u]^{\pi_m}& & &\Q\ar[u]^{=} }
    \end{equation}

	\begin{definition}
		We define 	$O(V,\epsilon,p_{m\bullet}):=\min \left\{ k\left|1\in \Ima \pi_{\Q}\circ \widehat{p}_{m\bullet,\epsilon}|_{H_*(\overline{B}^k \overline{B}^m V, \pi_m\circ \widehat{p}_{\epsilon})}\right. \right\}$.
	\end{definition}
	In particular, $O(V,\epsilon,p_{1\bullet})$ is Definition \ref{def:order}. Moreover, diagram \eqref{diagram:order} implies the following.
	\begin{proposition}
		$O(V,\epsilon,p_{m\bullet})\le \tilde{O}(V,\epsilon,p_{m\bullet})$.
	\end{proposition}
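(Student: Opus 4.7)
The plan is to read the inequality directly off the commutative diagram \eqref{diagram:order}, using the two facts already established just before the statement: (i) $\pi_m \circ \widehat{p}_\epsilon = \pi_m \circ \widehat{p}_\epsilon \circ \pi_m$, so $\pi_m$ is a chain map from $(\overline{B}^k\overline{S}V,\widehat{p}_\epsilon)$ to $(\overline{B}^k\overline{B}^m V, \pi_m\circ \widehat{p}_\epsilon)$, and (ii) $\pi_\Q \circ \widehat{p}_{m\bullet,\epsilon} = \pi_\Q \circ \widehat{p}_{m\bullet,\epsilon}\circ \pi_m$, because any element of width $>m$ would require more than $m$ connected components to land in $\Q$ under $\widehat{p}_{m\bullet,\epsilon}$.

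Concretely, set $k=\tilde O(V,\epsilon,p_{m\bullet})$. By definition of $\tilde O$ there exists a cycle $v\in \overline{B}^k\overline{S}V$ with $\widehat{p}_\epsilon(v)=0$ and $\pi_\Q\circ \widehat{p}_{m\bullet,\epsilon}(v)=1$. First I would show that $\pi_m(v)$ is a cycle in $(\overline{B}^k\overline{B}^mV, \pi_m\circ \widehat{p}_\epsilon)$: using (i),
\[
(\pi_m\circ \widehat{p}_\epsilon)(\pi_m(v)) \;=\; \pi_m\circ \widehat{p}_\epsilon(v) \;=\; 0.
\]
Next I would compute its image under the horizontal map in \eqref{diagram:order}: by (ii),
\[
\pi_\Q\circ \widehat{p}_{m\bullet,\epsilon}(\pi_m(v)) \;=\; \pi_\Q\circ \widehat{p}_{m\bullet,\epsilon}(v) \;=\; 1.
\]
Thus $[\pi_m(v)] \in H_*(\overline{B}^k\overline{B}^mV,\pi_m\circ \widehat{p}_\epsilon)$ is sent to $1$ under $\pi_\Q\circ \widehat{p}_{m\bullet,\epsilon}$, which shows $O(V,\epsilon,p_{m\bullet}) \le k$.

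There is essentially no obstacle here: the statement is a purely formal consequence of the commutative square (and the fact that $\pi_m$ is a chain map), exactly parallel to the argument of Proposition~\ref{prop:inequ}, which used the filtration by $\overline{B}^k_2$ instead of the width filtration by $\ker \pi_m$. The only point to double-check is that the minimum in $O(V,\epsilon,p_{m\bullet})$ really is taken over classes in $H_*(\overline{B}^k\overline{B}^mV,\pi_m\circ\widehat{p}_\epsilon)$ and not over a smaller subcomplex, so that the witness $\pi_m(v)$ is admissible; this is clear from the definition.
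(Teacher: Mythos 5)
Your proof is correct and is exactly the argument the paper intends: the paper simply states that the commutative diagram \eqref{diagram:order} implies the proposition, and your write-up spells out the two ingredients ($\pi_m$ being a chain map onto the quotient complex, and $\pi_\Q\circ\widehat{p}_{m\bullet,\epsilon}$ factoring through $\pi_m$) that make the diagram yield the inequality. No gaps.
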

Given a strict exact filling $(W,\lambda)$ of $(Y,\alpha)$, assume $\widetilde{O}(V_\alpha,\epsilon,p_{k\bullet})<\infty$ for the $BL_\infty$ augmentation $\epsilon$ coming from $W$. Then we can define the spectral invariant for $l\ge \widetilde{O}(V_\alpha,\epsilon,p_{k\bullet})$:
	$$\mathfrak{r}^{\le l}\langle \underbrace{p,\ldots,p}_k\rangle :=\inf\left\{a: T^a\in \Ima \pi_{\Lambda_0}\circ \widehat{p}_{k\bullet,\epsilon}|_{H_*(\overline{B}^l\overline{S}V^{\ge 0}_\alpha, \widehat{p}_{\epsilon})} \right\}<\infty,$$
where $V^{\ge 0}_\alpha$ is the free $\Lambda_0:=\{\sum a_i T^{b_i}| b_i\ge 0, \lim b_i =\infty,a_i\in \Q\}$ module generated by good Reeb orbits of $\alpha$. This is exactly the higher symplectic capacity $\mathfrak{r}^{\le l}\langle \underbrace{p,\ldots,p}_k\rangle$ defined by Siegel in \cite[\S 6]{siegel2019higher}. In \cite{siegel2019higher}, the author considered disconnected holomorphic curves in $W$ with $k$ point constraints and viewed them as a chain map from $\overline{S}\CHA(Y)=EV_\alpha$ to $\Lambda$, where $\CHA$ denotes the contact homology algebra. Then the capacity is defined to be the infimum of $a$ such that there is a closed class in $x\in \overline{B}^k SV^{\ge 0}_\alpha$ such that $x$ is mapped to $T^a$ by the chain map. The equivalence of these two definitions can be seen from the same argument as Proposition 5.14 in \cite{MZ22}. Similarly, combining the tangency conditions and multiple point constraints, we can define the analogous orders, whose spectral invariant is again equivalent to the higher capacity $\mathfrak{r}^{\le l}\langle\cT^{m_1}p,\ldots,\cT^{m_k}p\rangle$ in \cite[\S 6]{siegel2019higher}. Similarly, the spectral invariant for $O(V,\epsilon,p_{\bullet})$ is equivalent to $\mathfrak{g}^{\le l}_1$ and the spectral invariant for the analogous version of pointed morphism with tangency conditions of order $m$ is $\mathfrak{g}^{\le l}_{t^m}$ in \cite{siegel2019higher}.
	
	\begin{example}
		Let $W$ be an irrational ellipsoid with $\alpha$ the contact form on $\partial W$. Then we have $\tilde{O}(V_\alpha,\epsilon_W,p_{k\bullet})=1$ for all $k\ge 1$. To see this, we let $\gamma$ denote the shortest Reeb orbit; then for a generic point in $W$ and a generic admissible almost complex structure, there is one\footnote{More precisely, the algebraic count of such curves is $1$, see e.g.\ \cite[Proposition 2.11]{zhou2020mathbb}. Strictly speaking, \cite[Proposition 2.11]{zhou2020mathbb} regards solutions to the Floer equation. One can apply neck-stretching and note that the Reeb orbit has the minimal period, to see that the count for holomorphic curves is also $1$.  This is because the Floer plane will break into a holomorphic plane and a reparameterization of the trivial cylinder solving the Floer equation in \cite[Proposition 4.7]{zbMATH07926124}.} holomorphic plane in $\widehat{W}$ asymptotic to $\gamma$ and passing through the point. Since $\mu_{CZ}(\gamma')+n-3=0 \mod 2$ for any Reeb orbit $\gamma'$, we have $q_{\gamma}\in \overline{B}^1 \overline{S}V_\alpha$ is a closed class and $\pi_\Q \circ \widehat{p}_{\bullet,\epsilon_W}(q_\gamma)=1$ by the holomorphic curve above. In general, we have $ q^k_{\gamma}\in \overline{B}^1 \overline{S}V_\alpha$ is closed by degree reasons. On the other hand, $\pi_\Q \circ \widehat{p}_{k\bullet,\epsilon_W}|_{S^kV_\alpha\subset \overline{B}^1\overline{S}V_{\alpha}}$ must only use disconnected curves with $k$ components and each component has one positive puncture, for otherwise, genus has to be created. As a consequence, we have $\pi_\Q \circ \widehat{p}_{k\bullet,\epsilon_W}(q^k_\gamma)=1$ by the disconnected $k$ copies of the curve above. Then one can show that such a curve is independent of the augmentation, as the curve above is contained in the symplectization after a neck-stretching, see \cite{zhou2020mathbb,zhou2019symplecticI}.
		
		Although there is nothing interesting happening for the order $\tilde{O}(V_\alpha,\epsilon_W,p_{k\bullet})$, we know that the spectral invariant  $\mathfrak{r}^{\le l}\langle \underbrace{p,\ldots,p}_k\rangle$  is defined for all $k\ge 1$ and $l\ge 1=\tilde{O}(V_\alpha,\epsilon_W,p_{k\bullet})$. Those numerical invariants are very sensitive to the shape of $W$ and are powerful tools to study embedding problems, see \cite{siegel2019higher} for details.
	\end{example}
    
	\begin{remark}
		The spectral invariants for $O(V,\epsilon,p_{m\bullet})$  provide a new family of higher symplectic capacities. It is unclear whether they provide new obstructions to symplectic embeddings. 
	\end{remark}

    \begin{remark}\label{rmk:status}
        Structures in \S \ref{ss:511}-\ref{ss:513} can be defined to the same extent as those defined in \cite{MZ22} and recalled in \S \ref{s3}. Namely, the analogue of \Cref{thm:vfc} holds for those structures defined using VFC. In particular, those order invariants from tangency conditions and multiple constraints are well-defined and functorial on the cobordism category by the same proof of \cite[Proposition 3.23, 3.25]{MZ22}. 
    \end{remark}
\subsection{Higher genera}\label{ss:higher_genera}
Another natural direction to generalize is allowing the genus of holomorphic curves to vary, i.e.\ considering the full SFT. Originally, the full SFT was phrased as a differential Weyl algebra with a distinguished odd degree Hamiltonian $\bH$ such that $\bH \star \bH=0$ \cite{EGH}. There are two closely related ways to view the full SFT as a functor from $\cont$ in a more convenient way, namely the $BV_\infty$ formulation \cite{cieliebak2009role} and the $IBL_\infty$ formulation \cite{cieliebak2015homological}. In the following, we will first briefly recall their definitions. In view of the $BL_\infty$ formalism in this paper, we will give a slightly different but equivalent definition of $IBL_\infty$ algebras. Then we will make some speculations assuming the analytical foundation of the full SFT is completed.

\subsubsection{$IBL_\infty$ algebras and $BV_\infty$ algebras} 
Let $V$ be a $\Z/2$ graded vector space over $\bk$ and $\phi:S^kV \to S^l V$. We can define $\widehat{\phi}:\overline{S}V \to \overline{S}V$ by $\widehat{\phi}=0$ on $S^mV$ with $m<k$ and 
$$\widehat{\phi}(v_1 \ldots  v_m)=\sum_{\sigma \in Sh(k,m-k)} \frac{(-1)^{\diamond}}{k!(m-k)!} \phi(v_{\sigma(1)} \ldots  v_{\sigma(k)}) v_{\sigma(k+1)} \ldots  v_{\sigma(m)},$$
for $m\geq k$, where $(-1)^{\diamond}v_{\sigma(1)}\ldots v_{\sigma(m)}=v_1\ldots v_m$ in $S^kV$.
\begin{definition}[{\cite[Definition 2.3]{cieliebak2015homological}}]\label{def:IBL}
	Let $V$ be a $\Z/2$ graded vector space over $\bk$. An $IBL_\infty$ structure on $V$ is a family of operators $p^{k,l,g}:S^kV \to S^l V$ for $k,l\ge 1$ and $g\ge 0$, such that 
	$$\widehat{p}:=\sum_{k,l=1}^\infty\sum_{g=0}^\infty \widehat{p}^{k,l,g}\hbar^{k+g-1}\tau^{k+l+2g-2}:      \overline{S}V[[\hbar,\tau]]\to \overline{S}V[[\hbar,\tau]] $$
	satisfies that $\widehat{p}\circ \widehat{p}=0$ and $|\widehat{p}|=-1$. Here $|\hbar|=0$ and $|\tau|=0$.
\end{definition}
 The original definition of $IBL_\infty$ algebra on a $\Z/2$ graded vector space involves taking a suspension as the formalism for $L_\infty$ algebras in \cite{MZ22}. To align with the notation of $BL_\infty$ algebras, we will not take the suspension. In particular, if $V$ is an $IBL_\infty$ algebra in Definition \ref{def:IBL}, then $V[-1]$ is an $IBL_\infty$ algebra in the sense of \cite{cieliebak2015homological}.

In the case of $V$ being $\Z$ graded, then one can define $IBL_\infty$ structures of degree $d$ by requiring $|\hbar|=2d$,  $|\widehat{p}|=-1$ and $|\tau|=0$. In the case of SFT, $d=n-3$ for a $2n-1$ dimensional contact manifold $(Y,\xi)$ with $c_1(\xi)=0$. If moreover we know that for any $v_1,\ldots,v_k\in V$ and $g\ge 0$ there exist at most finitely many $l\ge 1$ such that $p^{k,l,g}(v_1 \ldots  v_k)\ne 0$, then $\widehat{p}$ is well-defined on $\overline{S}V[[h]]$ by setting $\tau=1$. The formal variable $\tau$ is used to keep track of the (negative) Euler characteristic of the punctured surface \cite[(2) after Remark 2.4]{cieliebak2015homological}. To compare with the $BL_\infty$ algebra in \S \ref{ss:BL}, which does not have the $\tau$-variable, we will always specialize at $\tau=1$ in the following. Note that we only consider $p^{k,l,g}$ with $l\ge 1$ in Definition \ref{def:IBL}. In the context of SFT, such a requirement is equivalent to counts of rigid holomorphic curves in the symplectization with no negative punctures always being zero.  This, in general, does not hold true. Indeed, Definition \ref{def:IBL} is used to model the linearized theory, i.e.\ SFT associated to an exact domain. In general, we will need the following (cocurved) $IBL_\infty$ algebras (specialized at $\tau=1$).

\begin{definition}\label{def:curved_IBL}
	Let $V$ be a $\Z/2$ graded vector space over $\bk$. A (cocurved) $IBL_\infty$ structure on $V$ is a family of operators $p^{k,l,g}:S^kV\to S^lV$ for $k\ge 1,l,g\ge 0$, such that
	\begin{enumerate}
		\item\label{IBL_1} for $v_1,\ldots,v_k\in V, g\ge 0$, there are finitely many $l$ such that $p^{k,l,g}(v_1\ldots v_k)\ne 0$,
		\item $\widehat{p}:=\sum_{k=1}^\infty\sum_{l,g=0}^\infty\widehat{p}^{k,l,g}\hbar^{k+g-1}:SV[[\hbar]]\to SV[[\hbar]]$ satisfies $\widehat{p}\circ \widehat{p}=0$ and $|\widehat{p}|=-1$, where $\widehat{p}(1)$ is defined to be $0$.
	\end{enumerate}
\end{definition}
Note that Definition \ref{def:curved_IBL} gives rise to a $BV_\infty$ algebra introduced in \cite{cieliebak2009role}, which was used to define algebraic torsions in \cite{LW}. For the translations between $BV_\infty$ algebras and differential Weyl algebras with distinguished Hamiltonians, see \cite{cieliebak2009role} for details. In the following, we give an alternative description of Definition \ref{def:curved_IBL}. Assume we are given a family of operators $p^{k,l,g}:S^kV\to S^lV$ for $k\ge 1,l,g\ge 0$, such that \eqref{IBL_1} of Definition \ref{def:curved_IBL} is satisfied. We use $EV[[h]]$ to denote $\overline{S}(SV[[h]])$, where $\overline{S}$ is the symmetric product over $\bk[[\hbar]]$. Then we define $\widehat{p}:EV[[h]]\to EV[[h]]$ by the following graph description. We use a tree with $k+1$ vertices with a label $g\ge 0$ to represent a class $\hbar^gv_1 \ldots  v_k\in SV[[\hbar]]$. Then we use the graph with  $\tikz\draw[black,fill=white] (0,-1) circle (0.4em);$ to represent operators $p^{k,l,g}$, but we label the  $\tikz\draw[black,fill=white] (0,-1) circle (0.4em);$ with a number $g\ge 0$. To define $\widehat{p}$, we represent a class in $EV[[\hbar]]$ by a row of trees with labels, then we glue in one graph representing $p^{k,l,g}$ and dashed vertical lines representing the identity map. Here, we allow cycles to be created. The rule for the output is the same as before with the degree of $\hbar$ determined by the sum of the labels $g$ in the connected component of the glued graph with the number of (independent) cycles in that component. Then $\widehat{p}$ is the sum of all such glued graphs. 
\begin{figure}[H]\label{fig:IBL}
	\begin{center}
		\begin{tikzpicture}
		\node at (0,0) [circle,fill,inner sep=1.5pt] {};
		\node at (1,0) [circle,fill,inner sep=1.5pt] {};
		\node at (2,0) [circle,fill,inner sep=1.5pt] {};
		\node at (3,0) [circle,fill,inner sep=1.5pt] {};
		\node at (4,0) [circle,fill,inner sep=1.5pt] {};
		\node at (5,0) [circle,fill,inner sep=1.5pt] {};
		\node at (6,0) [circle,fill,inner sep=1.5pt] {};
		\node at (7,0) [circle,fill,inner sep=1.5pt] {};

		\draw (0,0) to (1,1) to (1,0);
		\draw (1,1) to (2,0);
		\draw (3,0) to (4,1) to (4,0);
		\draw (4,1) to (5,0);
		\draw (6,0) to (6.5,1) to (7,0);
		
		\node at (1,1.2) {$g_1$};
		\node at (4,1.2) {$g_2$};
		\node at (6.5,1.2) {$g_3$};

		\draw (2,0) to (2.5,-1) to (3,0);
		\draw (2,-2) to (2.5,-1) to (2.5,-2);
		\draw (2.5,-1) to (3,-2);
		\draw (1,0) to (2.5,-1);
		\node at (2.5,-1) [circle, fill=white, draw, outer sep=0pt, inner sep=3 pt] {};
		\node at (2.8,-1) {$g$};

		\node at (0,-2) [circle,fill,inner sep=1.5pt] {};
		\node at (2,-2) [circle,fill,inner sep=1.5pt] {};
		\node at (3,-2) [circle,fill,inner sep=1.5pt] {};
		\node at (4,-2) [circle,fill,inner sep=1.5pt] {};
		\node at (5,-2) [circle,fill,inner sep=1.5pt] {};
		\node at (6,-2) [circle,fill,inner sep=1.5pt] {};
		\node at (7,-2) [circle,fill,inner sep=1.5pt] {};
		\node at (2.5,-2) [circle,fill,inner sep=1.5pt] {};
		
		\draw[dashed] (0,0) to (0,-2);
		\draw[dashed] (4,0) to (4,-2);
		\draw[dashed] (5,0) to (5,-2);
		\draw[dashed] (6,0) to (6,-2);
		\draw[dashed] (7,0) to (7,-2);
		\end{tikzpicture}
	\end{center}
	\caption{A component of $\widehat{p}$ from $\hbar^{g_1}S^3V \odot \hbar^{g_2}S^3 V \odot \hbar^{g_3}S^2 V$ to $\hbar^{g_1+g_2+g+1}S^6V \odot \hbar^{g_3}S^2 V$ using $p^{3,3,g}$.}
\end{figure}
One can think of the tree representing a class in $SV[[\hbar]]$ as a genus $g$ surface with $k$ negative punctures and $p^{k,l,g}$ represented by a genus $g$ surface with $k$ positive punctures and $l$ negative punctures. Then a glued graph represents a possibly disconnected surface with only negative punctures, which represents a class in $EV[[\hbar]]$ with the degree of $\hbar$ being the sum of the genus of all connected components.
\begin{definition}\label{def:curved_IBL'}
	 $(V,p^{k,l,g})$ is a (cocurved) $IBL_\infty$ algebra iff $\widehat{p}:EV[[h]]\to EV[[h]]$ defined above satisfies $\widehat{p}\circ \widehat{p}=0$ and $|\widehat{p}|=1$.
\end{definition}

\begin{proposition}\label{prop:IBL_equi}
	 \Cref{def:curved_IBL,def:curved_IBL'} are equivalent.
\end{proposition}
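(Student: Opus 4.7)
The approach is the standard coderivation principle. The operator $\widehat{p}$ of Definition~\ref{def:curved_IBL'} manifestly acts on $EV[[\hbar]] = \overline{S}(SV[[\hbar]])$ by picking a subset of trees from the input forest, gluing exactly one operator vertex $p^{k,l,g}$ whose $k$ input legs attach to leaves of only the chosen trees (possibly creating cycles within the chosen subset), and leaving the remaining trees unchanged. This is precisely the defining property of a coderivation with respect to the symmetric coproduct dual to $\odot$. Hence $\widehat{p}$ is determined by its projection $\pi\colon EV[[\hbar]]\to SV[[\hbar]]$ onto the sentence-length-one summand, and since a composition of coderivations is a coderivation, $\widehat{p}^2$ is likewise determined by $\pi\circ\widehat{p}^2$.

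The first key step is to identify $\pi\circ\widehat{p}$ applied to single-tree inputs $s = \hbar^{g_1}v_1\cdots v_{k_1}\in SV[[\hbar]]\subset EV[[\hbar]]$ with the operator $\widehat{p}$ of Definition~\ref{def:curved_IBL}. Only connected-output gluings survive the projection, and for these all $k$ input legs of $p^{k,l,g}$ land on the single input tree; summing over the choice of leaves to which they attach exactly reproduces the Leibniz-rule extension $\widehat{p}^{k,l,g}(s)$ of Definition~\ref{def:curved_IBL}. The $\hbar$-powers match by an Euler-characteristic count: two components (the input tree and the operator vertex) joined by $k$ new edges that produce a connected graph necessarily create $k-1$ independent cycles, so the weight prescribed by Definition~\ref{def:curved_IBL'}, namely $\hbar^{(\text{labels})+(\text{cycles})} = \hbar^{g_1 + g + (k-1)}$, coincides with the weight $\hbar^{g_1}\cdot\hbar^{k+g-1}$ attached in Definition~\ref{def:curved_IBL}. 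Iterating this identification on both applications of $\widehat{p}$, one sees that $\pi\circ\widehat{p}^2$ on $SV[[\hbar]]$ equals the square of the operator of Definition~\ref{def:curved_IBL}, and combining with the coderivation reduction of the first paragraph shows that the two nilpotency conditions are equivalent.

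The main obstacle is verifying carefully that the symmetry and combinatorial factors from the $\odot$-product on $EV$ agree with the multinomial coefficients arising from the Leibniz rule on $SV$, and that the ``cycles are created only within the chosen subset of trees'' interpretation of the graph rules genuinely gives the coderivation property on $\overline{S}(SV[[\hbar]])$. Both are essentially formal enumerations already carried out for $BL_\infty$ algebras in \S\ref{ss:BL}--\ref{ss:tree}, and the present setting differs only by the additional cycle bookkeeping and the $\hbar$-weighting; once these are checked, the equivalence reduces to the standard coderivation lemma.
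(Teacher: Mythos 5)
Your proof correctly establishes the easy direction and the key identification: the operator of \Cref{def:curved_IBL'} restricted to a single input tree lands in $SV[[\hbar]]$ and coincides with the operator of \Cref{def:curved_IBL}, with the $\hbar$-weights matching by exactly the Euler-characteristic count you give ($k$ gluing edges joining two components create $k-1$ cycles). This is also how the paper proves that \Cref{def:curved_IBL'} implies \Cref{def:curved_IBL}. But the converse direction has a genuine gap. The coderivation reduction you invoke says that $\widehat{p}^2=0$ on $EV[[\hbar]]$ iff \emph{all} corestriction components $\pi_1\circ\widehat{p}^2\colon S^jSV[[\hbar]]\to SV[[\hbar]]$ vanish for every $j\ge 1$, whereas matching $\pi\circ\widehat{p}^2$ on single-tree inputs with the square of the operator of \Cref{def:curved_IBL} only controls the $j=1$ component. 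The components with $j\ge 2$ — configurations in which the two operator vertices connect several input trees into one output tree — are additional conditions, and they involve the connected two-level compositions weighted by \emph{different} $\hbar$-powers than in the $j=1$ case (gluing $a$ legs distributed over $j$ trees creates $a-j$ extra cycles rather than $a-1$), so they are not formally the same relations.

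What is missing is the step that bridges this: one must extract from the single-tree identity the vanishing of each \emph{connected} two-level composition separately. Writing $p_2^{a,b,k}$ for the sum of all two-level breakings of a connected graph with $a$ inputs, $b$ outputs and genus $k$, the $\hbar^N S^mV$-part of $\widehat p^2(v_1\cdots v_n)=0$ in \Cref{def:curved_IBL} only yields the triangular relation $\sum_{a=1}^{n} p_2^{a,m-n+a,N-a+1}\ast^{n-a}\Id=0$ (the terms with the two operator vertices in different components cancel in pairs since $|\widehat p|=1$). One then needs an induction on $n$ — starting from $n=1$, which gives $p_2^{1,m,N}=0$, and stripping off identity factors — to conclude $p_2^{n,m,N}=0$ for all signatures; only after that do all corestriction components of $\widehat p^2$ on $EV[[\hbar]]$ vanish. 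This inductive extraction is the substantive content of the paper's proof and cannot be replaced by the coderivation lemma alone.
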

\begin{proof}
     Throughout this proof, we use $\widehat{p}_S$ to denote $\widehat{p}:SV[[\hbar]]\to SV[[\hbar]]$ in \Cref{def:curved_IBL} and $\widehat{p}_E$ to denote $\widehat{p}:EV[[\hbar]]\to EV[[\hbar]]$ in \Cref{def:curved_IBL'}. Note that $\widehat{p}_E:SV[[\hbar]]\subset EV[[\hbar]] \to SV[[\hbar]]$ as we cannot increase the number of connected components in the graph description of $\widehat{p}_E$. Moreover, $\widehat{p}_E|_{SV[[\hbar]]}$ is exactly the $\widehat{p}_S$ in \Cref{def:curved_IBL} as the $\hbar^{k-1}$ is exactly the number of cycles in the glued graph. Therefore \Cref{def:curved_IBL'} implies \Cref{def:curved_IBL}. 
	
	On the other hand, we assume $\widehat{p}_S:SV[[\hbar]]\to SV[[\hbar]]$ squares to zero. We consider the description of the $\hbar^{N}S^mV$ part of $\widehat{p}_S\circ \widehat{p}_S(v_1 \ldots  v_n)$ for $v_i\in V$, which is zero by assumption. Since $\widehat{p}_S$ on $SV[[\hbar]]$ has the graph description on one tree as explained above, we need to consider possibly disconnected graphs with $n$ input vertices and $m$ output vertices and two  $\tikz\draw[black,fill=white] (0,-1) circle (0.4em);$ vertices such that if we glue all input vertices together by adding a new vertex, the resulting graph has $N$ cycles. Then there are two cases: (1) the two  $\tikz\draw[black,fill=white] (0,-1) circle (0.4em);$ vertices are in different components, then all such graphs will pair up and cancel each other by $|\widehat{p}_S|=1$; (2) the two $\tikz\draw[black,fill=white] (0,-1) circle (0.4em);$ vertices are in the same component with the remaining components being compositions of dashed lines. Assume the component with  $\tikz\draw[black,fill=white] (0,-1) circle (0.4em);$ has $a$ inputs and $b$ outputs and has genus $k$. We will call $(a,b,k)$ the signature of the glued graph. Then we must have $n-a=m-b$ is the number of components from dashed lines and $a+k-1=N$. We use $p_2^{a,b,k}$ to denote the map from $S^aV$ to $S^bV$ defined by two  $\tikz\draw[black,fill=white] (0,-1) circle (0.4em);$ components such that the glued graph with signature $(a,b,k)$. Or equivalently, $p_2^{a,b,k}$ is defined by all possible two-level breakings of a graph with signature $(a,b,k)$. We formally define $p^{a,b,k}=0$ if $b<0$, then the vanishing of the $\hbar^{N}S^mV$ part of $\widehat{p}_S\circ \widehat{p}_S(v_1 \ldots  v_n)$ implies  
	$$\sum_{a=1}^{n} p_2^{a,m-n+a,N-a+1} \ast ^{n-a}\Id=0,$$
	for all $n\ge 1,m,N\ge 0$. By setting $n=1$, we have $p_2^{1,m,N}=0$ for $m,N\ge 0$. Then by setting $n=2$ and $p_2^{1,m,N}=0$, we have $p_2^{2,m,N}=0$. Similarly we have $p_2^{n,m,N}=0$ for all $n\ge 1, m,N\ge 0$. This exactly describes that all maps from two-level breakings of a connected graph with signature $(n,m,N)$ should sum up to zero.  It implies that $\widehat{p}_E^2=0$ on $EV[[\hbar]]$ by the same argument in \cite[Proposition 2.9]{MZ22}.
\end{proof}

\begin{remark}
	From the proof of \Cref{prop:IBL_equi}, both  \Cref{def:curved_IBL,def:curved_IBL'} are equivalent to $p_2^{n,m,N}=0$. In the SFT world, this relation corresponds to the algebraic count of all rigid codimension $1$ breaking of connected holomorphic curves, with $n$ positive punctures, $m$ negative punctures, and genus $N$, is zero. 
\end{remark}
In the following, using \Cref{prop:IBL_equi}, we will switch our perspective of $IBL_\infty$ algebras to \Cref{def:curved_IBL'}, which has a more direct graph description and where orders of $\hbar$ have a more straightforward interpretation. The algebraic package, including morphisms etc., using \Cref{def:IBL}, can be found in \cite[\S 2]{cieliebak2015homological}.

Note that $SV\subset SV[[h]]$ induces an inclusion $EV\subset EV[[h]]$, and we use $\pi_0$ to denote the natural projection $EV[[\hbar]]\to EV$. It is easy to check that if $x\in \ker \pi_0$, then $\widehat{p}(x)\in \ker \pi_0$. As a consequence, $\pi_0\circ \widehat{p}|_{EV}:EV\to EV$ squares to zero. Moreover, $\pi_0\circ \widehat{p}|_{EV}$ is assembled from $p^{k,l,0}$. Then we have the following instant corollary.
\begin{corollary}
	Let $(V,p^{k,l,g})$ be a (cocurved) $IBL_\infty$ algebra, then $(V,p^{k,l}:=p^{k,l,0})$ is a $BL_\infty$ algebra.
\end{corollary}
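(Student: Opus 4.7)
The plan is to realize the candidate $BL_\infty$ differential $\widehat{p^0}$ for $\{p^{k,l} := p^{k,l,0}\}$ as the composition $\pi_0 \circ \widehat{p}|_{EV}$, where $\widehat{p}$ is the ambient (cocurved) $IBL_\infty$ differential on $EV[[\hbar]]$, and then deduce $\widehat{p^0}\circ \widehat{p^0}=0$ from $\widehat{p}\circ \widehat{p}=0$. This is essentially the observation sketched immediately before the statement of the corollary, and the whole argument amounts to carefully tracking the $\hbar$-power filtration on $EV[[\hbar]]$.

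The first step is to verify that $\widehat{p}$ is non-decreasing with respect to the $\hbar$-power filtration, equivalently $\widehat{p}(\ker \pi_0) \subseteq \ker \pi_0$. Reading off the graph description of $\widehat{p}$, a single gluing of a $p^{k,l,g}$ block against an input forest produces an output whose $\hbar$-label on each output tree component equals the sum of input $g$-labels lying in that component, plus the $g$-label on the glued $p^{k,l,g}$ vertex, plus the number of independent cycles in that component. Since the latter two contributions are non-negative, the total $\hbar$-power cannot drop. Writing $x = \pi_0(x) + (x - \pi_0(x))$ and using that $\widehat{p}$ kills the positive-$\hbar$-power part modulo $\ker\pi_0$ gives
$$\pi_0 \circ \widehat{p} \;=\; \pi_0 \circ \widehat{p} \circ \pi_0 \quad \text{on } EV[[\hbar]].$$

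The second step is to identify $\pi_0 \circ \widehat{p}|_{EV}$ with the $BL_\infty$ assembly $\widehat{p^0}$ of $\{p^{k,l,0}\}$ in the sense of Definition~\ref{def:BL}. On an input from $EV$ all input tree labels are zero, so the $\hbar^0$-part of the output is supported precisely on those glued graphs in which the inserted $p^{k,l,g}$ satisfies $g=0$ and no cycle is created, i.e., every connected component of the glued graph is a tree. These are exactly the forest configurations enumerated in the forest-gluing definition of $\widehat{p^0}$, and the sign and multiplicity conventions from \S\ref{ss:tree} carry over unchanged; this is what the earlier sentence ``$\pi_0\circ\widehat{p}|_{EV}$ is assembled from $p^{k,l,0}$'' means.

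Combining the two steps with $\widehat{p}\circ \widehat{p}=0$ gives
$$\widehat{p^0}\circ \widehat{p^0} \;=\; (\pi_0\circ \widehat{p})\circ(\pi_0 \circ \widehat{p})\bigr|_{EV} \;=\; \pi_0 \circ \widehat{p}\circ \widehat{p}\bigr|_{EV} \;=\; 0,$$
where the middle equality uses $\pi_0\circ\widehat{p} = \pi_0\circ\widehat{p}\circ\pi_0$ from the first step. The grading condition $|\widehat{p^0}| = 1$ is inherited from $|\widehat{p}|=1$ with $|\hbar|=0$, and the local finiteness hypothesis required of a $BL_\infty$ algebra---that for fixed $v_1,\ldots,v_k\in V$ at most finitely many $l$ yield $p^{k,l,0}(v_1\odot\ldots\odot v_k)\ne 0$---is exactly condition~(\ref{IBL_1}) of Definition~\ref{def:curved_IBL} specialized to $g=0$. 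There is no genuine obstacle: the proof is pure bookkeeping on the $\hbar$-power filtration together with the graph description of $\widehat{p}$.
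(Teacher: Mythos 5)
Your proposal is correct and follows exactly the paper's route: the paper proves the corollary by observing that $\widehat{p}$ preserves $\ker\pi_0$, so that $\pi_0\circ\widehat{p}|_{EV}$ squares to zero, and then identifying $\pi_0\circ\widehat{p}|_{EV}$ with the $BL_\infty$ assembly of $\{p^{k,l,0}\}$. You have simply filled in the filtration bookkeeping (cycles and genus labels only increase the $\hbar$-power) and the grading/finiteness checks that the paper leaves implicit.
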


\subsubsection{A grid of torsions} Given a family of operators $\phi^{k,l,g}:S^kV \to S^lV'$ for $k\ge 1,l,g\ge 0$, assume that for any $g\ge 0$ and $v_1,\ldots,v_k\in V$, there are at most finitely many $l$ such that $\phi^{k,l,g}(v_1 \ldots  v_k)\ne 0$. Then we can assemble  $\widehat{\phi}:EV[[\hbar]]\to EV'[[\hbar]]$ by the same rule for $BL_\infty$ morphisms except that cycles are allowed to be created and the rule for the order of $\hbar$ is the same as for $\widehat{p}$. 

\begin{figure}[H]
	\begin{center}
		\begin{tikzpicture}
		\node at (0,0) [circle,fill,inner sep=1.5pt] {};
		\node at (1,0) [circle,fill,inner sep=1.5pt] {};
		\node at (2,0) [circle,fill,inner sep=1.5pt] {};
		\node at (3,0) [circle,fill,inner sep=1.5pt] {};
		\node at (4,0) [circle,fill,inner sep=1.5pt] {};
		\node at (5,0) [circle,fill,inner sep=1.5pt] {};
		\node at (6,0) [circle,fill,inner sep=1.5pt] {};
		\node at (7,0) [circle,fill,inner sep=1.5pt] {};
		
		\draw (0,0) to (1,1) to (1,0);
		\draw (1,1) to (2,0);
		\draw (3,0) to (4,1) to (4,0);
		\draw (4,1) to (5,0);
		\draw (6,0) to (6.5,1) to (7,0);

		\draw (2,0) to (2.5,-1) to (3,0);
		\draw (2,-2) to (2.5,-1) to (2.5,-2);
		\draw (2.5,-1) to (3,-2);
		\node at (2.5,-1) [circle, fill, draw, outer sep=0pt, inner sep=3 pt] {};
		
		\node at (0,-2) [circle,fill,inner sep=1.5pt] {};
		\node at (2,-2) [circle,fill,inner sep=1.5pt] {};
		\node at (3,-2) [circle,fill,inner sep=1.5pt] {};
		\node at (2.5,-2) [circle,fill,inner sep=1.5pt] {};
	
		\draw (0,0) to (0,-2);
		\draw (5,0) to (5.5,-1);
		\draw (5.5,-1) to (6,0);
		\draw (7,0) to (7,-1);
		
		\node at (0,-1) [circle, fill, draw, outer sep=0pt, inner sep=3 pt] {};
		\node at (5.5,-1) [circle, fill, draw, outer sep=0pt, inner sep=3 pt] {};
		\node at (7,-1) [circle, fill, draw, outer sep=0pt, inner sep=3 pt] {};

        \draw (1,0) to (2.5,-1) to (4,0);

        \node at (1,1.1) {$g_1$};
        \node at (4,1.1) {$g_2$};
        \node at (6.5,1.1) {$g_3$};
        \node at (0.8, -1) {$\phi^{1,1,g_4}$};
        \node at (3.2, -1) {$\phi^{4,3,g_5}$};
        \node at (5.7, -1.4) {$\phi^{2,0,g_6}$};
        \node at (7.2, -1.4) {$\phi^{1,0,g_7}$};
		\end{tikzpicture}
	\end{center}
    \caption{A component of $\widehat{\phi}$ with input from $\hbar^{g_1}S^3V\odot \hbar^{g_2}S^3V \odot \hbar^{g_3}S^2V$ and output in $\hbar^{2+\sum g_i}S^4V'$}
\end{figure}

\begin{definition}
	The family of operators is an $IBL_\infty$ morphism from $(V,p^{k,l,g})$ to $(V',q^{k,l,g})$ iff $\widehat{q}\circ \widehat{\phi}=\widehat{\phi}\circ \widehat{p}$ and $|\widehat{\phi}|=0$.
\end{definition}

Then we have a trivial $IBL_\infty$ algebra $\mathbf{0}:=\{0\}$ with $p^{k,l,g}=0$. $\mathbf{0}$ is an initial object in the category of $IBL_\infty$ algebras with $\phi^{k,l,g}=0$. Then an $IBL_\infty$ augmentation of $V$ is an $IBL_\infty$ morphism from $V$ to $\mathbf{0}$. $IBL_\infty$ augmentations may not always exist. One obstruction is torsion. Unlike the torsions for $BL_\infty$ algebras, there are many more torsions for $IBL_\infty$ algebras. Let 
\begin{equation}
E^{k}V[[\hbar]]:=\overline{B}^kSV[[\hbar]].    
\end{equation}

\begin{definition}
	For $n,m\ge 0$, we say $V$ has an $(n,m)$ torsion if $[\hbar^{n}]=0\in H_*(E^{m+1}V[[\hbar]])$.	
\end{definition}
Then the algebraic torsion in \cite{LW} is the $(n,0)$ torsion of the $IBL_\infty$ algebra associated to a contact manifold by SFT. 

\begin{proposition}
    If $V$ has an $(n,m)$ torsion, then $V$ has no $IBL_\infty$ augmentation.
\end{proposition}
\begin{proof}
    If $V$ has an $IBL_\infty$ augmentation $\epsilon$, then we have chain maps $E^{m+1}\mathbf{0}[[\hbar]]\to E^{m+1}V[[\hbar]]\stackrel{\widehat{\epsilon}}{\to}E^{m+1}\mathbf{0}[[\hbar]]$, whose composition is the identity map. Hence $\widehat{\epsilon}$ sends $[\hbar^n]\in H_*(E^{m+1}V[[\hbar]])$ to $[\hbar^n]\ne 0\in H_*(E^{m+1}\mathbf{0}[[\hbar]])$, which contradicts the torsion assumption. 
\end{proof}

\begin{proposition}\label{prop:relation}
	If $V$ has an $(n,m)$ torsion, then $V$ has an $(n+m,0)$ torsion.
\end{proposition}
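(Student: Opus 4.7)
The plan is to construct, directly from the given $x \in E^{m+1}V[[\hbar]]$ with $\widehat p(x)=\hbar^n\cdot 1_V$, an explicit element $z\in E^1V[[\hbar]]=SV[[\hbar]]$ satisfying $\widehat p(z)=\hbar^{n+m}\cdot 1_V$, which immediately realizes the $(n+m,0)$-torsion. Decompose $x=\sum_{k=1}^{m+1}x^{(k)}$ by sentence length with $x^{(k)}\in S^k(SV)[[\hbar]]$, and let $\mu:S(SV)\to SV$ denote the graded-commutative algebra homomorphism collapsing the outer symmetric product $\odot$ onto the inner one; concretely $\mu(w_1\odot\cdots\odot w_k)=w_1\ast\cdots\ast w_k$. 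The candidate is
$$
z\;:=\;\sum_{k=1}^{m+1}\hbar^{m+1-k}\,\mu(x^{(k)})\;\in\; SV[[\hbar]].
$$

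The crux of the argument is the following \emph{$\hbar$-weighted Leibniz identity}: for any $s\in S^k(SV)[[\hbar]]$,
$$
\widehat p(\mu(s))\;=\;\sum_{l=1}^{k}\hbar^{k-l}\,\mu\bigl(\widehat p(s)|_{S^l(SV)[[\hbar]]}\bigr).
$$
I would prove this directly from the graph description of $\widehat p$ in Definition~\ref{def:curved_IBL'}. For any IBL-component $p^{j,l',g'}$ and any distribution $(j_1,\ldots,j_k)$ of its $j$ inputs over the trees $w_1,\ldots,w_k$, the resulting output word (after collapsing via $\mu$ in the forest case) is the same whether one glues $p^{j,l',g'}$ to the flattened single tree $\mu(s)=w_1\cdots w_k$ or to the forest $s=w_1\odot\cdots\odot w_k$; only the cycle count differs. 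The flat-word gluing always produces $j-1$ independent cycles (all $j$ inputs lie in one tree), whereas the forest gluing produces $j-t$ cycles, where $t$ is the number of distinct trees hit; moreover, the output sentence in the forest case has word length $l=k-t+1$. Hence the $\hbar$-powers differ by exactly $t-1=k-l$, producing the weight $\hbar^{k-l}$ on the length-$l$ component of $\mu(\widehat p(s))$.

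Granting the identity, a direct computation interchanging summation and using $\widehat p(x^{(k)})|_{S^l(SV)[[\hbar]]}=0$ for $k<l$ yields
$$
\widehat p(z)\;=\;\sum_{l=1}^{m+1}\hbar^{m+1-l}\,\mu\bigl(\widehat p(x)|_{S^l(SV)[[\hbar]]}\bigr).
$$
Since $\widehat p(x)=\hbar^n\cdot 1_V$ is concentrated in word length $l=1$ and $\mu$ restricts to the identity on $S^1(SV)=SV$, only the $l=1$ summand survives, yielding $\widehat p(z)=\hbar^m\cdot\hbar^n\cdot 1_V=\hbar^{n+m}\cdot 1_V$ as desired.

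The main obstacle will be the careful verification of the $\hbar$-weighted Leibniz identity. While the graph-theoretic bookkeeping of cycles and output word lengths is conceptually transparent, one must correctly track the Koszul--Quillen signs that arise when passing between tensor representatives in $(SV)^{\otimes k}$ and symmetrized elements in $S^k(SV)$, and verify that these signs match on both sides (so that the term-by-term identification between the two sides of the identity is genuinely a sign-preserving bijection). Once the sign analysis is in place, the remainder reduces to the formal manipulations above.
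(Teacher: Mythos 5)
Your proof is correct and is essentially the paper's own argument in different clothing: your element $z=\sum_k\hbar^{m+1-k}\mu(x^{(k)})$ is exactly $\hbar^{m}C_{m+1}(x)$, where $C_{m+1}=\iota+\sum_{i\ge 2}c_i$ is the paper's ``connect all trees, weighted by $\hbar^{-k+1}$'' map, and your $\hbar$-weighted Leibniz identity is precisely the chain-map property $C_{m+1}\circ\widehat{p}=\widehat{p}\circ C_{m+1}$, established by the same comparison of cycle counts ($j-1$ for the flattened word versus $j-t$ for the forest, with the discrepancy $t-1=k-l$ absorbed into the $\hbar$-weights). The only difference is cosmetic: you clear the negative powers of $\hbar$ up front rather than formally adjoining $\hbar^{-k}$ as the paper does.
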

\begin{proof}
	We use $\hbar^{-k}SV[[\hbar]]$ to denote $SV[[\hbar]]\otimes_{\bk[[\hbar]]} \hbar^{-k}\bk[[\hbar]]$, where $\hbar^{-k}\bk[[\hbar]]$ is the $\bk[[\hbar]]$ module generated by $\hbar^{-k}$. Then we have a map $c_k:S^k SV[[\hbar]]\to \hbar^{-k+1}SV[[\hbar]]$ for $k\ge 2$ defined by
	$$w_1\odot \ldots \odot w_k \mapsto \hbar^{-k+1}w_1\ldots w_k.$$
	Pictorially, $c_k$ is simply connecting all trees compensated with $\hbar^{-k+1}$. Let $\iota$ be the inclusion $SV[[\hbar]]\to \hbar^{-m+1}SV[[\hbar]]$. We claim that
	$$C_{m}:=\iota+\sum_{i=2}^m c_i$$
	defines a chain map from $E^mV[[\hbar]]$ to $\hbar^{-m+1}SV[[\hbar]]$ for $m\ge 2$. 

    To prove the claim, we will show that $C_m\circ \widehat{p} = \widehat{p}\circ C_m$ on $S^iSV[[\hbar]]$ for each $1\le i \le m$. Using the graph description, $\widehat{p}$ on $S^iSV[[\hbar]]$ is decomposed with respect to the subset $I$ of $\{1,\ldots,i\}$ such that the nontrivial component of $\widehat{p}$ is glued to clusters indexed by $I$. Let $\widehat{p}_j$ denote the component such that the subset has $j$ elements, then $\widehat{p}|_{S^iSV[[\hbar]]}=\sum_{j=1}^i \widehat{p}_j$.  The graph description of $C_m\circ \widehat{p}_j$ is simply connecting all components of $\widehat{p}_j$ multiplied by $\hbar^{j-i}$. On the other hand, in the graph description of $\widehat{p}\circ C_m|_{S^iSV[[\hbar]]}$, we have the same graph. The order of $\hbar$ (in addition to the genus associated to the graph of $\widehat{p}_j$) is $\hbar^{-i+1}$ from $C_m$ on $S^iSV[[\hbar]]$ multiplied by $\hbar^{j-1}$ from the extra genus when we glue $\widehat{p}_j$ to the one combined tree compared to the original $i$ trees. In particular, we have $C_m\circ \widehat{p} = \widehat{p}\circ C_m$ on $S^iSV[[\hbar]]$. Hence the claim follows.
    
    Since $V$ admits an $(n,m)$ torsion, we have that $\widehat{p}(x)=\hbar^n$ for some $x\in E^{m+1}V[[\hbar]]$. Since $C_{m+1}|_{SV[[\hbar]]}=\iota$, in particular, we have $C_{m+1}(\hbar^{n})=\hbar^n$, we know that $\widehat{p}\circ C_{m+1}(x)=\hbar^n$. Then $\widehat{p}(\hbar^m C_{m+1}(x))=\hbar^{n+m}$ and $\hbar^m C_{m+1}(x)\in SV[[\hbar]]$, i.e.\ $V$ carries an $(n+m,0)$-torsion.
\end{proof}

In contrast with \Cref{prop:relation}, the torsion for the associated $BL_\infty$ algebra is not necessarily an $(0,m)$ torsion unless $m=0$ \cite{bourgeois2010towards}\footnote{\cite[Theorem 1]{bourgeois2010towards} showed that the vanishing of contact homology is equivalent to the $0$-algebraic torsion in \cite{LW}, i.e.\ $(0,0)$-torsion here.}. We use $p_0$ to denote the genus $0$ part of $p$, i.e.\ the induced $BL_\infty$ algebra on $V$. Indeed if $1=\widehat{p_0}(x)$ for $x\in E^{m+1}V$, then we only have $\widehat{p}(x)=1+O(\hbar)$, where $O(\hbar)\in \ker \pi_0$ with $O(\hbar)$ potentially coming from holomorphic curves with higher genera. On the other hand, as we will see later, an $(0,m)$ torsion always implies an $m$ torsion for the $BL_\infty$ algebra. To explain the relations, we will explain the following procedure, which increases genus one at a time.

Let $(V,p^{k,l,g})$ be an $IBL_{\infty}$ algebra. We define 
$$EV[[\hbar]]_m:= EV[[\hbar]]\otimes_{\bk[[\hbar]]}(\bk[[\hbar]]/( \hbar^{m+1})),$$ 
where $(\hbar^{m+1})$ is the ideal of $\bk[[\hbar]]$ generated by $\hbar^{m+1}$. Then we consider the projection $\pi_m:EV[[\hbar]] \to EV[[\hbar]]_m$. Then we have a filtration $$EV[[\hbar]]:=\ker \pi_{-1}\supset\ker \pi_0\supset \ker \pi_1\supset \ldots$$ We define the $\hbar$-width $w_{\hbar}(v)\in \N\cup \{\infty\}$ for $v\in EV[[\hbar]]$ to be 
$$w_{\hbar}(v):=\sup\left\{ m|v\in \ker \pi_{m} \right\}+1.$$
We can view $\bk[[\hbar]]/(\hbar^{m+1})$ as polynomials of $\hbar$ of degree at most $m$; in particular, $EV[[\hbar]]_m$ can be viewed as a subspace of $EV[[\hbar]]$. Then for $x\in EV[[\hbar]]$, we have $x=\pi_m x+ x_1$ for $x_1\in \ker \pi_m$. It is straightforward to check that $\pi_m\circ \widehat{p}(x_1)=0$. As a consequence, we have $\pi_m\circ \widehat{p}\circ \pi_m=\pi_m \circ \widehat{p}$. Then we have $w_{\hbar}(\widehat{p}(v))\ge w_{\hbar}(v)$ for all $v\in EV[[\hbar]]$. As a consequence, we have the following commutative diagram:
\begin{equation}\label{eqn:pi_m}
\xymatrix{
	EV[[\hbar]]\ar[rrr]^{\widehat{p}}\ar[d]^{\pi_m} & & & EV[[\hbar]]\ar[d]^{\pi_m}\\
	EV[[\hbar]]_m \ar[rrr]^{\widehat{p}_m:=\pi_m\circ \widehat{p}|_{EV[[\hbar]]_m}} & & & EV[[\hbar]]_m }
\end{equation}
And we have $\widehat{p}_m\circ \widehat{p}_m=0$. If we unwrap the definition, then we know that $\widehat{p}_m$ uses $p^{k,l,g}$ for $g\le m$. Take $m=1$ as an example: $\widehat{p}_1$ uses both $p^{k,l,1}$ and $p^{k,l,0}$. However, to get an output in $EV[[\hbar]]_1$, the $k$ inputs of $p^{k,l,1}$ must glue to $k$ trees while at most $2$ of the $k$ inputs of $p^{k,l,0}$ can glue to the same tree. We can similarly define $E^{k}V[[\hbar]]_m$, and we have analogous descriptions.

\begin{definition}
	We say $(V,p^{k,l,g})$ has an $(n,m)_k$ torsion iff $[\hbar^n]=0\in H^*(E^{m+1}V[[\hbar]]_k)$.
\end{definition}
Then by definition, we always have an $(n,m)_k$ torsion for $n>k$. Moreover, an $(0,m)_0$ torsion is the $m$-torsion for $BL_\infty$ algebras. The $(n,m)$ torsion can be viewed as an $(n,m)_\infty$ torsion. We summarize the basic properties of these torsions in the following.

\begin{proposition}\label{prop:at}
	The torsions have the following properties.
	\begin{enumerate}
		\item\label{1} If $V$ has an $(n,m)_k$ torsion then $V$ has an $(n,m)_{k-1}$, $(n+1,m)_{k}$ and $(n,m+1)_{k}$ torsions. In particular if $V$ has an $(n,m)$ torsion, then $V$ has an $(n,m)_{k}$ torsion for any $k\ge 0$.
		\item\label{2} If $V$ has an $(n,m)_k$ torsion, then $V$ has an $(n+m,0)_k$ torsion. 
	\end{enumerate}
\end{proposition}
\begin{proof}
 For \eqref{1}, that $V$ carries an $(n,m)_{k-1}$ torsion follows from \Cref{eqn:pi_m}. The existence of the other two torsions follows from the definition. \eqref{2} follows from the $EV[[\hbar]]_k$ version of Proposition \ref{prop:relation}.
\end{proof}
As a corollary, the existence of an $(0,k)$ torsion implies both $k$-algebraic torsion and $k$-algebraic planar torsion. Contact manifolds in \cite[Theorem 3.16 and 3.17]{MZ22} actually have an $(0,k)$ torsion by the same argument as in \cite{LW}. Therefore, Proposition \ref{prop:at} implies that both algebraic torsion and algebraic planar torsion are finite and bounded above by $k$. Roughly speaking, different torsions measure the counting of holomorphic curves whose domains have different topological types. For an $(k,0)$ torsion, we can have higher genus curves without negative punctures contributing to the torsion, while counts of all higher genus curves with the same positive asymptotics and non-empty negative punctures must sum up to zero. For an $(0,k)$ torsion, we can only have rational curves without negative punctures contributing to the torsion, while all higher genus curves with the same positive asymptotics and non-empty negative punctures must sum up to zero. For an $(0,k)_0$ torsion, we can only have rational curves without negative punctures contributing to the torsion, and all rational curves with the same positive asymptotics and non-empty negative punctures must sum up to zero.

Let $2^{\N^3}$ denote the category of subsets of $\N^2\times(\N\cup \{\infty\})$, where the arrow from $V$ to $W$ is an inclusion $W\subset V$. $2^{\N^3}$ is a monoidal category where the monoidal structure is given by taking the union. We use $\SFT(Y)$ to denote the full SFT as an $IBL_\infty$ algebra in the sense of Definition \ref{def:curved_IBL'} for a contact manifold $Y$.
\begin{theorem}\label{thm:torsion}	
	Let $Y$ be a contact manifold, we define $\mathrm{T}(V):=\{(m,n,k)|\SFT(Y) \text{ has an } (m,n)_k \text{ torsion}\}$. Then $\mathrm{T}:\cont \to 2^{\N^3}$ is a covariant monoidal functor.
\end{theorem}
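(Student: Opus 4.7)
The plan is to verify three things: well-definedness of $T$ on objects, functoriality on morphisms, and compatibility with the monoidal structures. Throughout, I assume the higher-genus analog of \Cref{thm:vfc}, so that $\SFT(Y)$ is a well-defined cocurved $IBL_\infty$ algebra up to $IBL_\infty$ homotopy, and that exact cobordisms induce $IBL_\infty$ morphisms functorial up to homotopy. Since $T(Y)$ is defined in terms of the homology $H_*(E^{m+1}V[[\hbar]]_k,\widehat p)$ together with the distinguished class $[\hbar^n]$ (the image of the canonical element under $\mathbf 0\to V$), and both are preserved by homotopy equivalence, $T(Y)$ is a contact invariant.

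For functoriality, let $X$ be an exact cobordism from $Y_-$ to $Y_+$, inducing an $IBL_\infty$ morphism $\phi:\SFT(Y_+)\to\SFT(Y_-)$. The assembled map $\widehat\phi$ has three properties crucial for the argument: (i) it preserves the sentence length filtration $E^{m+1}$, because the graph-theoretic construction of $\widehat\phi$ never increases the number of trees in a forest; (ii) it respects the $\hbar$-degree, so it descends to a chain map on each quotient $EV[[\hbar]]_k$; and (iii) it sends $\hbar^n$ to $\hbar^n$, because the composite $\mathbf 0\to\SFT(Y_+)\to\SFT(Y_-)$ is the trivial morphism. Hence if $(n,m,k)\in T(Y_+)$ is witnessed by $\widehat p_+(x)=\hbar^n$ with $x\in E^{m+1}V_+[[\hbar]]_k$, then $\widehat\phi(x)\in E^{m+1}V_-[[\hbar]]_k$ satisfies $\widehat p_-\widehat\phi(x)=\widehat\phi(\hbar^n)=\hbar^n$, so $(n,m,k)\in T(Y_-)$. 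This gives $T(Y_+)\subset T(Y_-)$, which is the required covariant arrow $T(Y_-)\to T(Y_+)$ in $2^{\N^3}$. Composition and identities pass through the analogous properties of the $IBL_\infty$ morphism construction, which hold up to homotopy and therefore at the level of homology.

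For monoidality, the key observation is that connected holomorphic curves in the symplectization $\R\times(Y\sqcup Y')$ stay in a single component, so the $IBL_\infty$ structure on $V_{Y\sqcup Y'}=V_Y\oplus V_{Y'}$ vanishes on mixed-letter inputs and restricts to $p_Y$ or $p_{Y'}$ on pure inputs. Consequently, the linear inclusion $\iota:V_Y\hookrightarrow V_{Y\sqcup Y'}$ extended by $\iota^{k,l,g}=0$ for $(k,l,g)\ne(1,1,0)$ is an $IBL_\infty$ morphism. Applying the functoriality argument yields $T(Y)\subset T(Y\sqcup Y')$, and symmetrically $T(Y')\subset T(Y\sqcup Y')$, hence $T(Y)\cup T(Y')\subset T(Y\sqcup Y')$. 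For the reverse inclusion, one uses the tensor decomposition $SV_{Y\sqcup Y'}=SV_Y\otimes SV_{Y'}$ and the splitting of $\widehat p$ as a sum $\widehat p_Y+\widehat p_{Y'}$ of anticommuting differentials: via the induced K\"unneth-type structure on $\overline{S}(SV_Y\otimes SV_{Y'})$, any witness of $\widehat p(z)=\hbar^n$ reorganizes into contributions from one tensor factor at a time, and the primitive nature of $\hbar^n$ forces at least one factor to carry the torsion. The unit axiom $T(\emptyset)=\emptyset$ is immediate from $\SFT(\emptyset)=\mathbf 0$.

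The main obstacle is analytic: every step depends on the full-SFT $IBL_\infty$ formalism being established, including functoriality under exact cobordisms and behavior under disjoint unions, all of which require technically demanding transversality and coherent orientation results for moduli spaces of higher-genus holomorphic curves that remain in progress in the literature. Conditional on this foundation, the algebraic content of the proof is a direct higher-genus adaptation of the proof of \Cref{thm:strong_torsion}.
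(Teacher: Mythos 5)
The paper itself never writes out a proof of \Cref{thm:torsion}: the theorem is followed only by a remark on the analytic foundations, and the intended argument is the one implicit in the surrounding algebra (the grid of torsions, the quotients $EV[[\hbar]]_k$ and diagram \eqref{eqn:pi_m}, and the higher-genus analogue of \Cref{thm:vfc}). Your functoriality half reproduces exactly that intended argument: an $IBL_\infty$ morphism $\widehat{\phi}$ does not increase the number of trees (so it preserves $E^{m+1}$), can only raise the $\hbar$-degree (so it descends to $\widehat{\phi}_k$ on $EV[[\hbar]]_k$ and commutes with $\widehat{p}_k=\pi_k\circ\widehat{p}$ by \eqref{eqn:pi_m}), and fixes $\hbar^n$ at the chain level by the rule $\widehat{\phi}(1\odot\cdots\odot 1)=1\odot\cdots\odot 1$. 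This is the correct higher-genus version of the monotonicity of $\TT$ under $BL_\infty$ morphisms, and your explicit acknowledgement that everything is conditional on the full-SFT foundations matches the paper's own caveat.

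The gap is in the monoidality. The easy inclusion $\mathrm{T}(Y)\cup\mathrm{T}(Y')\subseteq\mathrm{T}(Y\sqcup Y')$ via the strict inclusion morphism $V_Y\hookrightarrow V_{Y\sqcup Y'}$ is fine. But your reverse inclusion is not an argument as written. First, $E^{m+1}(V_Y\oplus V_{Y'})$ does not decompose compatibly with the sentence-length filtration: a single sentence in $S(V_Y\oplus V_{Y'})=SV_Y\otimes SV_{Y'}$ may mix letters from both components, so $\overline{S}(SV_Y\otimes SV_{Y'})$ is not $\overline{S}SV_Y\otimes\overline{S}SV_{Y'}$ and there is no immediate K\"unneth splitting of the complex $(E^{m+1}(V_Y\oplus V_{Y'})[[\hbar]]_k,\widehat{p}_k)$. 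Second, the obvious fix of projecting onto one factor fails: the projection $V_Y\oplus V_{Y'}\to V_Y$ extended by zero is \emph{not} an $IBL_\infty$ morphism when the cocurvature $p^{k,0,g}$ of $V_{Y'}$ is nonzero (it kills a letter $v\in V_{Y'}$ but not the constant $p^{k,0,g}_{Y'}(v\cdots)\in\bk$), so one cannot transport a witness back to a single factor by functoriality. Third, even on the $E^1$ level the relevant K\"unneth argument is over $\bk[[\hbar]]$ or $\bk[\hbar]/\hbar^{k+1}$, not over a field, so "the primitive nature of $\hbar^n$" does not by itself localize the torsion in one factor. If the theorem is read as asserting only the (op)lax coherence morphism in the thin category $2^{\N^3}$, your easy inclusion already suffices; if it is read as strong monoidality (equality of sets), the reverse inclusion needs a genuine argument, e.g.\ a filtration or spectral-sequence comparison adapted to the mixed sentences, which you have not supplied.
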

\begin{remark}\label{rmk:glue}
	For the proof of Theorem \ref{thm:torsion}, we only need to construct $\SFT(Y)$ to the same extent as \cite[Theorem 3.10]{MZ22}. 
    That is, upon fixing some extra choices, we can define the virtual counts of moduli spaces and define $IBL_\infty$ algebras of a contact manifold and $IBL_\infty$ morphisms of an exact cobordism. But there is no need to establish the homotopy invariance of those structures with respect to different choices made in the construction.
\end{remark}
\begin{remark}[Analog of planarity]
    Given an $IBL_\infty$ augmentation $\epsilon$, then we have similar constructions of linearization to construct another $IBL_\infty$ structure $p^{k,l,g}_{\epsilon}$ such that $p^{k,l,g}_{\epsilon}=0$ whenever $l=0$. As a consequence, we arrive at an $IBL_\infty$ structure in the sense of Definition \ref{def:IBL}. Then we can introduce the analog of pointed morphisms and the analog of orders in the context of $IBL_\infty$ algebras, which in the SFT case considers holomorphic curves passing through a fixed point in the symplectization. Moreover, we will have a grid of orders as in the case of torsions above. However, it is a much harder task to find examples with holomorphic curves of higher genera. 
\end{remark}

\bibliographystyle{plain} 
\bibliography{ref}
\Addresses
\end{document}